\newtheorem{theorem}{Theorem}[section]
\newtheorem{lemma}[theorem]{Lemma}
\newtheorem{proposition}[theorem]{Proposition}
\newtheorem{corollary}[theorem]{Corollary}
\theoremstyle{definition}
\theoremstyle{remark}
\numberwithin{equation}{section}
\def\ds{\displaystyle}
\def\R{\mathbb R}
\def\C{\mathbb C}
\def\Z{\mathbb Z}
\def\N{\mathbb N}
\def\S{\mathscr S}
\def\supp{\text{supp}}
\def\({\left(}
\def\){\right)}
\def\[{\left[}
\def\]{\right]}
\def\<{\left<}
\def\>{\right>}
\def\less{\lesssim}
\begin{document}

\title{Hardy Space Estimates for Littlewood-Paley-Stein Square Functions and Calder\'on-Zygmund Operators}

%    Information for first author
\author{Jarod Hart}
\address{Department of Mathematics\\ Wayne State University\\ Detroit, MI 48202}
\email{jarod.hart@wayne.edu}
\author{Guozhen Lu}
\address{Department of Mathematics\\ Wayne State University\\ Detroit, MI 48202}
\email{gzlu@wayne.edu}

\thanks{Hart was partially supported by an AMS-Simons Travel Grant.  Lu was supported by NSF grant \#DMS1301595   .}

%    Information for second author
%\author{Author Two}
%\address{Mathematical Research Section, School of Mathematical Sciences,
%Australian National University, Canberra ACT 2601, Australia}
%\email{two@maths.univ.edu.au}
%\thanks{Support information for the second author.}

%    General info
\subjclass[2010]{42B20, 42B25, 42B30}

\date{\today}

\dedicatory{ }

\keywords{Square Function, Littlewood-Paley-Stein, Bilinear, Calder\'on-Zygmund Operators}

\begin{abstract}
In this work, we give new sufficient conditions for Littlewood-Paley-Stein square function and necessary and sufficient conditions for a Calder\'on-Zygmund operator to be bounded on Hardy spaces $H^p$ with indices smaller than $1$.  New Carleson measure type conditions are defined for Littlewood-Paley-Stein operators, and the authors show that they are sufficient for the associated square function to be bounded from $H^p$ into $L^p$.  New polynomial growth $BMO$ conditions are also introduced for Calder\'on-Zygmund operators.  These results are applied to prove that Bony paraproducts can be constructed such that they are bounded on Hardy spaces with exponents ranging all the way down to zero.

\end{abstract}

\maketitle

\section{Introduction}

The purpose of this work is to prove new Hardy space $H^p(\R^n)$ bounds for Littlewood-Paley-Stein square functions and Calder\'on-Zygmund integral operators where the index $p$ is allowed to be small.  Part of the novelty of the work here is that it draws an explicit connection between Calder\'on-Zygmund operators and Littlewood-Paley-Stein square functions.    %Hardy space estimates for singular integrals and square functions are important in a number of other areas of analysis, including the study of sub-harmonic functions and well posedness problems in PDE.  See for example \cite{SW,CZ1,FS2} for more information on the relevance of these estimates.

It is well known by now that one way to define the real Hardy spaces $H^p$ for $0<p<\infty$ is by using certain convolution-type Littlewood-Paley-Stein square functions.  This has been explored by many mathematicians; some of the fundamental developments of this idea can be found in the work of Stein \cite{St1,St2} and Fefferman and Stein \cite{FS2}.  In particular, Fefferman and Stein proved that one can define $H^p=H^p(\R^n)$ using square functions of the form
\begin{align*}
S_Qf(x)=\(\sum_{k\in\Z}|Q_kf(x)|^2\)^\frac{1}{2},
\end{align*}
associated to integral operators $Q_kf=\psi_k*f$ for an appropriate choice of Schwartz function $\psi\in\S$, where $\psi_k(x)=2^{kn}\psi(2^kx)$.  There are also results in the direction of determining the most general classes of such convolution operators that can be used to define Hardy spaces, or more generally Triebel-Lizorkin spaces; see for example the work of Bui, Paluszy\'nski, and Taibelson \cite{BPT1,BPT2}.  Generalized classes of non-convolution type Littlewood-Paley-Stein square function operators were studied, for example, in \cite{DJ,DJS,Se}.  Although all of the bounds in these articles are relegated to Lebesgue spaces with index $p\in(1,\infty)$, which for this range of indices coincide with Hardy spaces.  In the current work, we consider a general class of non-convolution type Littlewood-Paley-Stein square function operators acting on Hardy spaces with indices smaller than $1$.

Before we state our Hardy space estimates for Littlewood-Paley-Stein square functions, we define our classes of Littlewood-Paley-Stein square function operators.  Given kernel functions $\lambda_k:\R^{2n}\rightarrow\C$ for $k\in\Z$, define
\begin{align*}
\Lambda_kf(x)=\int_{\R^{n}}\lambda_k(x,y)f(y)dy
\end{align*}
for appropriate functions $f:\R^n\rightarrow\C$.  Define the square function associated to $\{\Lambda_k\}$ by
\begin{align*}
S_\Lambda f(x)=\(\sum_{k\in\Z}|\Lambda_kf(x)|^2\)^\frac{1}{2}.
\end{align*}
We say that a collection of operators $\Lambda_k$ for $k\in\Z$ is a collection of Littlewood-Paley-Stein operators with decay $N$ and smoothness $L+\delta$, written $\{\Lambda_k\}\in LPSO(N,L+\delta)$, for $N>0$, an integer $L\geq0$ and $0<\delta\leq1$, if there exists a constant $C$ such that
\begin{align}
&|\lambda_k(x,y)|\leq C\,\Phi_k^{N}(x-y)\label{sqker1}\\
&|D_1^\alpha \lambda_k(x,y)|\leq C2^{|\alpha|k}\Phi_k^{N}(x-y)\text{ for all }|\alpha|=\alpha_1+\cdots+\alpha_n\leq L\label{sqker2}\\
&|D_1^\alpha \lambda_k(x,y)-D_1^\alpha \lambda_k(x,y')|\leq C|y-y'|^\delta \,2^{k(L+\delta)}\(\Phi_k^{N}(x-y)+\Phi_k^{N}(x-y')\)\text{ for all $|\alpha|=L$.}\label{sqker3}
\end{align}
Here we use the notation $\Phi_k^N(x)=2^{kn}(1+2^k|x|)^{-N}$ for $N>0$, $x\in\R^n$, and $k\in\Z$.  We also use the notation $D_0^\alpha F(x,y)=\partial_x^\alpha F(x,y)$ and $D_1^\alpha F(x,y)=\partial_y^\alpha F(x,y)$ for $F:\R^{2n}\rightarrow\C$ and $\alpha\in\N_0^n$.  It can easily be shown that $LPSO(N,L+\delta)\subset LPSO(N',L+\delta')$ for all $0<\delta'\leq\delta\leq1$ and $0<N'\leq N$.

Our goal in studying square functions of the form $S_\Lambda$ is to prove boundedness properties from $H^p$ into $L^p$.  Note that it is not reasonable to expect $S_\Lambda$ to be bounded from $H^p$ into $H^p$ when $0<p\leq1$ since $S_\Lambda f\geq0$.  It is also not hard to see that the condition $\{\Lambda_k\}\in LPSO(N,L+\delta)$ alone, for any $N>0$, $L\geq0$, and $0<\delta\leq1$, is not sufficient to guarantee that $S_\Lambda$ to be bounded from $H^p$ into $L^p$ for any $0<p<\infty$.  In fact, this is not true even in the convolution setting.  This can be seen by taking $\lambda_k(x,y)=\varphi_k(x-y)$ for some $\varphi\in \S$ with non-zero integral, where $\varphi_k(x)=2^{kn}\varphi(2^kx)$.  The square function $S_\Lambda$ associated to this convolution operator is not bounded from $H^p$ into $L^p$ for any $0<p<\infty$.  Hence some additional conditions are required for $\Lambda_k$ in order to assure $H^p$ to $L^p$ bounds.  For $1<p<\infty$, this problem was solved in terms of Carleson measure conditions on $\Lambda_k1(x)$; see for example \cite{C,J,CJ,Se}.  We give sufficient conditions for such bounds when the index $p$ is allowed to range smaller than $1$.  The additional cancellation conditions we impose on $\Lambda_k$ involve generalized moments for non-concolution operators $\Lambda_k$.  Define the moment function $[[\Lambda_k]]_\beta(x)$ by the following.  Given $\{\Lambda_k\}\in LPSO(N,L+\delta)$ and $\alpha\in\N_0^n$ with $|\alpha|<N-n$
\begin{align*}
\[\[\Lambda_k\]\]_{\alpha}(x)&=2^{k|\alpha|}\int_{\R^n}\lambda_k(x,y)(x-y)^\alpha dy
\end{align*}
for $k\in\Z$ and $x\in\R^n$.  It is worth noting that $[[\Lambda_k]]_0(x)=\Lambda_k1(x)$, which is a quantity that is closely related to $L^2$ bounds for $S_\Lambda$, see for example \cite{DJ,DJS,Se}.  We use these moment functions to provide sufficient conditions of $H^p$ to $L^p$ bounds for $S_\Lambda$ in the following theorem.

\begin{theorem}\label{t:sqbound}
Let $\{\Lambda_k\}\in LPSO(N,L+\delta)$, where  $N=n+2L+2\delta$ for some integer $L\geq0$ and $0<\delta\leq1$.  If
\begin{align}
d\mu_\alpha(x,t)=\sum_{k\in\Z}|[[\Lambda_k]]_{\alpha}(x)|^2\delta_{t=2^{-k}}\,dx\label{dmuCarleson}
\end{align}
is a Carleson measure for all $\alpha\in\N_0^n$ with $|\alpha|\leq L$, then $S_\Lambda$ can be extended to a bounded operator from $H^p$ into $L^p$ for all $\frac{n}{n+L+\delta}<p\leq1$.
\end{theorem}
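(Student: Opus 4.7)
The plan is to combine the atomic decomposition of $H^p$ with a splitting of $\Lambda_k$ into a ``vanishing-moment'' part plus paraproduct-type corrections, each of the latter controlled by one of the Carleson measures $d\mu_\alpha$.

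Since $p > n/(n+L+\delta)$ forces $\lfloor n(1/p-1)\rfloor \le L$, every $f\in H^p$ decomposes into $(p,\infty,L)$-atoms: atoms $a$ supported in some ball $B=B(x_0,r)$, bounded by $|B|^{-1/p}$, satisfying $\int a(y)(y-x_0)^\gamma\,dy = 0$ for $|\gamma|\le L$. It suffices to prove $\|S_\Lambda a\|_{L^p}^p \lesssim 1$ uniformly in such atoms. By elementary linear algebra choose $\phi^\beta \in \S$, $|\beta|\le L$, with $\int \phi^\beta(w)w^\alpha\,dw = \delta_{\alpha\beta}$, and set $\phi_k^\beta(z) = 2^{kn}\phi^\beta(2^k z)$. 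Define
\begin{align*}
\tilde{\lambda}_k(x,y) = \lambda_k(x,y) - \sum_{|\alpha|\le L}[[\Lambda_k]]_\alpha(x)\,\phi_k^\alpha(x-y).
\end{align*}
A direct computation using the definition of $[[\Lambda_k]]_\alpha$ gives $\int \tilde{\lambda}_k(x,y)(x-y)^\alpha\,dy = 0$ for all $|\alpha|\le L$, and the pointwise bound $|[[\Lambda_k]]_\alpha(x)| \lesssim 1$ (which follows from $N = n+2L+2\delta$) ensures $\tilde{\lambda}_k$ still satisfies LPSO-type size and smoothness estimates. Writing $S_\Lambda f(x) \le S_{\tilde{\Lambda}}f(x) + \sum_{|\alpha|\le L}\Pi_\alpha f(x)$, where $\Pi_\alpha f(x) = \bigl(\sum_k |[[\Lambda_k]]_\alpha(x)|^2\,|\phi_k^\alpha * f(x)|^2\bigr)^{1/2}$, reduces matters to bounding $S_{\tilde{\Lambda}}$ and each $\Pi_\alpha$ from $H^p$ into $L^p$.

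For $S_{\tilde{\Lambda}}$, the full vanishing moments of $\tilde{\lambda}_k$ permit the standard far/local splitting. On $(2B)^c$, Taylor-expand $\tilde{\lambda}_k(x,\cdot)$ about $x_0$ to order $L$ and apply the moments of $a$, reducing $\tilde{\Lambda}_k a(x)$ to the Taylor remainder integrated against $a$; the H\"older smoothness (\ref{sqker3}) yields $|\tilde{\Lambda}_k a(x)| \lesssim r^{L+\delta}\,2^{k(L+\delta)}\,\Phi_k^N(x-x_0)\,|B|^{1-1/p}$. Squaring, summing in $k$ (a geometric series peaking near $k \sim \log_2(1/r)$), and integrating $(S_{\tilde{\Lambda}}a)^p$ over $(2B)^c$ converges precisely because $p(n+L+\delta) > n$. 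On $2B$, H\"older's inequality together with the $L^2$-boundedness of $S_{\tilde{\Lambda}}$ (a standard almost-orthogonality argument using the vanishing moments of $\tilde{\lambda}_k$ in $y$ against the LPSO smoothness) and the atom bound $\|a\|_2 \le |B|^{1/2-1/p}$ deliver $\int_{2B}(S_{\tilde{\Lambda}}a)^p \lesssim 1$.

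Finally, each $\Pi_\alpha$ is a paraproduct-type square function whose $L^2$ boundedness is immediate from the hypothesis that $d\mu_\alpha$ is Carleson, combined with Carleson's lemma applied to $\phi_k^\alpha * f$. To push below $p=1$ one again uses atoms: the far estimate exploits the Schwartz decay of $\phi_k^\alpha$ and, for $|\alpha|\ge 1$, its mean-zero property together with the moments of $a$; the near estimate combines $L^2$ boundedness with H\"older. The main obstacle is precisely this last step --- the Hardy space paraproduct bound for small $p$ --- which requires careful molecular estimates and is where the full strength of the assumed Carleson conditions on every $d\mu_\alpha$ with $|\alpha|\le L$ (not only the classical $\alpha=0$ instance) is essential.
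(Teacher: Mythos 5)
Your argument is essentially correct, but it travels a genuinely different road from the paper's. The one-shot moment subtraction --- choosing a dual basis $\{\phi^\beta\}_{|\beta|\le L}\subset\S$ with $\int\phi^\beta(w)w^\alpha\,dw=\delta_{\alpha\beta}$ and setting $\tilde\lambda_k(x,y)=\lambda_k(x,y)-\sum_{|\alpha|\le L}[[\Lambda_k]]_\alpha(x)\phi_k^\alpha(x-y)$ --- kills all moments in one stroke, whereas the paper (Proposition \ref{p:paraproducts}) builds $\Lambda_k^{(L)}$ recursively, eliminating moments degree by degree through the operators $P_kD^\alpha$, precisely because $[[P_kD^\alpha]]_\beta=2^{k|\alpha|}M_{\alpha,\beta}$ is upper-triangular rather than diagonal. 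Your subtraction is arguably the cleaner device; both decompositions reduce the problem to a vanishing-moment square function plus Carleson paraproducts. The larger divergence is downstream: the paper never touches atoms. It controls the vanishing-moment piece via the discrete Calder\'on reproducing formula \eqref{discretereproducingformula} together with the Peetre-type maximal estimates (Propositions \ref{p:discretebound} and \ref{p:Hpmaximal}), and controls each paraproduct $\Pi_\alpha$ via Proposition \ref{p:HpCarleson}, a short level-set argument with the nontangential maximal function that works for all $0<p\le2$ and uses only the Carleson property of $d\mu_\alpha$. You instead run atomic decomposition with far/near splitting for both pieces, which also works and is more elementary in spirit, but two remarks on your closing paragraph: (i) the characterization of the paraproduct $H^p$ bound as ``the main obstacle'' requiring ``careful molecular estimates'' overstates the difficulty --- the far-field estimate for $\phi_k^\alpha*a$ follows from a Taylor expansion against the vanishing moments of $a$ and the Schwartz decay of $\phi^\alpha$, and the near-field estimate is H\"older plus the Carleson-implied $L^2$ bound, no molecular machinery needed; and (ii) the assertion that ``the full strength of the assumed Carleson conditions on every $d\mu_\alpha$ ... is essential'' to this step is misplaced: each fixed $\Pi_\alpha$ uses only $d\mu_\alpha$; it is the decomposition of $\Lambda_k$, not the bound for any single $\Pi_\alpha$, that consumes all $|\alpha|\le L$. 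Finally, your reduction ``it suffices to prove $\|S_\Lambda a\|_{L^p}^p\lesssim1$ uniformly in atoms'' is a known delicate point for sublinear operators; it is legitimate here because you establish $L^2$ boundedness en route, but it merits a sentence.
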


Here we say that a non-negative measure $d\mu(x,t)$ on $\R^{n+1}_+=\R^n\times(0,\infty)$ is a Carleson measure if there exists $C>0$ such that $d\mu(Q\times(0,\ell(Q)))\leq C|Q|$ for all cubes $Q\subset\R^n$, where $\ell(Q)$ denotes the sidelength of $Q$.  We only prove a sufficient condition here for boundedness of $S_\Lambda$ from $H^p$ into $L^p$, but it is reasonable to expect that the Carleson measure conditions in \eqref{dmuCarleson} are also necessary.  We hope to resolve this issue entirely with a full necessary and sufficient condition in future work.  We also provide a quick corollary of Theorem \ref{t:sqbound} to the type of operators studied in \cite{DJ,DJS,Se}, among others.

\begin{corollary}\label{c:sqfunction}
Let $\{\Lambda_k\}\in LPSO(n+2\delta,\delta)$ and $0<\delta\leq1$.  If $S_\Lambda$ is bounded on $L^2$, then $S_\Lambda$ extends to a bounded operator from $H^p$ into $L^p$ for all $\frac{n}{n+\delta}<p\leq1$.
\end{corollary}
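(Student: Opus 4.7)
The plan is to derive the corollary as a direct application of Theorem \ref{t:sqbound} with the parameter choice $L=0$. With $L=0$, the hypothesis $\{\Lambda_k\}\in LPSO(n+2\delta,\delta)$ matches the requirement $N = n + 2L + 2\delta$ in the theorem, and the range $\frac{n}{n+L+\delta}<p\leq 1$ becomes exactly $\frac{n}{n+\delta}<p\leq 1$. Moreover, the only multi-index $\alpha\in\N_0^n$ with $|\alpha|\leq L = 0$ is $\alpha = 0$, and $[[\Lambda_k]]_0(x) = \Lambda_k 1(x)$. Therefore everything reduces to verifying a single fact: if $S_\Lambda$ is bounded on $L^2$, then
\[
d\mu_0(x,t) = \sum_{k\in\Z}|\Lambda_k 1(x)|^2\,\delta_{t=2^{-k}}\,dx
\]
is a Carleson measure.

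To verify this, I would fix a cube $Q\subset\R^n$, let $k_Q\in\Z$ satisfy $2^{-k_Q}\sim\ell(Q)$, and let $Q^*$ be a fixed dilate of $Q$ (say $Q^* = 3Q$). Then split
\[
\Lambda_k 1(x) = \Lambda_k(\chi_{Q^*})(x) + \Lambda_k(\chi_{(Q^*)^c})(x)
\]
for $x\in Q$ and $k\geq k_Q$. For the local piece, the $L^2$ boundedness of $S_\Lambda$ gives
\[
\sum_{k\geq k_Q}\int_Q|\Lambda_k(\chi_{Q^*})(x)|^2\,dx \leq \|S_\Lambda(\chi_{Q^*})\|_{L^2}^2 \lesssim \|\chi_{Q^*}\|_{L^2}^2 \lesssim |Q|.
\]
For the far-away piece, I would use the pointwise size bound \eqref{sqker1} together with $N - n = 2\delta > 0$ to compute, for $x\in Q$ and $k\geq k_Q$,
\[
|\Lambda_k(\chi_{(Q^*)^c})(x)| \lesssim \int_{|y-x|\gtrsim \ell(Q)}\Phi_k^N(x-y)\,dy \lesssim (2^k\ell(Q))^{-2\delta}.
\]
Squaring and summing the geometric series in $k\geq k_Q$ gives a bound of $O(1)$, which after integration over $Q$ yields $O(|Q|)$. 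Combining the two pieces delivers the Carleson measure condition, after which Theorem \ref{t:sqbound} completes the proof.

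There is no real obstacle here, since both estimates are standard; the mild subtlety is just bookkeeping the exponent $N - n = 2\delta$ in the tail integral to ensure the geometric sum in $k$ converges, which is precisely why the choice $N = n + 2\delta$ in the $LPSO$ class is the right one to pair with smoothness $\delta$ via Theorem \ref{t:sqbound}.
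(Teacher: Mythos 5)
Your proposal is correct and follows essentially the same route as the paper: apply Theorem \ref{t:sqbound} with $L=0$ (so $N=n+2\delta$ and the range of $p$ becomes $\frac{n}{n+\delta}<p\leq 1$), and then verify that $L^2$-boundedness of $S_\Lambda$ forces $d\mu_0$ to be a Carleson measure. The only difference is cosmetic: the paper cites this last implication from the work of Carleson and Jones, whereas you supply the standard local/nonlocal splitting argument explicitly. Your tail estimate correctly exploits $N-n=2\delta>0$ so that the geometric series in $k\geq k_Q$ converges, and the local piece is exactly the $L^2$ hypothesis applied to $\chi_{Q^*}$, so the argument is complete.
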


Corollary \ref{c:sqfunction} easily follows from Theorem \ref{t:sqbound} and the following observation.  If $S_\Lambda$ is bounded on $L^2$, then $d\mu_0(x,t)$, as defined in \eqref{dmuCarleson} for $\alpha=0$, is a Carleson measure; see \cite{C,J} for proof of this observation.

Another purpose of this work is to prove a characterization of Hardy space bounds for Calder\'on-Zygmund operators.  Some of the earliest development of singular integral operators on Hardy spaces is due to Stein and Weiss \cite{SW}, Stein \cite{St2}, and Feffermand and Stein \cite{FS2}.  It was proved by Fefferman and Stein \cite{FS2} that if $T$ is a convolution-type singular integral operator that is bounded on $L^2$, then $T$ is bounded on $H^p$ for $p_0<p<\infty$ where $0\leq p_0<1$ depends on the regularity of the kernel of $T$.  This situation is considerably more complicated in the non-convolution setting, which can be observed in the $T1$ type theorems in \cite{DJ,T,FTW,FHJW,AM}.  In the 1980's David and Journ\'e proved the celebrated $T1$ theorem that provided necessary and sufficient conditions for Lebesgue space $L^p$ bounds for non-convolution Calder\'on-Zygmund operators when $1<p<\infty$, which coincides with the Hardy space bounds for this range of indices.  In \cite{T,FTW,FHJW}, the authors give sufficient $T1$ type conditions for a Calder\'on-Zygmund operator to be bounded on $H^p$ for $0<p\leq1$.  The conditions in \cite{T,FTW,FHJW} are too strong though, in the sense that they are not necessary for Hardy space bounds.  The fact that the conditions in \cite{T,FTW,FHJW} are not necessary can be seen by the full necessary and sufficient conditions provided in \cite{AM} when $p_0<p\leq1$, where $p_0=\frac{n}{n+\gamma}$ and $\gamma$ is a regularity parameter for the kernel of $T$.  This can also be seen by considering the Bony paraproduct, which we prove (in Theorem \ref{t:Bonyparaproduct}) is bounded on $H^p$ for $p_0<p\leq1$ and $p_0$ can be taken arbitrarily close to zero.  One of the main purposes of this article is to prove at full necessary and sufficient $T1$ type theorem for Calder\'on-Zygmund operators on Hardy spaces (Theorem \ref{t:CZbound}), thereby generalizing results pertaining to $H^p$ bounds from \cite{FS2,AM,T,FHJW,FTW}.

We say that a continuous linear operator $T$ from $\S$ into $\S'$ is a Calder\'on-Zygmund operator with smoothness $M+\gamma$, for any integer $M\geq0$ and $0<\gamma\leq1$, if $T$ has function kernel $K:\R^{2n}\backslash\{(x,x):x\in\R^n\}\rightarrow\C$ such that
\begin{align*}
\<Tf,g\>=\int_{\R^{2n}}K(x,y)f(y)g(x)dy\,dx
\end{align*}
whenever $f,g\in C_0^\infty=C_0^\infty(\R^n)$ have disjoint support, and there is a constant $C>0$ such that the kernel function $K$ satisfies
\begin{align*}
&|D_0^\alpha D_1^\beta K(x,y)|\leq \frac{C}{|x-y|^{n+|\alpha|+|\beta|}}\text{ for all }|\alpha|,|\beta|\leq M,\\
&|D_0^\alpha D_1^\beta K(x,y)-D_0^\alpha D_1^\beta K(x',y)|\leq \frac{C|x-x'|^\gamma}{|x-y|^{n+M+|\beta|+\gamma}}\text{ for }|\beta|\leq|\alpha|=M,\text{  }|x-x'|<|x-y|/2,\\
&|D_0^\alpha D_1^\beta K(x,y)-D_0^\alpha D_1^\beta K(x,y')|\leq \frac{C|y-y'|^\gamma}{|x-y|^{n+|\alpha|+M+\gamma}}\text{ for }|\alpha|\leq|\beta|=M,\text{  }|y-y'|<|x-y|/2.
\end{align*}
We will also define moment distributions for an operator $T\in CZO(M+\gamma)$, but we require some notation first.  For an integer $M\geq0$, define the collections of smooth functions of polynomial growth $\mathcal O_M=\mathcal O_M(\R^n)$ and of smooth compactly supported function with vanishing moments $\mathcal D_M=\mathcal D_M(\R^n)$ by
\begin{align*}
&\mathcal O_M=\left\{f\in C^\infty(\R^n):\sup_{x\in\R^n}|f(x)|\cdot(1+|x|)^{-M}<\infty\right\}\text{ and}\\
&\mathcal D_M=\left\{f\in C_0^\infty(\R^n):\int_{\R^n}f(x)x^\alpha dx=0\text{ for all }|\alpha|\leq M\right\}.
\end{align*}
Let $\eta\in C_0^\infty(\R^n)$ be supported in $B(0,2)$, $\eta(x)=1$ for $x\in B(0,1)$, and $0\leq\eta\leq1$.  Define for $R>0$, $\eta_R(x)=\eta(x/R)$.  We reserve this notation for $\eta$ and $\eta_R$ throughout.  In \cite{T,FTW,FHJW}, the authors define $Tf$ for $f\in\mathcal O_M$ where $T$ is a linear singular integral operator.  We give an equivalent definition to the ones in \cite{T,FTW,FHJW}.  Let $T$ be a $CZO(M+\gamma)$ and $f\in\mathcal O_{M}$ for some integer $M\geq0$ and $0<\gamma\leq1$.  For $\psi\in C_0^\infty(\R^n)$, choose $R_0\geq1$ minimal so that $\supp(\psi)\subset\overline{B(0,R_0/4)}$, and define
\begin{align*}
&\<Tf,\psi \>=\lim_{R\rightarrow\infty}\<T(\eta_R\,f),\psi\> - \sum_{|\beta|\leq M}\int_{\R^{2n}}\frac{D_0^\beta K(0,y)}{\beta!}x^\beta(\eta_R(y)-\eta_{R_0}(y))f(y)\psi(x) dy\,dx.
\end{align*}
This limit exists based on the kernel representation and kernel properties for $T\in CZO(M+\gamma)$ and is independent of the choice of $\eta$, see \cite{T,FTW,FHJW} for proof of this fact.  The choice of $R_0$ here is not of consequence as long as $R_0$ is large enough so that $\supp(\psi)\subset\overline{B(0,R_0/4)}$; we choose it minimal to make this definition precise.  The definition of $\<Tf,\psi\>$ depends on $\psi$ here through the support properties of $\psi\in C_0^\infty$, but for $\psi\in\mathcal D_M$, it follows that $\<Tf,\psi \>=\lim_{R\rightarrow\infty}\<T(\eta_R\,f),\psi\>$ since the integral term above vanishes for such $\psi$.  Now we define the moment distribution $[[T]]_\alpha\in\mathcal D_M'$ for $T\in CZO(M+\gamma)$ and $\alpha\in\N_0^n$ with $|\alpha|\leq M$ by
\begin{align*}
\<[[T]]_\alpha,\psi\>&=\lim_{R\rightarrow\infty}\int_{\R^{2n}}\mathcal K(u,y)\psi(u)\eta_R(y)(u-y)^\alpha dy\,du
\end{align*}
for $\psi\in\mathcal D_{|\alpha|}$, where $\mathcal K\in\S'(\R^{2n})$ is the distribution kernel of $T$.  We abuse notation here in that the integral in this definition is not necessarily a measure theoretic integral; rather, it is the dual pairing between elements of $\S(\R^{2n})$ and $\S'(\R^{2n})$.  Throughout this work, we will use $\mathcal K$ to denote distributional kernels and $K$ to denote function kernels for Calder\'on-Zygmund operators.  When we write $\mathcal K$ in an integral over $\R^{2n}$, the integral is understood to be a the pairing of $\mathcal K\in\S'(\R^{2n})$ with an element of $\S(\R^{2n})$.  It is not hard to show that this definition is well-defined by techniques from \cite{T,FTW,FHJW}.  This distributional moment associated to $T$ generalizes the notion of $T1$ as used in \cite{DJ} in the sense that $\<[[T]]_0,\psi\>=\<T1,\psi\>$ for all $\psi\in\mathcal D_0$ and hence $[[T]]_0=T1$.  We will also use a generalized notion of $BMO$ here to extend the cancellation conditions $T1,T^*1\in BMO$, which were used in the $T1$ theorem from \cite{DJ}.  Let $M\geq0$ be an integer and $F\in\mathcal D_M'/\mathcal P$, that is $\mathcal D_M'$ modulo polynomials.  We say that $F\in BMO_M$ if
\begin{align*}
\sum_{k\in\Z}2^{2Mk}|Q_kF(x)|^2dx\,\delta_{t=2^{-k}}
\end{align*}
is a Carleson measure for any $\psi\in\mathcal D_M$, where $Q_kf=\psi_k*f$ and $\psi_k(x)=2^{kn}\psi(2^kx)$.  This definition agrees with the classical definition of $BMO$.  That is, for $F\in BMO_0$,
\begin{align*}
\sum_{k\in\Z}|Q_kF(x)|^2dx\,\delta_{t=2^{-k}}
\end{align*}
is a Carleson measure, and hence $F\in BMO$ by the $BMO$ characterization in terms of Carleson measures in \cite{C,J}.  A similar polynomial growth $BMO_M$ was defined by Youssfi \cite{Y}.  We use this polynomial growth $BMO_M$ to quantify our cancellation conditions for operators $T\in CZO(M+\gamma)$ in the following result.

%\begin{theorem}\label{t:CZbound}
%Let $T\in CZO(M+\gamma)$ be bounded on $L^2$ and define $L=\lfloor M/2\rfloor$ and $\delta=(M-2L+\gamma)/2$.  If $T^*(x^\alpha)=0$ in $\mathcal D_M'$ for all $|\alpha|\leq L$ and $[[T]]_\alpha\in BMO_{|\alpha|}$ for all $|\alpha|\leq L$, then $T$ extends to a bounded operator on $H^p$ for $\frac{n}{n+L+\delta}<p\leq1$.
%\end{theorem}

\begin{theorem}\label{t:reducedCZbound}
Let $T\in CZO(M+\gamma)$ be bounded on $L^2$ and define $L=\lfloor M/2\rfloor$ and $\delta=(M-2L+\gamma)/2$.  If $T^*(x^\alpha)=0$ in $\mathcal D_M'$ for all $|\alpha|\leq L$ and $[[T]]_\alpha\in BMO_{|\alpha|}$ for all $|\alpha|\leq L$, then $T$ extends to a bounded operator on $H^p$ for $\frac{n}{n+L+\delta}<p\leq1$.
\end{theorem}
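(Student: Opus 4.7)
The plan is to reduce Theorem~\ref{t:reducedCZbound} to an application of Theorem~\ref{t:sqbound}. Fix $\varphi\in\mathcal D_M$ such that the Littlewood--Paley square function $S_\varphi g := (\sum_k|Q_k g|^2)^{1/2}$, where $Q_k g := \varphi_k * g$, characterizes the $H^p$-norm for $p$ in the range under consideration. Set $\Lambda_k := Q_k T$, so that $S_\Lambda f = S_\varphi(Tf)$. If $\{\Lambda_k\}$ satisfies the hypotheses of Theorem~\ref{t:sqbound}, we will obtain $\|Tf\|_{H^p}\approx\|S_\Lambda f\|_{L^p}\lesssim\|f\|_{H^p}$ provided $Tf$ lies in the correct distribution class; the hypothesis $T^*(x^\alpha)=0$ for $|\alpha|\leq L$ furnishes exactly the needed moment cancellation, since $\int(Tf)(x)\,x^\alpha\,dx=\langle f, T^*(x^\alpha)\rangle=0$ for sufficiently regular $f$.

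The bulk of the proof is to verify both hypotheses of Theorem~\ref{t:sqbound} for $\{Q_k T\}$ with $N = n+2L+2\delta$ and smoothness exponent $L+\delta=(M+\gamma)/2$. First I would show $\{Q_k T\}\in LPSO(n+2L+2\delta,\, L+\delta)$. Off the diagonal the kernel of $Q_k T$ is given by
\[
  \lambda_k(x,y) = \int \varphi_k(x-u)\,K(u,y)\,du.
\]
For $|x-y|\gtrsim 2^{-k}$, Taylor expand $D_1^\beta K(u,y)$ in $u$ about $u=x$ through order $M$ and use the vanishing moments of $\varphi$ of orders $0,\ldots,M$ to annihilate the polynomial part; controlling the remainder via the H\"older estimate on $D_0^M D_1^\beta K$ yields $|D_1^\beta\lambda_k(x,y)|\lesssim 2^{-k(M+\gamma)}|x-y|^{-n-|\beta|-M-\gamma}$ for $|\beta|\leq L$, which dominates $2^{k|\beta|}\Phi_k^N(x-y)$ since $M+\gamma=2L+2\delta$. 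The H\"older-in-$y$ estimate for $|\beta|=L$ proceeds along the same lines using the H\"older regularity of $D_0^M D_1^L K$ in $y$. The near-diagonal case $|x-y|\lesssim 2^{-k}$ reduces to the size bound $|\lambda_k(x,y)|\lesssim 2^{kn}$, handled by splitting the defining integral around the singularity of $K$ and exploiting the Lipschitz continuity of $\varphi_k$.

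Next I would verify that $[[\Lambda_k]]_\alpha$ generates a Carleson measure for each $|\alpha|\leq L$. Writing $[[Q_k T]]_\alpha(x) = 2^{k|\alpha|}\langle T((x-\cdot)^\alpha),\varphi_k(x-\cdot)\rangle$ via the extension of $T$ to $\mathcal O_M$, expanding $(x-y)^\alpha = \sum_{\beta\leq\alpha}\binom{\alpha}{\beta}(x-u)^\beta(u-y)^{\alpha-\beta}$, and using the identity $\varphi_k(x-u)(x-u)^\beta = 2^{-k|\beta|}(\tilde\varphi^\beta)_k(x-u)$ for $\tilde\varphi^\beta(z) := z^\beta\varphi(z)\in\mathcal D_{|\alpha-\beta|}$ produces
\[
  [[Q_k T]]_\alpha(x) = \sum_{\beta\leq\alpha}\binom{\alpha}{\beta}\,2^{k|\alpha-\beta|}\,\tilde Q_k^\beta[[T]]_{\alpha-\beta}(x),
\]
where $\tilde Q_k^\beta g := (\tilde\varphi^\beta)_k*g$. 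Since $[[T]]_{\alpha-\beta}\in BMO_{|\alpha-\beta|}$ by hypothesis and $\tilde\varphi^\beta\in\mathcal D_{|\alpha-\beta|}$, each measure $\sum_k|2^{k|\alpha-\beta|}\tilde Q_k^\beta[[T]]_{\alpha-\beta}(x)|^2\,\delta_{t=2^{-k}}\,dx$ is a Carleson measure; a finite sum of Carleson measures remains Carleson.

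With both hypotheses of Theorem~\ref{t:sqbound} verified, we conclude $S_\Lambda:H^p\to L^p$ boundedly for $n/(n+L+\delta)<p\leq 1$, and together with the moment-preservation role of $T^*(x^\alpha)=0$ this completes the proof. The main technical obstacle I anticipate is the careful distributional bookkeeping in the Carleson step: the formal identity for $[[Q_k T]]_\alpha$ must be justified by passing through the $\eta_R$-truncations used to define both the moment distributions $[[T]]_{\alpha-\beta}$ and the extension of $T$ to polynomials, so that each conditionally convergent integral is handled rigorously.
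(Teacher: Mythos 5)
Your overall strategy is exactly the paper's: set $\Lambda_k=Q_kT$, show $\{\Lambda_k\}$ lies in the appropriate $LPSO$ class, verify the Carleson measure condition for $[[Q_kT]]_\alpha$ using the hypothesis $[[T]]_\alpha\in BMO_{|\alpha|}$, and then invoke Theorem~\ref{t:sqbound} together with the Littlewood--Paley characterization of $H^p$. The Carleson step in your proposal is correct and matches the paper's Lemma~\ref{l:Txconditions} almost verbatim (the product-rule expansion of $(x-y)^\alpha$ and the resulting identity expressing $[[Q_kT]]_\alpha$ as a sum of dyadic convolutions of the moments $[[T]]_{\alpha-\beta}$ is the paper's computation).

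There is, however, a genuine gap in your verification that $\{Q_kT\}\in LPSO(n+2L+2\delta,\,\cdot\,)$: the near-diagonal case is dismissed too quickly, and it is precisely there that the real work sits. When $|x-y|\lesssim 2^{-k}$ the expression $\lambda_k(x,y)=\int\varphi_k(x-u)K(u,y)\,du$ is not an absolutely convergent integral -- $K(u,y)$ is only $O(|u-y|^{-n})$, and for the derivatives $D_1^\beta\lambda_k$ with $|\beta|\le L$ the singularity is $O(|u-y|^{-n-|\beta|})$, which is strictly worse. ``Splitting the integral around the singularity and using Lipschitz continuity of $\varphi_k$'' does not produce convergence or the required size/regularity bounds. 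You must use the $L^2$ boundedness of $T$ (or a weak boundedness property) together with the cancellation $T^*(x^\mu)=0$ for $|\mu|\le L$ in this regime. The paper does this (Theorem~\ref{t:CZtoLPS}) by writing $T^*\psi_k^x=\sum_{\ell\ge k}\widetilde Q_\ell T^*\psi_k^x+P_kT^*\psi_k^x$, bounding the near piece of each $\widetilde Q_\ell T^*\psi_k^x$ by Cauchy--Schwarz and $\|T\|_{2,2}$, bounding the far piece by the $CZ$ kernel estimates, and bounding $P_kT^*\psi_k^x$ directly by $\|T\|_{2,2}$. Without some such mechanism your near-diagonal argument does not close. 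A secondary, smaller issue: this decomposition in fact yields H\"older smoothness $L+\delta'$ only for $\delta'<\delta$ (not $L+\delta$ as you claim), because of a geometric-series loss in the near-diagonal estimate; this is harmless for the stated range of $p$ but should be tracked, since $\Phi_k^{n+2L+2\delta}$ in the $LPSO$ definition is still needed with the full exponent $N=n+2L+2\delta$.
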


Recall here that the operator $T^*$ is defined from $\S$ into $\S'$ via $\<T^*f,g\>=\<Tg,f\>$, and the definition of $T^*$ is extended to an operator from $\mathcal O_M$ to $\mathcal D_M'$ by the methods discussed above.  Note also that this is not a full necessary and sufficient theorem for Hardy space bounds as described above.  This theorem will be used to prove the boundedness of certain paraproduct operators, which in turn allow us to prove the full necessary and sufficient theorem, which is stated in Theorem \ref{t:CZbound} at the end of this section.

The choice of $L$ and $\delta$ here are such that $L\geq0$ is an integer, $0<\delta\leq1$, and $2(L+\delta)=M+\gamma$.  It is also not hard to see that $T^*(x^\alpha)=0$ for all $|\alpha|\leq L$ if and only if $[[T^*]]_\alpha=0$ for all $|\alpha|\leq L$.  We prove Theorem \ref{t:CZbound} by decomposing an operator $T\in CZO(M+\gamma)$ into a collection of operators $\{\Lambda_k\}\in LPSO(n+2L+2\delta,L+\delta')$ for $0<\delta'<\delta$ and applying Theorem \ref{t:sqbound}.  This decomposition of $T$ into a collection of Littlewood-Paley-Stein operators is stated precisely in the next theorem.

\begin{theorem}\label{t:CZtoLPS}
Let $T\in CZO(M+\gamma)$ for some integer $M\geq1$ and $0<\gamma\leq1$ be bounded on $L^2$, and fix $\psi\in\mathcal D_{M}$.  Also let $L=\lfloor M/2\rfloor$ and $\delta=(M-2L+\gamma)/2$.  If $T^*(x^\alpha)=0$ in $\mathcal D_{M}'$ for all $|\alpha|\leq L$, then $\{\Lambda_k\}\in LPSO(n+2L+2\delta,L+\delta')$ for all $0<\delta'<\delta$, where $\Lambda_k=Q_kT$ and $Q_kf(x)=\psi_k*f(x)$.  Furthermore, for $\frac{n}{n+L+\delta}<p\leq1$, $T$ extends to a bounded operator on $H^p$ if and only if $S_{\Lambda}$ extends to a bounded operator from $H^p$ into $L^p$.
\end{theorem}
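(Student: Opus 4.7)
The proof splits naturally into two parts: (a) showing that the collection $\{\Lambda_k\}$, with $\Lambda_k=Q_kT$, lies in $LPSO(n+2L+2\delta,L+\delta')$ for every $0<\delta'<\delta$, and (b) establishing the equivalence of $H^p$-boundedness of $T$ with the $H^p\to L^p$-boundedness of $S_\Lambda$. Part (a) is the technical core and combines the vanishing moments of $\psi\in\mathcal{D}_M$ with the cancellation hypothesis $T^*(x^\alpha)=0$ and the Calder\'on-Zygmund kernel estimates on $T$. Part (b) follows from a discrete Calder\'on reproducing formula together with the square function characterization of $H^p$.

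For part (a), I would write the kernel of $\Lambda_k$ as
\begin{align*}
\lambda_k(x,y)=\int_{\R^n}\psi_k(x-z)K(z,y)\,dz=T^*\psi_k^x(y),
\end{align*}
where $\psi_k^x(z)=\psi_k(x-z)$ is a $\mathcal{D}_M$-bump at scale $2^{-k}$ centered at $x$. In the far regime $|x-y|\gtrsim 2^{-k}$, I would subtract from $K(\cdot,y)$ its Taylor polynomial of degree $M$ at $x$; this polynomial is annihilated by the moment conditions on $\psi_k$, and the remainder is bounded by the H\"older kernel estimate $|D_0^\sigma K(\xi,y)-D_0^\sigma K(x,y)|\lesssim|\xi-x|^\gamma|x-y|^{-(n+M+\gamma)}$ for $|\sigma|=M$. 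Since $M+\gamma=2(L+\delta)=N-n$, integration against $\psi_k$ produces the required $\Phi_k^N(x-y)$ decay. In the diagonal regime $|x-y|\lesssim 2^{-k}$, the bound $|\lambda_k(x,y)|\lesssim 2^{kn}$ follows from an almost-orthogonality argument using the $L^2$-boundedness of $T$; here the hypothesis $T^*(x^\alpha)=0$ for $|\alpha|\leq L$ supplies the cancellation needed for the off-diagonal estimates between $T^*$ and $\psi_k^x$. The derivative estimates for $|\alpha|\leq L$ are obtained analogously with $D_1^\alpha K$ in place of $K$.

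The H\"older estimate for $|\alpha|=L$ is the main obstacle, since the Calder\'on-Zygmund kernel only provides H\"older continuity in $y$ when $|\beta|=M$, not $|\beta|=L<M$. I would resolve this by interpolating between the pointwise bound $|D_1^L K|\lesssim|x-y|^{-(n+L)}$ and the derivative bound $|D_1^{L+1}K|\lesssim|x-y|^{-(n+L+1)}$ (the latter available since $L+1\leq M$), yielding any H\"older exponent strictly less than $1$. The restriction $\delta'<\delta$ in the statement arises from carefully balancing this interpolation against the diagonal regime and the required $\Phi_k^N$ decay rate, and is a genuine artifact of combining the $Q_k$-smoothness with only part of the kernel regularity of $T$.

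For part (b), the forward direction is immediate from $S_\Lambda f=S_\psi(Tf)$ combined with the Bui-Paluszy\'nski-Taibelson type estimate $\|S_\psi g\|_{L^p}\lesssim\|g\|_{H^p}$, valid for $\psi\in\mathcal{D}_M$ with $M$ sufficiently large relative to $p$, which holds in the range $\frac{n}{n+L+\delta}<p\leq 1$. For the converse, I would select a companion $\tilde\psi\in\mathcal{D}_M$ yielding the discrete Calder\'on reproducing formula $\sum_k\tilde Q_k Q_k=I$ on $H^p$, so that
\begin{align*}
Tf=\sum_k\tilde Q_k Q_k Tf=\sum_k\tilde Q_k\Lambda_k f,
\end{align*}
and conclude via the Frazier-Jawerth type bound $\|\sum_k\tilde Q_k g_k\|_{H^p}\lesssim\|(\sum_k|g_k|^2)^{1/2}\|_{L^p}$, applied with $g_k=\Lambda_k f$.
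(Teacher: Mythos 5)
Your treatment of the far regime $|x-y|\gtrsim 2^{-k}$ (Taylor-expand $K(\cdot,y)$ at $x$ up to degree $M$, kill the polynomial with the vanishing moments of $\psi_k$, and control the remainder with the H\"older bound on $D_0^M K$) is exactly the paper's argument, and part (b) is also identical: since $\Lambda_k f=\psi_k*(Tf)$, the equivalence $\|Tf\|_{H^p}\approx\|S_\Lambda f\|_{L^p}$ is the Littlewood–Paley characterization of $H^p$ applied to $Tf$.

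The gap is in the diagonal regime $|x-y|\lesssim 2^{-k}$, which is where most of the work lies. You dismiss it with ``an almost-orthogonality argument using the $L^2$-boundedness of $T$,'' but there is no off-the-shelf almost-orthogonality that produces the pointwise kernel bound $|D_1^\alpha\lambda_k(x,y)|\lesssim 2^{k|\alpha|}\Phi_k^{N}(x-y)$ with the correct $\Phi$-decay; you would need to say how. The paper's mechanism is: take a second family $\widetilde Q_\ell$, $P_\ell$ arising from a Calder\'on formula and write $T^*\psi_k^x=\sum_{\ell\geq k}\widetilde Q_\ell T^*\psi_k^x+P_kT^*\psi_k^x$. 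For each $\ell>k$, one estimates $\langle T\widetilde\psi_\ell^y,\psi_k^x\rangle$; the hypothesis $T^*(x^\mu)=0$ for $|\mu|\leq L$ is used to subtract the degree-$L$ Taylor polynomial of $\psi_k^x$ centered at $y$ (not a Taylor polynomial of the kernel $K$), and the resulting integral is split into $|u-y|\leq 2^{1-\ell}$ (bounded by Cauchy–Schwarz plus $L^2$-boundedness of $T$) and $|u-y|>2^{1-\ell}$ (bounded by the kernel estimates of $T$ in the second variable). It is the convergence of the resulting geometric series in $\ell$—which requires choosing intermediate exponents $\delta'<\delta''<\delta$—that forces the loss $\delta'<\delta$ in the conclusion. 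Your attribution of the $\delta'<\delta$ loss to an interpolation between $D_1^L K$ and $D_1^{L+1}K$ is therefore misplaced: that MVT-type interpolation actually yields the full Lipschitz exponent $1$ in the far regime (and indeed the paper gets the far-regime H\"older estimate with no loss by deriving the size bound for $|\alpha|\leq L+1$ directly); the loss is entirely an artifact of summing the reproducing-formula pieces in the diagonal regime. As written, the proposal does not contain the key decomposition and Taylor-subtraction-of-$\psi_k^x$ step, so the diagonal estimate is not actually established.
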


Throughout, we write $L^p=L^p(\R^n)$ and $H^p=H^p(\R^n)$ for $0<p<\infty$.  We will also apply Theorem \ref{t:CZbound} to Bony paraproducts operator, which were originally defined in \cite{B} and famously applied in the $T1$ theorem \cite{DJ} (see also \cite{BMN}).  Let $\psi\in\mathcal D_{L+1}$ for some $L\geq0$ and $\varphi\in C_0^\infty$.  Define $Q_kf=\psi_k*f$ and $P_kf=\varphi_k*f$.  For $\beta\in BMO$, define
\begin{align}
\Pi_\beta f(x)&=\sum_{j\in\Z}Q_j\(Q_j\beta\cdot P_jf\)(x).\label{paraproduct}
\end{align}
It easily follows that $\Pi_\beta\in CZO(M+\gamma)$ for all $M\geq0$ and $0<\gamma\leq1$.  It is well known that $\Pi_\beta^*(1)=0$, and if one selects $\psi$ and $\varphi$ appropriately, it also follows that $\Pi_\beta(1)=\beta$ in $BMO$ as well.  We are not interested in an exact identification of $\Pi_\beta(1)$ in this work, so we don't worry about the extra conditions that should be imposed on $\psi$ and $\varphi$ to assure that $\Pi_\beta(1)=\beta$.  

\begin{theorem}\label{t:Bonyparaproduct}
Let $\Pi_\beta$ be as in \eqref{paraproduct} for $\beta\in BMO$, $\psi\in\mathcal D_{L+1}$, and $\varphi\in C_0^\infty$.  Then $\Pi_\beta$ is bounded on $H^p$ for all $\frac{n}{n+L+1}<p\leq1$.
\end{theorem}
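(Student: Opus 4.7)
The plan is to apply Theorem \ref{t:reducedCZbound} with $M=2L+1$ and $\gamma=1$, for which $\lfloor M/2\rfloor=L$ and $(M-2L+\gamma)/2=1$, so the theorem outputs exactly the Hardy-space range $\frac{n}{n+L+1}<p\leq 1$. It thus remains to verify four hypotheses: (a) $\Pi_\beta\in CZO(2L+2)$; (b) $\Pi_\beta$ is bounded on $L^2$; (c) $\Pi_\beta^*(x^\alpha)=0$ in $\mathcal{D}_{2L+1}'$ for every $|\alpha|\leq L$; and (d) $[[\Pi_\beta]]_\alpha\in BMO_{|\alpha|}$ for every $|\alpha|\leq L$.

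Conditions (a) and (b) are routine for the Bony paraproduct. The kernel $K(x,y)=\sum_j\int\psi_j(x-z)Q_j\beta(z)\varphi_j(z-y)\,dz$ satisfies Calder\'on--Zygmund estimates of any order by splitting the $j$-sum according to whether $2^{-j}\lesssim|x-y|$ and using the compact supports of $\psi$ and $\varphi$ together with local integrability of $\beta\in BMO$. The $L^2$-boundedness is the classical fact that $|Q_j\beta(z)|^2\,dz\,\delta_{t=2^{-j}}$ is a Carleson measure (since $\beta\in BMO$), combined with the Littlewood--Paley square function estimate for $\{Q_j\}$ and Carleson's embedding theorem. For (c), pairing against $\phi\in\mathcal{D}_{2L+1}$ yields
\begin{equation*}
\<\Pi_\beta^*(x^\alpha),\phi\>=\sum_j\iint\psi_j(x-z)Q_j\beta(z)P_j\phi(z)\,x^\alpha\,dz\,dx,
\end{equation*}
and expanding the inner $x$-integration as $\int\psi_j(x-z)\,x^\alpha\,dx=\sum_{\mu\leq\alpha}\binom{\alpha}{\mu}z^{\alpha-\mu}\int\psi_j(w)w^\mu\,dw$ gives zero term by term since $|\mu|\leq|\alpha|\leq L<L+1$ and $\psi\in\mathcal{D}_{L+1}$.

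The main obstacle will be verifying condition (d). Expanding $(u-y)^\alpha=\sum_{\sigma\leq\alpha}\binom{\alpha}{\sigma}(u-z)^\sigma(z-y)^{\alpha-\sigma}$ in the kernel formula for $[[\Pi_\beta]]_\alpha$, passing to the limit $R\to\infty$, and introducing $\Psi^\sigma(v):=v^\sigma\psi(v)\in\mathcal{D}_{L+1-|\sigma|}$ together with $C_{\alpha-\sigma}:=\int\varphi(v)v^{\alpha-\sigma}\,dv$ yields the identity
\begin{equation*}
[[\Pi_\beta]]_\alpha=\sum_{\sigma\leq\alpha}\binom{\alpha}{\sigma}C_{\alpha-\sigma}\sum_{j\in\Z}2^{-j|\alpha|}\,\Psi^\sigma_j*Q_j\beta
\end{equation*}
in $\mathcal{D}_{|\alpha|}'/\mathcal{P}$; the key observation is that the factor $2^{-j|\alpha-\sigma|}$ from the $\varphi$-moments combines with $2^{-j|\sigma|}$ from extracting $\Psi^\sigma_j$ out of $\psi_j(w)w^\sigma$ to produce the clean scaling $2^{-j|\alpha|}$ and, crucially, no polynomial prefactor in $u$. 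Fixing any $\widetilde\psi\in\mathcal{D}_{|\alpha|}$ and setting $\widetilde Q_kf=\widetilde\psi_k*f$, the standard almost-orthogonality estimate, exploiting the $|\alpha|+1$ vanishing moments of $\widetilde\psi$ when $k\geq j$ and the vanishing moments of $\Psi^\sigma$ when $k<j$, gives $|\widetilde\psi_k*\Psi^\sigma_j(z)|\lesssim 2^{-|k-j|(|\alpha|+1)}\Phi_{k\wedge j}^N(z)$ for $k\geq j$ and a sufficient decay estimate for $k<j$. Combining with $|\Phi_{k\wedge j}^N*Q_j\beta(x)|\lesssim M(Q_j\beta)(x)$ and the weighting $2^{|\alpha|k}\cdot 2^{-j|\alpha|}=2^{|\alpha|(k-j)}$ produces
\begin{equation*}
2^{|\alpha|k}|\widetilde Q_k[[\Pi_\beta]]_\alpha(x)|\lesssim\sum_{j\in\Z}2^{-|k-j|}M(Q_j\beta)(x).
\end{equation*}
A Schur-type argument then transfers the Carleson property of $|Q_j\beta|^2\,dx\,\delta_{t=2^{-j}}$ to $\sum_k 2^{2|\alpha|k}|\widetilde Q_k[[\Pi_\beta]]_\alpha|^2\,dx\,\delta_{t=2^{-k}}$, establishing (d). Theorem \ref{t:reducedCZbound} then delivers $\Pi_\beta:H^p\to H^p$ in the claimed range.
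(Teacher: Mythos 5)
Your proposal is correct in its overall structure and matches the paper's route through Theorem \ref{t:reducedCZbound}: choose $M=2L+1$, $\gamma=1$ so that $\lfloor M/2\rfloor=L$ and $\delta=1$, then verify $L^2$-boundedness, the CZ kernel estimates, $\Pi_\beta^*(x^\alpha)=0$ for $|\alpha|\leq L$, and $[[\Pi_\beta]]_\alpha\in BMO_{|\alpha|}$. (The paper's Section 5 nominally cites Theorem \ref{t:CZbound}, but since it verifies the $BMO_{|\alpha|}$ conditions, it is really invoking Theorem \ref{t:reducedCZbound}, as you do.) Your computation of $\Pi_\beta^*(x^\alpha)$ and your identity $[[\Pi_\beta]]_\alpha=\sum_{\sigma\leq\alpha}\binom{\alpha}{\sigma}C_{\alpha-\sigma}\sum_j 2^{-j|\alpha|}\Psi_j^\sigma*Q_j\beta$ are exactly the paper's (with $\Psi^\sigma=\psi^{(\mu)}$ in the paper's notation).

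Where you diverge from the paper is in proving $[[\Pi_\beta]]_\alpha\in BMO_{|\alpha|}$. The paper does this on the Fourier side (Lemma \ref{l:conv}): it factors $2^{|\alpha|(k-j)}Q_k Q_j^{(\mu)}Q_j\beta$ as $W_k*V_j^{(\mu)}*\beta$ with $\widehat W(\xi)=|\xi|^{-|\alpha|}\widehat\psi(\xi)$ and $\widehat{V^{(\mu)}}(\xi)=|\xi|^{|\alpha|}\widehat{\psi^{(\mu)}}(\xi)\widehat\psi(\xi)$, shows $\sum_j V_j^{(\mu)}*$ is bounded on $BMO$ via a Mikhlin-type multiplier bound, and then reads off the Carleson estimate directly from $W_k*(T_{V^{(\mu)}}\beta)$ and the Carleson characterization of $BMO$. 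You instead use real-variable almost-orthogonality of $\widetilde\psi_k*\Psi_j^\sigma$ and a ``Schur-type'' transfer of the Carleson property; the moment counting is consistent (on the side $k\geq j$ the $|\alpha|+1$ moments of $\widetilde\psi$ cancel the $2^{|\alpha|(k-j)}$ weight, on the side $k<j$ the at-least-one moment of $\Psi^\sigma$ together with the favorable sign of $2^{|\alpha|(k-j)}$ give geometric decay). This works, but the final step deserves more care than ``a Schur-type argument'': passing from the Carleson property of $\sum_j|Q_j\beta|^2\,dx\,\delta_{t=2^{-j}}$ to that of $\sum_k(\sum_j 2^{-|k-j|}\Phi_{\min(j,k)}^N*|Q_j\beta|)^2\,dx\,\delta_{t=2^{-k}}$ is not a bare Schur test: you need to split $Q_j\beta$ into its part on $3Q$ (controlled in $L^2$ by the Carleson condition) and its tail, using the decay of $\Phi_{\min(j,k)}^N$ at scale $\leq\ell(Q)$ and the uniform bound $\|Q_j\beta\|_\infty\lesssim\|\beta\|_{BMO}$; replacing $\Phi_{\min(j,k)}^N*|Q_j\beta|$ too early by $M(Q_j\beta)$ discards the decay that makes the tail estimate work. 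Once this local/tail decomposition is carried out the argument closes, so your route is valid but technically heavier than the paper's Fourier-side computation, which yields the $BMO$ bound exactly and painlessly.
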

By Theorem \ref{t:Bonyparaproduct} it is possible to construct $\Pi_\beta$ so that it is bounded on $H^p$ for $p>0$ arbitrarily small by choosing $\psi\in\mathcal D_{L+1}$ for $L$ sufficiently large.  It should be noted that some Hardy space estimates for a variant of the Bony paraproduct in \eqref{paraproduct} were proved in \cite{GK}.  Although we use a different construction of the paproduct, so we will prove Theorem \ref{t:Bonyparaproduct} here as well.  Finally, we state the first necessary and sufficient boundedness theorem for Calder\'on-Zygmund operators on Hardy spaces.

\begin{theorem}\label{t:CZbound}
Let $T\in CZO(M+\gamma)$ be bounded on $L^2$ and define $L=\lfloor M/2\rfloor$ and $\delta=(M-2L+\gamma)/2$.  Then $T^*(x^\alpha)=0$ in $\mathcal D_M'$ for all $|\alpha|\leq L$ if and only if $T$ extends to a bounded operator on $H^p$ for $\frac{n}{n+L+\delta}<p\leq1$.
\end{theorem}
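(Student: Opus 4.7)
The plan is to combine Theorem~\ref{t:reducedCZbound} and Theorem~\ref{t:Bonyparaproduct} (for the sufficient direction) with a duality/atomic argument (for the necessary direction).

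For the \emph{necessary} direction, I would assume $T$ is bounded on $H^p$ throughout the stated range, fix $|\alpha|\leq L$ and $\psi\in\mathcal D_M$, and aim to show $\<T^*(x^\alpha),\psi\>=0$. Since $\psi$ is compactly supported with vanishing moments through order $M$, $\psi\in H^p$, hence $T\psi\in H^p$. The kernel estimates on $T\in CZO(M+\gamma)$ together with the cancellation of $\psi$ yield decay $|T\psi(x)|\lesssim|x|^{-n-M-\gamma}$ at infinity, so $x^\alpha T\psi$ is absolutely integrable. By the definition of $\<T^*(x^\alpha),\psi\>$ given in the introduction---whose correction terms vanish on $\mathcal D_M$---this pairing reduces to $\int x^\alpha T\psi(x)\,dx$. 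Choosing $p\in(n/(n+L+\delta),n/(n+L)]$ so that $\lfloor n(1/p-1)\rfloor=L$ (which is possible because $\delta>0$), the atomic characterization of $H^p$ together with the decay of $T\psi$ forces $\int x^\alpha T\psi\,dx=0$. Since $\psi\in\mathcal D_M$ is arbitrary, $T^*(x^\alpha)=0$ in $\mathcal D_M'$.

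For the \emph{sufficient} direction, assume $T^*(x^\alpha)=0$ for all $|\alpha|\leq L$. The plan is to subtract a finite collection of generalized Bony-type paraproducts in order to reduce to Theorem~\ref{t:reducedCZbound}. First, I would establish that any $L^2$-bounded $T\in CZO(M+\gamma)$ automatically satisfies $[[T]]_\alpha\in BMO_{|\alpha|}$ for each $|\alpha|\leq L$, generalizing the classical $T1\in BMO$ fact; this can be obtained from the Carleson measure argument in \cite{C,J} applied to the decomposition $\Lambda_k=Q_kT$ furnished by Theorem~\ref{t:CZtoLPS}. Next, for each $|\alpha|\leq L$ I would construct a paraproduct $\Pi_\alpha$ of the form (\ref{paraproduct}) using an outer factor in $\mathcal D_{L+1}$ (so that $\Pi_\alpha^*(x^\gamma)=0$ for all $|\gamma|\leq L$) and driven by a $BMO$ function adapted to $[[T]]_\alpha$ so that $[[\Pi_\alpha]]_\gamma$ agrees with $[[T]]_\alpha$ when $\gamma=\alpha$ and vanishes for the remaining $|\gamma|\leq L$. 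Then $\tilde T:=T-\sum_{|\alpha|\leq L}\Pi_\alpha$ is an $L^2$-bounded $CZO(M+\gamma)$ with $\tilde T^*(x^\alpha)=0$ and $[[\tilde T]]_\alpha=0\in BMO_{|\alpha|}$, so Theorem~\ref{t:reducedCZbound} supplies $\tilde T:H^p\to H^p$. Each $\Pi_\alpha$ is bounded on $H^p$ by Theorem~\ref{t:Bonyparaproduct} on the range $p>n/(n+L+1)$, which contains $(n/(n+L+\delta),1]$, and summation yields the conclusion.

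The principal obstacle is the construction of the moment-matching paraproducts $\Pi_\alpha$. One must generalize (\ref{paraproduct})---likely by inserting a polynomial weight or a Taylor-expanded kernel tuned to the multi-index $\alpha$---and then argue by a triangular/inductive procedure over the ordering of $\alpha$'s so that neutralizing one moment does not reintroduce those already cancelled. A secondary but essential step is the $BMO_{|\alpha|}$ membership of $[[T]]_\alpha$ from $L^2$-boundedness alone, which requires pushing the classical $T1\in BMO$ result to higher-order moments through the square-function framework of the earlier sections.
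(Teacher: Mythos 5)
The necessity direction of your plan matches the paper in spirit: both exploit that $T\psi\in H^p$ for $\psi\in\mathcal D_L$, that the kernel estimates give $|T\psi(x)|\lesssim|x|^{-n-M-\gamma}$ so that $\int x^\alpha T\psi\,dx$ converges absolutely for $|\alpha|<L+\delta$, and that an $H^p\cap L^1$ function with this range of $p$ must have vanishing moments up to that order (the paper outsources this last fact to Theorem 7 of \cite{GH}).

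Your sufficiency direction, however, has a genuine gap and also diverges from the paper's strategy in a way that matters. You assert that $L^2$-boundedness alone forces $[[T]]_\alpha\in BMO_{|\alpha|}$ for every $|\alpha|\leq L$, ``generalizing the classical $T1\in BMO$ fact,'' but this is not established in the paper, nor does the Carleson-measure argument in \cite{C,J} give it for $\alpha\neq0$: that argument produces the $BMO$ membership of $T1=[[T]]_0$ from the $L^2$ square-function bound, and there is no analogous automatic square-function bound for the weighted moments $[[\Lambda_k]]_\alpha$ entering $d\mu_\alpha$ when $|\alpha|\geq1$. Your reduction to Theorem~\ref{t:reducedCZbound} hinges on this unsupported claim. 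The second component of your plan---a family of moment-matching paraproducts $\Pi_\alpha$ with $[[\Pi_\alpha]]_\gamma=[[T]]_\alpha\delta_{\gamma\alpha}$---is an interesting idea but is not carried out in the paper and would require a nontrivial construction (the paper's $\Pi_\beta$ from \eqref{paraproduct} only controls $\Pi_\beta(1)$ and $\Pi_\beta^*(x^\gamma)$).

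The paper sidesteps both issues by subtracting a \emph{single} Bony paraproduct $\Pi$ with $\Pi(1)=T(1)$ and $\Pi^*(x^\gamma)=0$ for $|\gamma|\leq M$, so that $S=T-\Pi$ satisfies $S1=0$ and $S^*(x^\alpha)=0$ for $|\alpha|\leq L$, and then invokes the Frazier--Torres--Weiss theorem (Theorem 3.13 of \cite{FTW}, restated in the paper just before the proof), whose hypotheses are exactly $T1=0$ together with $T^*(x^\alpha)=0$---no $BMO_{|\alpha|}$ conditions on higher moments are needed. This uses only the classical $T1\in BMO$, which \emph{is} automatic from $L^2$-boundedness, and a single paraproduct. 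If you want to salvage your route, you would need to either prove the higher-order $BMO_{|\alpha|}$ membership from $L^2$-boundedness (unclear that this is even true) or abandon the reduction to Theorem~\ref{t:reducedCZbound} in favor of the \cite{FTW} endpoint result, as the paper does.
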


Note that Theorem \ref{t:reducedCZbound} is made obsolete by Theorem \ref{t:CZbound}.  We state Theorem \ref{t:reducedCZbound} separately since we will use it to prove the stronger Theorem \ref{t:CZbound}.  More precisely, we will prove Theorem \ref{t:reducedCZbound}, apply Theorem \ref{t:reducedCZbound} to prove $H^p$ bounds for Bony paraproducts in Theorem \ref{t:Bonyparaproduct}, and finally we will prove Theorem \ref{t:CZbound} with the help of Theorem \ref{t:Bonyparaproduct} and a result from \cite{T,FHJW,FTW}.  In this way, Theorems \ref{t:reducedCZbound}, \ref{t:Bonyparaproduct}, and \ref{t:CZbound} are proved in that order, with each depending on the previous results.

The rest of the article is organized as follows.  In Section 2, we establish some notation and preliminary results.  Section 3 is dedicated to Littlewood-Paley-Stein square functions and proving Theorem \ref{t:sqbound}.  In section 4, we prove the singular integral operator results in Theorems \ref{t:reducedCZbound} and \ref{t:CZtoLPS}.  In section 5, we apply Theorem \ref{t:CZbound} to the Bony paraproducts to prove Theorem \ref{t:Bonyparaproduct}.  In the last section, we use Theorem \ref{t:Bonyparaproduct} and a result from \cite{T,FHJW,FTW} to prove Theorem \ref{t:CZbound}.

\section{Preliminaries}\label{s:preliminaries}

We use the notation $A\less B$ to mean that $A\leq CB$ for some constant $C$.  The constant $C$ is allowed to depend on the ambient dimension, smoothness and decay parameters of our operators, indices of function spaces etc.; in context, the dependence of the constants is clear.  Recall that we define $\Phi_k^N(x)=2^{kn}(1+2^k|x|)^{-N}$.  It is easy to verify that $\Phi_k^N(x)\leq\Phi_k^{\widetilde N}(x)$ for $\widetilde N\leq N$, and it is well known that
\begin{align*}
\Phi_j^N*\Phi_k^N(x)\less\Phi_{\min(j,k)}^N(x).
\end{align*}
We will use these inequalities many times throughout this work without specifically referring to them.

We will use the following Frazier and Jawerth type discrete Calder\'on reproducing formula \cite{FJ} (see also \cite{HL} for a multiparameter formulation of this reproducing formula): there exist $\phi_j,\tilde\phi_j\in\S$ for $j\in\Z$ with infinite vanishing moment such that
\begin{align}
f(x)=\sum_{j\in\Z}\sum_{\ell(Q)=2^{-(j+N_0)}}|Q|\,\phi_j(x-c_Q)\tilde\phi_j*f(c_Q)\text{ in }L^2\label{discretereproducingformula}
\end{align}
for $f\in L^2$.  The summation in $Q$ here is over all dyadic cubes with side length $\ell(Q)=2^{-(j+N_0)}$, where $N_0$ is some large constant, and $c_Q$ denotes the center of cube $Q$.  Throughout this paper, we reserve the notation $\phi_j$ and $\tilde\phi_j$ for the operators constructed in this discrete Calder\'on decomposition.

We will also use a more traditional formulation of Calder\'on's reproducing formula:  fix $\varphi\in C_0^\infty(B(0,1))$ with integral $1$ such that 
\begin{align}
\sum_{k\in\Z} Q_kf=f\text{ in }L^2\label{reproducingformula}
\end{align}
for $f\in L^2$, where $\psi(x)=2^n\varphi(2x)-\varphi(x)$, $\psi_k(x)=2^{kn}\psi(2^kx)$, and $Q_kf=\psi_k*f$.  Furthermore, we can assume that $\psi$ has an arbitrarily large, but fixed, number of vanishing moments.  Again we will reserve the notation $\psi_k$ and $Q_k$ for convolution operators with convolution kernels in $\mathcal D_M$ for some $M\geq0$.  For this work, the most important difference between the functions $\psi$ and $\phi$ is that $\psi$ is compactly supported, while $\phi$ is necessarily not compactly supported.  We will use formula \eqref{discretereproducingformula} to decompose square functions and formula \eqref{reproducingformula} to decompose Calder\'on-Zygmund operators.

There are many equivalent definitions of the real Hardy spaces $H^p=H^p(\R^n)$ for $0<p<\infty$.  We use the following one.  Define the non-tangential maximal function 
\begin{align*}
\mathcal N^\varphi f(x)=\sup_{t>0}\sup_{|x-y|\leq t}\left|\int_{\R^n}t^{-n}\varphi(t^{-1}(y-u))*f(u)du\right|,
\end{align*}
where $\varphi\in\S$ with non-zero integral.  It was proved by Fefferman and Stein in \cite{FS2} that one can define $||f||_{H^p}=||\mathcal N^\varphi f||_{L^p}$ to obtain the classical real Hardy spaces $H^p$ for $0<p<\infty$.  It was also proved in \cite{FS2} that for any $\varphi\in\S$ and $f\in H^p$ for $0<p<\infty$,
\begin{align*}
\left|\left|\sup_{k\in\Z}|\varphi_k*f|\right|\right|_{L^p}\less||f||_{H^p}.
\end{align*}
We will use a number of equivalent semi-norms for $H^p$.  Let $\psi\in\mathcal D_M$ for some integer $M> n(1/p-1)$, and let $\psi_k$ and $Q_k$ be as above, satisfying \eqref{reproducingformula}.  For $f\in \S'/\mathcal P$ (tempered distributions modulo polynomials), $f\in H^p$ if and only if 
\begin{align*}
\left|\left|\(\sum_{k\in\Z}|Q_kf|^2\)^\frac{1}{2}\right|\right|_{L^p}<\infty,
\end{align*}
and this quantity is comparable to $||f||_{H^p}$.  The space $H^p$ can also be characterized by the operators $\phi_j$ and $\tilde\phi_j$ from the discrete Littlewood-Paley-Stein decomposition in \eqref{discretereproducingformula}.  This characterization is given by the following, which can be found in \cite{HL,LZ}.  Given $0<p<\infty$
\begin{align*}
\left|\left|\(\sum_{j\in\Z}\sum_{\ell(Q)=2^{-(j+N_0)}}|\tilde\phi_j*f(c_Q)|^2\chi_Q\)^\frac{1}{2}\right|\right|_{L^p}\approx||f||_{H^p},
\end{align*}
where $\chi_E(x)=1$ for $x\in E$ and $\chi_E(x)=0$ for $x\notin E$ for a subset $E\subset\R^n$.  The summation again is indexed by all dyadic cubes $Q$ with side length $\ell(Q)=2^{-(j+N_0)}$  For a continuous function $f:\R^n\rightarrow\C$ and $0<r<\infty$, define
\begin{align}
&\mathcal M_{j}^{r}f(x)=\left\{\mathcal M\[\(\sum_{\ell(Q)=2^{-(j+N_0)}}f(c_Q)\chi_Q\)^r\;\](x)\right\}^\frac{1}{r},\label{defnMr}
\end{align}
where $\mathcal M$ is the Hardy-Littlewood maximal operator.  The following estimate was also proved in \cite{HL}.

\begin{proposition}\label{p:Hpmaximal}
For any $\nu>0$, $\frac{n}{n+\nu}<r<p\leq1$, and $f\in H^p$
\begin{align*}
\left|\left|\(\sum_{j\in\Z}\(\mathcal M_j^{r}(\tilde\phi_j*f)\)^2\)^\frac{1}{2}\right|\right|_{L^p}\less||f||_{H^p},
\end{align*}
where $\mathcal M_j^{r}$ is defined as in \eqref{defnMr}.
\end{proposition}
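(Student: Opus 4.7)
The plan is to reduce the left-hand side to the discrete Littlewood-Paley square function characterization of $H^p$ recorded in the preliminaries, via the Fefferman-Stein vector-valued maximal inequality. The key structural fact is that the dyadic cubes $\{Q:\ell(Q)=2^{-(j+N_0)}\}$ of a fixed generation partition $\R^n$, so sums of their indicator functions behave well under pointwise powers. The range constraints $r<p\leq 1<2$ are exactly what is needed to apply the Fefferman-Stein inequality in $L^{p/r}$ with $\ell^{2/r}$; the parameter $\nu$ enters only through the regularity/vanishing moment requirements behind the discrete $H^p$ characterization with the test functions $\tilde\phi_j$.

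First I would set
\begin{align*}
g_j(x)=\sum_{\ell(Q)=2^{-(j+N_0)}}|\tilde\phi_j*f(c_Q)|^r\chi_Q(x).
\end{align*}
By disjointness of cubes of a fixed generation, the definition \eqref{defnMr} gives $\mathcal M_j^r(\tilde\phi_j*f)(x)=(\mathcal Mg_j(x))^{1/r}$. Using the elementary identity $||h^{1/r}||_{L^p}=||h||_{L^{p/r}}^{1/r}$ valid for non-negative $h$, the left-hand side of the claim rewrites as
\begin{align*}
\left|\left|\(\sum_{j\in\Z}\(\mathcal M_j^r(\tilde\phi_j*f)\)^2\)^\frac{1}{2}\right|\right|_{L^p}=\left|\left|\(\sum_{j\in\Z}(\mathcal Mg_j)^{2/r}\)^{r/2}\right|\right|_{L^{p/r}}^{1/r}.
\end{align*}

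Since $r<p\leq 1<2$, both $p/r>1$ and $2/r>1$, so the Fefferman-Stein vector-valued maximal inequality, applied with Lebesgue exponent $p/r$ and sequence exponent $2/r$, yields
\begin{align*}
\left|\left|\(\sum_{j\in\Z}(\mathcal Mg_j)^{2/r}\)^{r/2}\right|\right|_{L^{p/r}}\less\left|\left|\(\sum_{j\in\Z}g_j^{2/r}\)^{r/2}\right|\right|_{L^{p/r}}.
\end{align*}
Using disjointness of cubes once more, $g_j^{2/r}=\sum_{\ell(Q)=2^{-(j+N_0)}}|\tilde\phi_j*f(c_Q)|^2\chi_Q$, so applying the same identity in reverse converts the right-hand side (after taking the $1/r$ power) to
\begin{align*}
\left|\left|\(\sum_{j\in\Z}\sum_{\ell(Q)=2^{-(j+N_0)}}|\tilde\phi_j*f(c_Q)|^2\chi_Q\)^\frac{1}{2}\right|\right|_{L^p},
\end{align*}
which is comparable to $||f||_{H^p}$ by the discrete Littlewood-Paley characterization associated with the reproducing formula \eqref{discretereproducingformula}.

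The whole argument is in essence an exponent bookkeeping exercise that aligns everything with the Fefferman-Stein setup; there is no singular integral estimate, cancellation manipulation, or geometric decomposition to execute. I expect the only real subtlety is ensuring that the discrete $H^p$ characterization is valid on the indicated range of $r$ and $p$, which is precisely the role played by the vanishing moment/regularity parameter $\nu$ and the choice of $\tilde\phi_j$ in the reproducing formula.
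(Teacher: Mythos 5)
Your proof is correct: disjointness of fixed-generation dyadic cubes turns $\mathcal M_j^r$ into $(\mathcal M g_j)^{1/r}$, the rescaling identity $\|h^{1/r}\|_{L^p}=\|h\|_{L^{p/r}}^{1/r}$ moves everything into $L^{p/r}(\ell^{2/r})$ with both exponents strictly greater than $1$ (using $r<p\leq 1<2$), and the Fefferman--Stein vector-valued maximal inequality then reduces the estimate to the discrete Littlewood--Paley characterization of $H^p$ stated in Section~2. The paper does not give its own proof of Proposition~\ref{p:Hpmaximal} (it cites \cite{HL}), but your argument is the standard one and is surely what underlies that reference. One small remark on attribution of hypotheses: the lower bound $r>n/(n+\nu)$ is never used in this argument (the discrete $H^p$ characterization is stated for all $0<p<\infty$ independently of $\nu$); that constraint is what drives Proposition~\ref{p:discretebound}, and the two propositions are simply stated with matching hypotheses because they are used together in Lemma~\ref{l:linearsqbound}.
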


%\begin{proof}
%Let $N>n$, $\frac{n}{N}<r<p\leq1$, and $f\in H^p$.  Then using the definition of $\mathcal M_j^{r,N}$ in \eqref{defnMr} and applying the Fefferman-Stein vector-valued maximal inequality from \cite{FS2} with $p/r>1$ and $2/r>1$ gives
%\begin{align*}
%&\left|\left|\(\sum_{j\in\Z}\(\mathcal M_j^{r,N}(\tilde\phi_j*f)\)^2\)^\frac{1}{2}\right|\right|_{L^p}\\
%&\hspace{2cm}=\left|\left|\(\sum_{j\in\Z}\left\{\mathcal M\[\(\sum_{\ell(Q)=2^{-(j+N)}}|\tilde\phi_j*f(c_Q)|\chi_Q\)^r\]\right\}^\frac{2}{r}\)^\frac{r}{2}\right|\right|_{L^\frac{p}{r}}^\frac{1}{r}\\
%&\hspace{2cm}\less\left|\left|\(\sum_{j\in\Z}\sum_{Q}|\tilde\phi_j*f(c_Q)|^2\chi_Q\)^\frac{1}{2}\right|\right|_{L^p}\less||f||_{H^p}.
%\end{align*}
%\end{proof}

The next result is a rehash of an estimate proved in \cite{HL}; their estimate was in the multiparameter setting, whereas the one here is the single parameter version.

\begin{proposition}\label{p:discretebound}
Let $f:\R^n\rightarrow\C$ a non-negative continuous function, $\nu>0$, and $\frac{n}{n+\nu}<r\leq 1$.  Then 
\begin{align*}
&\sum_{\ell(Q)=2^{-(j+N_0)}}|Q|\,\Phi_{\min(j,k)}^{n+\nu}(x-c_Q)f(c_Q)\less2^{\max(0,j-k)\nu}\mathcal M_{j}^{r}f(x)
\end{align*}
for all $x\in\R^n$, where $\mathcal M_j^{r}$ is defined in \eqref{defnMr} and the summation indexed by $\ell(Q)=2^{-(j+N_0)}$ is the sum over all dyadic cubes with side length $2^{-(j+N_0)}$ and $c_Q$ denotes the center of cube $Q$.  
\end{proposition}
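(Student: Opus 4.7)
The plan is to partition the dyadic cubes of side $2^{-(j+N_0)}$ into annuli at the natural scale $2^{-m}$ where $m=\min(j,k)$, control each annular piece by a Hardy--Littlewood maximal function via the standard $r\leq 1$ subadditivity trick, and then sum a geometric series. Write $F(y)=\sum_{\ell(Q)=2^{-(j+N_0)}} f(c_Q)\chi_Q(y)$, so that $\mathcal M_j^r f(x)=(\mathcal M(F^r)(x))^{1/r}$. For $i\geq 0$, let $A_i$ be the collection of cubes $Q$ with $|x-c_Q|\approx 2^{i-m}$ (lumping together cubes inside the central ball $|x-c_Q|<2^{1-m}$ as $A_0$). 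On $A_i$ one has $\Phi_m^{n+\nu}(x-c_Q)\less 2^{mn}2^{-i(n+\nu)}$, and all cubes of $A_i$ are contained in a ball $B_i$ of volume $|B_i|\less 2^{n(i-m)}$.

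For each fixed $i$, I would apply the inequality $\sum_Q a_Q\leq(\sum_Q a_Q^r)^{1/r}$, valid for $r\leq 1$ and non-negative $a_Q$. Writing $|Q_0|=2^{-(j+N_0)n}$ for the common cube volume and using disjointness of the cubes in $A_i$,
\begin{align*}
\sum_{Q\in A_i}|Q|f(c_Q)\leq |Q_0|^{1-1/r}\Bigl(\sum_{Q\in A_i}|Q|f(c_Q)^r\Bigr)^{1/r}\leq |Q_0|^{1-1/r}\Bigl(\int_{B_i}F^r\Bigr)^{1/r}\less |Q_0|^{1-1/r}|B_i|^{1/r}\,\mathcal M_j^r f(x).
\end{align*}

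Combining this with the pointwise bound on $\Phi_m^{n+\nu}$ and substituting the values of $|Q_0|$ and $|B_i|$, the left-hand side of the proposition is bounded by
\begin{align*}
\sum_{\ell(Q)=2^{-(j+N_0)}}|Q|\Phi_m^{n+\nu}(x-c_Q)f(c_Q)\less 2^{n(1/r-1)(j-m)}\Bigl(\sum_{i\geq 0}2^{i[n(1/r-1)-\nu]}\Bigr)\mathcal M_j^r f(x).
\end{align*}
The hypothesis $r>n/(n+\nu)$ is equivalent to $n(1/r-1)<\nu$, and this single inequality does double duty: it makes the common ratio negative so the geometric series in $i$ converges, and it gives $2^{n(1/r-1)(j-m)}\leq 2^{\nu(j-m)}=2^{\nu\max(0,j-k)}$ since $j-m=j-\min(j,k)=\max(0,j-k)$. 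The only real obstacle is this bookkeeping --- verifying that the one inequality $n(1/r-1)<\nu$ simultaneously yields the annular convergence and the correct growth factor $2^{\nu\max(0,j-k)}$; once the annular decomposition and the $r\leq 1$ subadditivity trick are set up, no further analytic input is needed.
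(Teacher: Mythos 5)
Your proof is correct, and it takes a genuinely different route from the paper's. You decompose the dyadic cubes into annuli at the scale $2^{-m}$ with $m=\min(j,k)$, which is the natural scale of the kernel $\Phi_m^{n+\nu}$; the paper instead decomposes at the cube scale, setting $A_\ell=\{Q:|x-c_Q|\approx 2^{\ell-(j+N_0)}\}$ for $\ell\geq 0$. Both arguments then run on the $\ell^r\hookrightarrow\ell^1$ inequality for $r\leq1$ together with a geometric series that converges precisely because $r>n/(n+\nu)$, but the bookkeeping is arranged differently. In the paper's version, the annulus $A_\ell$ contains $\gtrsim 2^{n\ell}$ cubes, and the discrete sum $\sum_{Q\in A_\ell}f(c_Q)^r$ is turned into a maximal-function average by the bounded-geometry lower bound $|\bigcup_{Q\in A_\ell}Q|\gtrsim |B(x,2^{\ell+1-(j+N_0)})|$; the growth factor $2^{\max(0,j-k)\nu}$ is extracted directly from the pointwise estimate for $\Phi_{\min(j,k)}^{n+\nu}$ on each annulus. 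In your version, the passage to an integral is immediate from exact disjointness of the cubes, $\sum_{Q\in A_i}|Q|f(c_Q)^r=\int_{\bigcup_{Q\in A_i}Q}F^r\leq\int_{B_i}F^r$, which dispenses with any cube-counting; the growth factor $2^{\max(0,j-k)\nu}$ instead surfaces in the scale mismatch $|Q_0|^{1-1/r}|B_i|^{1/r}$ and is then controlled by $n(1/r-1)<\nu$ together with $j-m=\max(0,j-k)\geq 0$. Your remark that the single hypothesis $n(1/r-1)<\nu$ does double duty, giving both the convergence of the geometric series in $i$ and the correct growth in $j-m$, is exactly the content of the lemma, and your arrangement makes it especially transparent.
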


\begin{proof}
Define
\begin{align*}
A_0&=\{Q\text{ dyadic}:\ell(Q)=2^{-(j+N_0)}\text{ and }|x-c_Q|\leq2^{-(j+N_0)}\}\\
A_\ell&=\{Q\text{ dyadic}:\ell(Q)=2^{-(j+N_0)}\text{ and }2^{\ell-1-(j+N_0)}<|x-c_Q|\leq2^{\ell-(j+N_0)}\}
\end{align*}
for $\ell\geq1$.  Now for each $Q\in A_0$
\begin{align*}
\Phi_{\min(j,k)}^{n+\nu}(x-c_Q)&=\frac{2^{\min(j,k)n}}{(1+2^{\min(j,k)}|x-c_Q|)^{n+\nu}}\leq2^{\min(j,k)n}\leq2^{jn},
\end{align*}
and for each $Q\in A_\ell$ when $\ell\geq1$ 
\begin{align*}
\Phi_{\min(j,k)}^{n+\nu}(x-c_Q)&=\frac{2^{\min(j,k)n}}{(1+2^{\min(j,k)}|x-c_Q|)^{n+\nu}}\leq\frac{2^{\min(j,k)n}}{(1+2^{\min(j,k)}(2^{\ell-1-(j+N_0)}))^{n+\nu}}\\
&\leq2^{\min(j,k)n}2^{-(n+\nu)\min(j,k)}2^{-(n+\nu)\ell+n+\nu+(n+\nu)(j+N_0)}\\
&\less2^{\max(0,j-k)\nu}2^{-(n+\nu)\ell}2^{jn}
\end{align*}
Since $\bigcup_\ell A_\ell$ makes up the collection of all dyadic cubes with side length $2^{-(j+N_0)}$, it follows that
\begin{align*}
\sum_{\ell(Q)=2^{-(j+N_0)}}|Q|\,\Phi_{\min(j,k)}^{n+\nu}(x-c_Q)f(c_Q)&=\sum_{\ell=0}^\infty\sum_{Q\in A_\ell}2^{-(j+N_0)n}\Phi_{\min(j,k)}^{n+\nu}(x-c_Q)f(c_Q)\\
&\hspace{0cm}\less\sum_{Q\in A_0}f(c_Q)+2^{\max(0,j-k)\nu}\sum_{\ell=1}^\infty2^{-\ell (n+\nu)}\sum_{Q\in A_\ell}f(c_Q)\\
&\hspace{0cm}\leq2^{\max(0,j-k)\nu}\sum_{\ell=0}^\infty2^{-\ell (n+\nu)}\(\sum_{Q\in A_\ell}f(c_Q)^r\)^\frac{1}{r}.
\end{align*}
For $Q\in A_\ell$ and $y\in Q$ it follows that 
\begin{align*}
|x-y|\leq|x-c_Q|+|y-c_Q|\leq2^{-(j+N_0)}+2^{\ell-(j-N_0)}\leq2^{\ell+1-(j+N_0)},
\end{align*}
Hence $\bigcup_{Q\in A_\ell}Q\subset B(x,2^{\ell+1-(j+N_0)})$.  We also have that $|A_\ell|\geq 2^{n(\ell-2)}$; so   
\begin{align*}
\left|\bigcup_{Q\in A_\ell}Q\right|\geq2^{-(j+N_0)n}2^{n(\ell-2)}=2^{-2n}2^{(\ell-(j+N_0))n}\geq|B(0,1)|^{-1}2^{-2n}|B(0,2^{\ell-(j+N_0)}|.
\end{align*}
Now we estimate the sum in $Q$ above:
\begin{align*}
\sum_{Q\in A_\ell}f(c_Q)^r&\leq\frac{1}{|\bigcup_{Q\in A_\ell}Q|}\int_{\bigcup_{Q\in A_\ell}Q}\chi_{\bigcup_{Q\in A_\ell}Q}(y)\sum_{Q\in A_\ell}f(c_Q)^rdy\\
&\leq\frac{1}{|\bigcup_{Q\in A_\ell}Q|}\int_{\bigcup_{Q\in A_\ell}Q}2^{(\ell+1)n}\sum_{Q\in A_\ell}f(c_Q)^r\chi_Q(y)dy\\
&\less\frac{2^{\ell n}}{|B(x,2^{\ell+1-(j+N_0)})|}\int_{B(x,2^{\ell+1-(j+N_0)})}\sum_{Q\in A_\ell}f(c_Q)^r\chi_{Q}(y)dy\\
&=\frac{2^{\ell n}}{|B(x,2^{\ell+1-(j+N_0)})|}\int_{B(x,2^{\ell+1-(j+N_0)})}\(\sum_{Q\in A_\ell}f(c_Q)\chi_{Q}(y)\)^rdy\\
&\less2^{\ell n}\mathcal M\[\(\sum_{Q\in A_\ell}f(c_Q)\chi_Q\)^r\;\](x).
\end{align*}
Then we have that
\begin{align*}
&\sum_{\ell(Q)=2^{-(j+N_0)}}|Q|\,\Phi_{\min(j,k)}^{n+\nu}(x-c_Q)f(c_Q)\\
&\hspace{2cm}\less2^{\max(0,j-k)\nu}\sum_{\ell=0}^\infty2^{-\ell (n+\nu-n/r)}\left\{\mathcal M\[\(\sum_{Q\in A_\ell}f(c_Q)\chi_Q\)^r\;\](x)\right\}^\frac{1}{r}\\
&\hspace{2cm}\less2^{\max(0,j-k)\nu}\left\{\mathcal M\[\(\sum_{\ell(Q)=2^{-(j+N)}}f(c_Q)\chi_Q\)^r\;\](x)\right\}^\frac{1}{r}.
\end{align*}
\end{proof}

We will also need some Carleson measure estimates for the result in Theorem \ref{t:sqbound}.  The next proof is a well known argument that can be found in \cite{C,J}.

\begin{proposition}\label{p:HpCarleson}
Suppose
\begin{align}
d\mu(x,t)=\sum_{k\in\Z}\mu_k(x)\delta_{t=2^{-k}}\,dx\label{discreteCarleson}
\end{align}
is a Carleson measure, where $\mu_k$ is a non-negative, locally integrable function for all $k\in\Z$.  Also let $\varphi\in\S$, and define $P_kf=\varphi_k*f$, where $\varphi_k(x)=2^{kn}\varphi(2^kx)$ for $k\in\Z$.  Then 
\begin{align*}
\left|\left|\(\sum_{k\in\Z}|P_kf|^p\mu_k\)^\frac{1}{p}\right|\right|_{L^p}\less||f||_{H^p}\hspace{.5cm}\text{ for all $0<p<\infty$}
\end{align*}
and
\begin{align*}
\left|\left|\(\sum_{k\in\Z}|P_kf|^2\mu_k\)^\frac{1}{2}\right|\right|_{L^p}\less||f||_{H^p}\hspace{.5cm}\text{ for all $0<p\leq2$}.
\end{align*}
\end{proposition}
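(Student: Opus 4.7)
The plan is to prove the two estimates in sequence. I will establish the first via a classical distribution-function/tent-set argument that turns the Carleson measure hypothesis into a bound by the $L^p$ norm of the non-tangential maximal function. Then I will deduce the second by a short pointwise interpolation inequality that reduces it to the first.

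For the first estimate I apply the layer-cake formula to write
\begin{align*}
\int_{\R^n}\sum_{k\in\Z}|P_kf(x)|^p\mu_k(x)\,dx = p\int_0^\infty \lambda^{p-1}\mu(E_\lambda)\,d\lambda,
\end{align*}
where $E_\lambda\subset\R^{n+1}_+$ consists of the pairs $(x,2^{-k})$ with $|\varphi_k*f(x)|>\lambda$ (so that $\mu(E_\lambda)$ agrees with the discrete measure in \eqref{discreteCarleson}). The pointwise observation is that if $(x,2^{-k})\in E_\lambda$, then for every $y\in B(x,2^{-k})$ the definition of the non-tangential maximal function forces $\mathcal N^\varphi f(y)\geq|\varphi_k*f(x)|>\lambda$, so $B(x,2^{-k})\subset O_\lambda:=\{\mathcal N^\varphi f>\lambda\}$; this places $(x,2^{-k})$ in the tent cover $\widehat O_\lambda:=\{(y,t):B(y,t)\subset O_\lambda\}$. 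A Whitney decomposition $O_\lambda=\bigcup_j Q_j$ with $\ell(Q_j)\approx\mathrm{dist}(Q_j,O_\lambda^c)$ then gives $\widehat O_\lambda\subset\bigcup_j Q_j\times(0,C\ell(Q_j))$, and the Carleson hypothesis yields $\mu(\widehat O_\lambda)\leq C\sum_j|Q_j|=C|O_\lambda|$. Substituting back and using the Fefferman--Stein identity $\|\mathcal N^\varphi f\|_{L^p}\approx\|f\|_{H^p}$ produces the first inequality.

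For the second estimate, assuming $0<p\leq 2$, I will begin with the elementary pointwise bound
\begin{align*}
\(\sum_{k\in\Z}|P_kf|^2\mu_k\)^{1/2}\leq \(\sup_{k\in\Z}|P_kf|\)^{(2-p)/2}\(\sum_{k\in\Z}|P_kf|^p\mu_k\)^{1/2},
\end{align*}
which holds because $a_k^2\leq(\sup_j a_j)^{2-p}a_k^p$ for non-negative $a_k$ and $p\leq 2$. Raising to the $p$-th power, integrating over $\R^n$, and applying H\"older with conjugate exponents $2/(2-p)$ and $2/p$ (the total exponent balances since $p(2-p)/2 + p^2/2 = p$) gives
\begin{align*}
\(\int_{\R^n}(\sup_k|P_kf|)^p\,dx\)^{(2-p)/2}\cdot \(\int_{\R^n}\sum_k|P_kf|^p\mu_k\,dx\)^{p/2}.
\end{align*}
The first factor is at most $\|\mathcal N^\varphi f\|_{L^p}^{p(2-p)/2}\lesssim\|f\|_{H^p}^{p(2-p)/2}$ by the pointwise domination $\sup_k|P_kf|\leq\mathcal N^\varphi f$, and the second factor is at most $\|f\|_{H^p}^{p^2/2}$ by the first estimate just proved. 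Multiplying them yields $\|f\|_{H^p}^p$, as required.

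The main obstacle is the tent inequality $\mu(\widehat O_\lambda)\lesssim|O_\lambda|$, which is the one place where the Carleson hypothesis \eqref{discreteCarleson} actually enters. The Whitney decomposition device is standard (see \cite{C,J}), but it requires some care to pin down the constant $C$ for which $y\in Q_j$ and $B(y,t)\subset O_\lambda$ force $t\leq C\ell(Q_j)$, so that each term in the cover $\widehat O_\lambda\subset \bigcup_j Q_j\times(0,C\ell(Q_j))$ is controlled by $|Q_j|$ via finitely many applications of the Carleson hypothesis on the base tents $Q\times(0,\ell(Q))$. Once this step is secured, the rest of the proof is bookkeeping.
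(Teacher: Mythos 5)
Your argument matches the paper's own proof essentially step for step: the layer-cake reduction to the tent inequality $\mu(\widehat O_\lambda)\lesssim|O_\lambda|$ for the first estimate, and the H\"older step with exponents $2/(2-p)$ and $2/p$ (combined with $\sup_k|P_kf|\leq\mathcal N^\varphi f$) to derive the second from the first. The only cosmetic difference is that you spell out the Whitney-decomposition proof of the tent inequality, which the paper simply cites as a well-known Carleson measure fact from \cite{C,J}.
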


\begin{proof}
Let $f\in H^p$, and we begin the proof of the the first estimate above by looking at
\begin{align*}
&\int_{\R^n}\sum_{k\in\Z}|P_kf(x)|^p\mu_k(x)dx\\
&\hspace{2cm}=p\int_0^\infty d\mu\(\left\{(x,t):\left|\int_{\R^n}t^{-n}\varphi(t^{-1}(x-y))f(y)dy\right|>\lambda\right\}\)\lambda^p\frac{d\lambda}{\lambda}.
\end{align*}
Define $E_\lambda=\{x:|\mathcal N^\varphi f(x)|>\lambda\}$, and it follows that 
\begin{align*}
\left\{(x,t):\left|\int_{\R^n}t^{-n}\varphi(t^{-1}(x-y))f(y)dy\right|>\lambda\right\}\subset\widehat E_\lambda,
\end{align*}
where $\widehat E=\{(x,t):B(x,t)\subset E\}$.  Therefore
\begin{align*}
\int_{\R^n}\sum_{k\in\Z}|P_kf(x)|^p\mu_k(x)dx&\leq p\int_0^\infty d\mu(\widehat E_\lambda)\lambda^p\frac{d\lambda}{\lambda}\\
&\less p\int_0^\infty | E_\lambda|\lambda^p\frac{d\lambda}{\lambda}=||\mathcal N^\varphi f||_{L^p}^p=||f||_{H^p}^p.
\end{align*}
Here we use that $d\mu(\widehat E)\less|E|$ for any open set $E\subset\R^n$, which is a well known estimate for Carleson measures.  In the case $p=2$, the second estimate coincides with the first and hence there is no more to prove.  When $0<p<2$, we set $r=\frac{2}{p}>1$ and then the H\"older conjugate of $r$ is $r'=\frac{2}{2-p}$.  Now applying the first estimate above, we finish the proof.
\begin{align*}
\int_{\R^n}\(\sum_{k\in\Z}|P_kf(x)|^2\mu_k(x)\)^\frac{p}{2}dx&\leq\int_{\R^n}\sup_k|P_kf(x)|^{(2-p)p/2}\(\sum_{k\in\Z}|P_kf(x)|^p\mu_k(x)\)^\frac{p}{2}dx\\
&\leq\left|\left|\(\mathcal N^\varphi f\)^{(2-p)p/2}\right|\right|_{L^{r'}}\left|\left|\(\sum_{k\in\Z}|P_kf(x)|^p\mu_k(x)\)^\frac{p}{2}\right|\right|_{L^r}\\
&=\left|\left|\mathcal N^\varphi f\right|\right|_{L^p}^\frac{p(2-p)}{2}\(\int_{\R^n}\sum_{k\in\Z}|P_kf(x)|^p\mu_k(x)dx\)^\frac{p}{2}\\
&\less||f||_{H^p}^{p(2-p)/2}||f||_{H^p}^{p^2/2}=||f||_{H^p}^p.
\end{align*}

\end{proof}

\section{Hardy Space Estimates for Square Functions}\label{s:sqbound0}

In this section we prove Theorem \ref{t:sqbound}.  To do this, we first prove a reduced version of the theorem.

\begin{lemma}\label{l:linearsqbound}
Assume $\{\Lambda_k\}\in LPSO(n+2L+2\delta,L+\delta)$ for some integer $L\geq0$ and $0<\delta\leq1$.  If $\Lambda_k(y^\alpha)=0$ for all $k\in\Z$ and $|\alpha|\leq L$, then $||S_\Lambda f||_{L^p}\less||f||_{H^p}$ for all $f\in H^p\cap L^2$ and $\frac{n}{n+L+\delta}<p\leq1$.
\end{lemma}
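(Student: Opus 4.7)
The plan is to apply the discrete Calder\'on reproducing formula \eqref{discretereproducingformula} to the $L^2$ expansion of $f\in H^p\cap L^2$, derive a pointwise almost-orthogonality bound for $\Lambda_k(\phi_j(\cdot-c_Q))$, and then pass to $L^p$ via the maximal-function characterization of $H^p$ from Proposition \ref{p:Hpmaximal}. The $L^2$-convergence of the reproducing formula, together with the fact that each $\Lambda_k$ is bounded on $L^2$ as a direct consequence of \eqref{sqker1}, makes all rearrangements of sums legitimate.

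The central ingredient is the pointwise estimate
\begin{align*}
|\Lambda_k(\phi_j(\cdot-c_Q))(x)|\less 2^{-|j-k|(L+\delta)}\,\Phi_{\min(j,k)}^{n+2L+2\delta}(x-c_Q),
\end{align*}
which I would prove by splitting into the two regimes. When $j\geq k$, I Taylor-expand $\lambda_k(x,\cdot)$ about $c_Q$ to order $L$: the polynomial part integrates against $\phi_j(\cdot-c_Q)$ to zero because $\phi_j$ has infinite vanishing moments, and the H\"older remainder \eqref{sqker3} paired with the Schwartz decay of $\phi_j$ at scale $2^{-j}$ produces the factor $2^{-(j-k)(L+\delta)}\Phi_k^{N}(x-c_Q)$. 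When $k\geq j$, I instead Taylor-expand $\phi_j(\cdot-c_Q)$ about $x$ to order $L$: each polynomial term carries an integral $\int\lambda_k(x,y)(y-x)^\alpha\,dy$, which is a linear combination of the moments $\int\lambda_k(x,y)y^\beta\,dy$ for $|\beta|\leq L$ and hence vanishes by hypothesis, while the remainder is handled using \eqref{sqker1} and the Schwartz-norm bounds on derivatives of $\phi_j$. The decay budget $N=n+2L+2\delta$ leaves enough room so that the $|y-x|^{L+\delta}$ factor coming from the remainder can be absorbed without destroying spatial decay.

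Given $p$ with $\frac{n}{n+L+\delta}<p\leq 1$, I pick $r$ with $\frac{n}{n+L+\delta}<r<p$ and then $\nu$ with $n(1/r-1)<\nu<L+\delta$, and apply Proposition \ref{p:discretebound} to the expansion of $\Lambda_k f$ to obtain
\begin{align*}
|\Lambda_k f(x)|\less\sum_{j\in\Z}2^{-|j-k|(L+\delta)+\max(0,j-k)\nu}\,\mathcal M_j^{r}(\tilde\phi_j*f)(x).
\end{align*}
By construction the exponent on the right is bounded by $-\epsilon|j-k|$ with $\epsilon=\min(L+\delta,\,L+\delta-\nu)>0$, so applying Cauchy--Schwarz in $j$ with summability $\sum_k 2^{-\epsilon|j-k|}\less 1$ and Fubini yields the pointwise bound
\begin{align*}
S_\Lambda f(x)^2=\sum_{k\in\Z}|\Lambda_k f(x)|^2\less\sum_{j\in\Z}\[\mathcal M_j^{r}(\tilde\phi_j*f)(x)\]^2.
\end{align*}
Taking $L^p$-norms and invoking Proposition \ref{p:Hpmaximal} with parameter $\nu$ closes the proof.

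The main obstacle is not the almost-orthogonality itself, which is a standard Taylor/moment-cancellation argument, but the careful parameter balancing at the summation stage. The naive choice $\nu=L+\delta$ in Proposition \ref{p:discretebound} would make the geometric loss $2^{(j-k)\nu}$ exactly cancel the decay from almost-orthogonality in the regime $j\geq k$, leaving no summability in that direction; the assumption $p>\frac{n}{n+L+\delta}$ is precisely what creates the slack needed to choose $\nu$ strictly less than $L+\delta$ while still satisfying $r>\frac{n}{n+\nu}$, and that strict inequality is what produces the honest exponential decay in $|j-k|$ that drives the final $\ell^2$-orthogonality argument.
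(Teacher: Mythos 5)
Your proof is essentially the paper's own: the same discrete Calder\'on reproducing formula, the same two Taylor-expansion/moment-cancellation estimates (one expanding $\lambda_k$ about $c_Q$ using the infinite vanishing moments of $\phi_j$, the other expanding $\phi_j$ about $x$ using the hypothesis $\Lambda_k(y^\alpha)=0$), combined into the almost-orthogonality bound $2^{-(L+\delta)|j-k|}\Phi_{\min(j,k)}^{n+\nu}(x-c_Q)$, followed by Proposition \ref{p:discretebound}, a Schur/Cauchy--Schwarz step in $j,k$, and Proposition \ref{p:Hpmaximal}. The only cosmetic difference is that you fix $r$ before $\nu$ while the paper fixes $\nu$ before $r$; both orderings yield admissible parameters, and your observation that $p>\frac{n}{n+L+\delta}$ is exactly what leaves room to take $\nu$ strictly below $L+\delta$ is precisely the point the paper is exploiting.
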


We call this a reduced version of Theorem \ref{t:sqbound} because we have strengthened the assumptions of from the Carleson measure estimates for \eqref{dmuCarleson} to the vanishing moment type assumption above; $\Lambda_k(y^\alpha)=0$ for $|\alpha|\leq L$.

\begin{proof}
Fix $\nu\in(n/p-n,L+\delta)$, which is possible since our assumption on $p$ implies that $\frac{n}{p}-n<L+\delta$.  Also fix $r\in(0,1)$ such that $\frac{n}{n+\nu}<r<p$.  Let $f\in H^{p}\cap L^2$, and we decompose
\begin{align*}
\Lambda_kf(x)&=\sum_{j\in\Z}\sum_{Q}|Q|\tilde\phi_{j}*f(c_Q)\Lambda_k\psi_{j}^{c_Q}(x)=\sum_{j\in\Z}\sum_{Q}|Q|\tilde\phi_{j}*f(c_{Q})\int_{\R^{n}}\lambda_k(x,y)\psi_{j}^{c_{Q}}(y)dy.
\end{align*}
The summation in $Q$ is over all dyadic cubes with side lengths $\ell(Q)=2^{-(j+N_0)}$.  Then we have the following almost orthogonality estimates
\begin{align*}
\left|\int_{\R^{n}}\lambda_k(x,y)\phi_{j}^{c_{Q}}(y)dy\right|&=\left|\int_{\R^{n}}\lambda_k(x,y)\(\phi_{j}^{c_{Q}}(y)-\sum_{|\alpha|\leq L}\frac{D^\alpha\phi_{j}^{c_{Q}}(x)}{\alpha!}(y-x)^\alpha\)dy\right|\\
&\hspace{-2cm}\less\int_{\R^{n}}\Phi_k^{n+2L+2\delta}(x-y)(2^{j}|x-y|)^{L+\delta}\(\Phi_{j}^{n+L+\delta}(y-c_{Q})+\Phi_{j}^{n+L+\delta}(x-c_{Q})\)dy\\
&\hspace{-2cm}\less2^{(L+\delta)(j-k)}\int_{\R^{n}}\Phi_k^{n+L+\delta}(x-y)\(\Phi_{j}^{n+L+\delta}(y-c_{Q})+\Phi_{j}^{n+L+\delta}(x-c_{Q})\)dy\\
&\hspace{-2cm}\less2^{(L+\delta)(j-k)}\Phi_{\min(j,k)}^{n+L+\delta}(x-c_{Q}).
\end{align*}
Also, using the vanishing moment properties of $\phi_{j}$, we have the following estimate,
\begin{align*}
\left|\int_{\R^{n}}\lambda_k(x,y)\phi_{j}^{c_{Q}}(y)dy\right|&=\left|\int_{\R^{n}}\(\lambda_k(x,y)-\sum_{|\alpha|\leq L}\frac{D_1^\alpha\lambda_k(x,c_{Q})}{\alpha!}(x-y)^\alpha\)\phi_{j}^{c_{Q}}(y)dy\right|\\
&\less\int_{\R^{n}}\Phi_{k}^{n+L+\delta}(x-y)(2^k|y-c_{Q}|)^{L+\delta}\Phi_{j}^{n+2L+2\delta}(y-c_{Q})dy\\
&\hspace{1.5cm}+\int_{\R^{n}}\Phi_{k}^{n+L+\delta}(x-c_{Q})(2^k|y-c_{Q}|)^{L+\delta}\Phi_{j}^{n+2L+2\delta}(y-c_{Q})dy\\
&\less2^{(L+\delta)(k-j)}\int_{\R^{n}}\Phi_{k}^{n+L+\delta}(x-y)\Phi_{j}^{n+L+\delta}(y-c_{Q})dy\\
&\hspace{1.5cm}+2^{(L+\delta)(k-j)}\int_{\R^{n}}\Phi_{k}^{n+L+\delta}(x-c_{Q})\Phi_{j}^{n+L+\delta}(y-c_{Q})dy\\
&\hspace{.1cm}\less2^{(L+\delta)(k-j)}\Phi_{\min(j,k)}^{n+L+\delta}(x-c_{Q}).
\end{align*}
Therefore
\begin{align*}
&\left|\int_{\R^{n}}\lambda_k(x,y)\phi_{j}^{c_{Q}}(y)dy\right|\less2^{-(L+\delta)|j-k|}\Phi_{\min(j,k)}^{n+\nu}(x-c_{Q}).
\end{align*}
Applying Proposition \ref{p:discretebound} yields
\begin{align*}
|\Lambda_kf(x)|&\less\sum_{j\in\Z}\sum_{Q}|Q|\tilde\phi_{j}*f(c_{Q})2^{-(L+\delta)|j-k|}\Phi_{\min(j,k)}^{n+\nu}(x-c_{Q})\\
&\less\sum_{j\in\Z}2^{-(L+\delta)|j-k|}2^{\nu\max(0,k-j)}\mathcal M_{j}^{r}(\tilde\phi_j*f)(x)\leq\sum_{j\in\Z}2^{-\epsilon|j-k|}\mathcal M_{j}^{r}(\tilde\phi_j*f)(x),
\end{align*}
where $\epsilon=L+\delta-\nu>0$; recall that these parameter are chosen such that $\nu<L+\delta$.  Applying Proposition \ref{p:Hpmaximal} to $\mathcal M_{j}^{r}(\tilde\phi_j*f)$ (recall that $r$ was chosen such that $\frac{n}{n+\nu}<r<p$) yields the appropriate estimate below,
\begin{align*}
||S_\Lambda f||_{L^p}&\less\left|\left|\(\sum_{k\in\Z}\[\sum_{j\in\Z}2^{-\epsilon|j-k|}\mathcal M_{j}^{r}(\tilde\phi_j*f)\]^2\)^\frac{1}{2}\right|\right|_{L^p}\\
&\less\left|\left|\(\sum_{j,k\in\Z}2^{-\epsilon|j-k|}\[\mathcal M_{j}^{r}(\tilde\phi_j*f)\]^2\)^\frac{1}{2}\right|\right|_{L^p}\less||f||_{H^p}.
\end{align*}
This completes the proof of Lemma \ref{l:linearsqbound}.
\end{proof}

Next we construct paraproducts to decompose $\Lambda_k$.  Fix an approximation to identity operator $P_kf=\varphi_k*f$, where $\varphi_k(x)=2^{kn}\varphi(2^kx)$ and $\varphi\in\S$ with integral $1$.  Define for $\alpha,\beta\in\N_0^n$ 
\begin{align*}
M_{\alpha,\beta}&=\left\{
\begin{array}{ll}
\ds{(-1)^{|\beta|-|\alpha|}\frac{\beta!}{(\beta-\alpha)!}\int_{\R^n}\varphi(y)y^{\beta-\alpha}dy}	&\alpha\leq \beta\\
\\
0																			&\alpha\not\leq\beta
\end{array}
\right..
\end{align*}
Here we say $\alpha\leq\beta$ for $\alpha=(\alpha_1,...,\alpha_n),\beta=(\beta_1,...,\beta_n)\in\N_0^n$ if $\alpha_i\leq\beta_i$ for all $i=1,...,n$.  It is clear that $|M_{\alpha,\beta}|<\infty$ for all $\alpha,\beta\in\N_0^n$ since $\varphi\in\S$.  Also note that when $|\alpha|=|\beta|$
\begin{align}
M_{\alpha,\beta}=\left\{\begin{array}{ll}
\beta!		&\alpha=\beta\\
0			&\alpha\neq\beta\text{ and }|\alpha|=|\beta|
\end{array}
\right..\label{Lalphabeta}
\end{align}
We consider the operators $P_kD^\alpha$ defined on $\S'$, where $D^\alpha$ is taken to get the distributional derivative acting on $\S'$.  Hence $P_kD^\alpha f(x)$ is well defined for $f\in\S'$ since $P_kD^\alpha f(x)=\<\varphi_k^x,D^\alpha f\>=(-1)^{|\alpha|}\<D^\alpha(\varphi_k^x),f\>$ and $D^\alpha(\varphi_k^x)\in \S$.  In fact, this gives a kernel representation for $P_kD^\alpha$; estimates for this kernel are addressed in the proof of Proposition \ref{p:paraproducts}.  We also have
\begin{align*}
[[P_kD^\alpha]]_{\beta}(x)&=2^{|\beta|k}\int_{\R^n}\varphi_k(x-y)\partial_y^\alpha((x-y)^\beta)dy
%&=2^{|\beta|k}(-1)^{|\beta|-|\alpha|}\frac{\beta!}{(\beta-\alpha)!}\int_{\R^n}\varphi_k(x-y)(x-y)^{\beta-\alpha}dy\\
%&=2^{k|\alpha|}(-1)^{|\beta|-|\alpha|}\frac{\beta!}{(\beta-\alpha)!}\int_{\R^n}\varphi(y)y^{\beta-\alpha}dy
=2^{k|\alpha|}M_{\alpha,\beta}.
\end{align*}
For $k\in\Z$, define
\begin{align}
\Lambda_k^{(0)}f(x)&=\Lambda_kf(x)-[[\Lambda_k]]_0(x)\cdot P_kf(x),\text{ and }\label{defnLambda0}\\
\Lambda_k^{(m)}f(x)&=\Lambda_k^{(m-1)}f(x)-\sum_{|\alpha|=m}(-1)^{|\alpha|}\frac{[[\Lambda_k^{(m-1)}]]_\alpha(x)}{\alpha!}\cdot 2^{-k|\alpha|}P_kD^\alpha f(x).\label{defnLambdam}
\end{align}
for $1\leq m\leq L$.  

\begin{proposition}\label{p:paraproducts}
Let $\{\Lambda_k\}\in LPSO(N,L+\delta)$, where $N=n+2L+2\delta$ for some integer $L\geq0$ and $0<\delta\leq1$, and assume that 
\begin{align}
d\mu_\alpha(x,t)=\sum_{k\in\Z}|[[\Lambda_k]]_{\alpha}(x)|^2\delta_{t=2^{-k}}\,dx\label{linearmomentcondition}
\end{align}
is a Carleson measure for all $\alpha\in\N_0^n$ such that $|\alpha|\leq L$.  Also let $\Lambda_k^{(m)}$ be as in as in \eqref{defnLambda0} and \eqref{defnLambdam} for $0\leq m\leq L$.  Then $\Lambda_k^{(m)}\in LPSO(N,L+\delta)$ for the same $N$, $L$, and $\delta$, and satisfy the following:
\begin{itemize}

\item[(1)] $[[\Lambda_k^{(m)}]]_\alpha=0$ for all $\alpha\in\N_0^n$ with $|\alpha|\leq m\leq L$.

\medskip

\item[(2)] $d\mu_m(x,t)$ is a Carleson measure for all $0\leq m\leq L$, where $d\mu_m$ is defined
\begin{align*}
d\mu_m(x,t)=\sum_{k\in\Z}\sum_{|\alpha|\leq L}|[[\Lambda_k^{(m)}]]_\alpha(x)|^2\delta_{t=2^{-k}}\,dx.
\end{align*}
\end{itemize}
\end{proposition}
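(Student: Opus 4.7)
The plan is to prove all three claims simultaneously by induction on $m$. Before the induction, I would record the uniform pointwise bound $|[[\Lambda_k^{(m)}]]_\alpha(x)|\leq C$ for $|\alpha|\leq L$ that follows from the size condition \eqref{sqker1}: since $N=n+2L+2\delta>n+|\alpha|$, the substitution $u=2^k(x-y)$ reduces the defining integral to $\int_{\R^n}|u|^{|\alpha|}(1+|u|)^{-N}du<\infty$. This uniform bound is the linchpin that lets the paraproduct correction remain inside the $LPSO$ class.

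For the $LPSO$ class containment, I would compute that the kernel of the correction operator $[[\Lambda_k^{(m-1)}]]_\alpha\cdot 2^{-k|\alpha|}P_kD^\alpha$ equals $[[\Lambda_k^{(m-1)}]]_\alpha(x)\cdot(\partial^\alpha\varphi)_k(x-y)$, using $2^{-k|\alpha|}\partial_x^\alpha\varphi_k=(\partial^\alpha\varphi)_k$. Since $\partial^\alpha\varphi\in\S$, one has $|(\partial^\alpha\varphi)_k(x-y)|\less\Phi_k^N(x-y)$; moreover $\partial_y^\beta(\partial^\alpha\varphi)_k(x-y)=\pm 2^{k|\beta|}(\partial^{\alpha+\beta}\varphi)_k(x-y)$ satisfies the same decay, and Schwartz regularity of $\partial^{L+\beta}\varphi$ gives the $|y-y'|^\delta$-H\"older estimate with the correct $2^{k(L+\delta)}$ scaling when $|\beta|=L$. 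Combined with the uniform bound on $[[\Lambda_k^{(m-1)}]]_\alpha$, this shows the correction kernel lies in $LPSO(N,L+\delta)$, and so does $\Lambda_k^{(m)}$.

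For the vanishing moments, the key algebraic identity is obtained by integrating by parts $|\gamma|$ times and applying $\partial_y^\gamma(x-y)^\beta=(-1)^{|\gamma|}\frac{\beta!}{(\beta-\gamma)!}(x-y)^{\beta-\gamma}$ when $\gamma\leq\beta$: this yields $[[P_kD^\gamma]]_\beta(x)=\pm 2^{k|\gamma|}M_{\gamma,\beta}$, with the sign matching the one in \eqref{defnLambdam}. Substituting into \eqref{defnLambdam} and taking $[[\cdot]]_\beta$, the factor $2^{k|\gamma|}$ cancels the normalizing $2^{-k|\gamma|}$. For $|\beta|<m$, induction gives $[[\Lambda_k^{(m-1)}]]_\beta=0$, and the coefficients $M_{\gamma,\beta}=0$ for $|\gamma|=m>|\beta|$ (since $\gamma\not\leq\beta$) kill the remaining sum. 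For $|\beta|=m$, identity \eqref{Lalphabeta} collapses the sum to the single $\gamma=\beta$ term, which exactly cancels $[[\Lambda_k^{(m-1)}]]_\beta(x)$. This careful bookkeeping of signs and constants is the main -- though routine -- obstacle.

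For the Carleson property, the same inductive structure yields a pointwise bound $|[[\Lambda_k^{(m)}]]_\alpha(x)|^2\leq C\sum_{|\gamma|\leq L}|[[\Lambda_k]]_\gamma(x)|^2$ with $C$ independent of $k$ and $x$: each iteration subtracts bounded multiples of $[[\Lambda_k^{(j-1)}]]_\gamma$, which by induction is itself a bounded linear combination of the original $[[\Lambda_k]]_\gamma$. Summing over $|\alpha|\leq L$ and comparing measures, $d\mu_m$ is dominated by a finite linear combination of the $d\mu_\gamma$ for $|\gamma|\leq L$, each Carleson by hypothesis, so $d\mu_m$ is Carleson as well.
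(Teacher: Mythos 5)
Your proof is correct and follows essentially the same route as the paper: establish the uniform bound $|[[\Lambda_k^{(m)}]]_\alpha(x)|\lesssim 1$ from the size estimate, use the kernel representation $(D^\alpha\varphi)_k$ to place the correction terms in $LPSO(N,L+\delta)$, prove (1) by induction on $m$ via the identity $[[P_kD^\alpha]]_\beta=\pm 2^{k|\alpha|}M_{\alpha,\beta}$ and the collapsing property \eqref{Lalphabeta}, and deduce (2) by bounding $[[\Lambda_k^{(m)}]]_\alpha$ pointwise by a fixed linear combination of the $[[\Lambda_k]]_\gamma$. The only cosmetic difference is that the paper formalizes the last step as a single inductive inequality $\sum_{|\alpha|\leq L}|[[\Lambda_k^{(m)}]]_\alpha|\leq(1+C_0)^{m+1}\sum_{|\alpha|\leq L}|[[\Lambda_k]]_\alpha|$, whereas you state the equivalent squared comparison directly.
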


\begin{proof}
Since $\{\Lambda_k\}\in LPSO(n+2L+2\delta,L+\delta)$, we know that $|[[\Lambda_k]]_\alpha(x)|\less1$ for all $|\alpha|\leq L$.  Then to verify that $\{\Lambda_k^{(m)}\}\in LPSO(n+2L+2\delta,L+\delta)$ for $0\leq m\leq L$, it is sufficient to show that $\{2^{-k|\alpha|}P_kD^\alpha\}\in LPSO(n+2L+2\delta,L+\delta)$ for all $\alpha\in\N_0^n$.  For $f\in \S'$, we have the following integral representation for $2^{-k|\alpha|}P_kD^\alpha f$, which was alluded to above,
\begin{align*}
2^{-k|\alpha|}P_kD^\alpha f(x)=(-1)^{|\alpha|}2^{-k|\alpha|}\<D^\alpha(\varphi_k^x),f\>&%=(-1)^{|\alpha|}2^{-k|\alpha|}\int_{\R^n}\partial_y^\alpha \[\varphi_k(x-y)\]f(y)dy\\=\int_{\R^n} (D^\alpha\varphi)_k(x-y)f(y)dy
=(-1)^{|\alpha|}(D^\alpha\varphi)_k*f(x).
\end{align*}
Since $\varphi\in \S$, it easily follows that $D^\alpha\varphi\in \S$ for all $\alpha\in\N_0^n$ and that $\{2^{-k|\alpha|}P_kD^\alpha\}\in LPSO(n+2L+2\delta,L+\delta)$.  Now we prove (1) by induction:  the $m=0$ case for (1) is not hard to verify
\begin{align*}
[[\Lambda_k^{(0)}]]_0=\Lambda_k1-[[\Lambda_k]]_0\cdot P_k1=[[\Lambda_k]]_0-[[\Lambda_k]]_0=0.
\end{align*}
Now assume that (1) holds for $m-1$, that is, assume $[[\Lambda_k^{(m-1)}]]_\alpha=0$ for all $|\alpha|\leq m-1$.  Then for $|\beta|\leq m-1$
\begin{align*}
[[\Lambda_k^{(m)}]]_\beta &=[[\Lambda_k^{(m-1)}]]_\beta -\sum_{|\alpha|=m}\frac{[[\Lambda_k^{(m-1)}]]_\alpha}{\alpha!}(-1)^{|\alpha|} M_{\alpha,\beta}=0.%\frac{[[\Lambda_k^{(m-1)}]]_\alpha }{\alpha!}\cdot 2^{-k|\alpha|}[[P_kD^\alpha ]]_\beta \\
%&\hspace{1cm}-\sum_{|\alpha|=m}\frac{[[\Lambda_k^{(m-1)}]]_\alpha }{\alpha!}\cdot 2^{-k|\alpha|}\int_{\R^n}\varphi_k(x-y)\partial_y^\alpha\( (y-x)^\beta\)=0.
\end{align*}
The first term here vanished by the inductive hypothesis.  The second term is zero since $|\beta|<m=|\alpha|$ and hence $M_{\alpha,\beta}=0$.  For $|\beta|=m$,
\begin{align*}
[[\Lambda_k^{(m)}]]_\beta &=[[\Lambda_k^{(m-1)}]]_\beta -\sum_{|\alpha|=m}\frac{[[\Lambda_k^{(m-1)}]]_\alpha}{\alpha!}(-1)^{|\alpha|} M_{\alpha,\beta}=[[\Lambda_k^{(m-1)}]]_\beta -[[\Lambda_k^{(m-1)}]]_\beta =0,
\end{align*}
where the sum collapses using \eqref{Lalphabeta}.  By induction, this verifies (1) for all $m\leq L$.  Given the Carleson measure assumption for $d\mu_\alpha(x,t)$ in \eqref{linearmomentcondition}, one can easily prove (2) if the following statement holds:  for all $0\leq m\leq L$
\begin{align}
\sum_{|\alpha|\leq L}|[[\Lambda_k^{(m)}]]_\alpha(x)|&\leq (1+C_0)^{m+1}\sum_{|\alpha|\leq L}|[[\Lambda_k]]_\alpha(x)|,\text{ where }C_0=\sum_{|\alpha|,|\beta|\leq L}|M_{\alpha,\beta}|.\label{induction}
\end{align}
We verify \eqref{induction} by induction.  For $m=0$, let $|\beta|\leq L$, and it follows that
\begin{align*}
[[\Lambda_k^{(0)}]]_\beta &=[[\Lambda_k]]_\beta  -[[\Lambda_k]]_0 \cdot [[P_k]]_\beta =[[\Lambda_k]]_\beta  -[[\Lambda_k]]_0 \cdot M_{0,\beta}
\end{align*}
Then
\begin{align*}
\sum_{|\beta|\leq L}|[[\Lambda_k^{(0)}]]_\beta |&\leq\sum_{|\beta|\leq L}|[[\Lambda_k]]_\beta  |+\sum_{|\beta|\leq L}|[[\Lambda_k]]_0 ||M_{0,\beta}|\leq(1+C_0)\sum_{|\beta|\leq L}|[[\Lambda_k]]_\beta  |.
\end{align*}
Now assume that \eqref{induction} holds for $m-1$, and consider
\begin{align*}
\sum_{|\beta|\leq L}|[[\Lambda_k^{(m)}]]_\beta |&\leq\sum_{|\beta|\leq L}|[[\Lambda_k^{(m-1)}]]_\beta |+\sum_{|\beta|\leq L}\sum_{|\alpha|=m}|[[\Lambda_k^{(m-1)}]]_\alpha ||M_{\alpha,\beta}|\\
&\leq\(1+\sum_{|\alpha|\leq m,|\beta|\leq L}|M_{\alpha,\beta}|\)\sum_{|\beta|\leq L}|[[\Lambda_k^{(m-1)}]]_\beta |\\
&\leq(1+C_0)\sum_{|\beta|\leq L}|[[\Lambda_k^{(m-1)}]]_\beta |\leq(1+C_0)^{m+1}\sum_{|\beta|\leq L}|[[\Lambda_k]]_\beta |.
\end{align*}
We use the inductive hypothesis in the last inequality here to bound the $[[\Lambda^{(m-1)}]]_\beta$.  Then by induction, the estimate in \eqref{induction} holds for all $0\leq m\leq L$, and completes the proof.
\end{proof}

Now we use Lemma \ref{l:linearsqbound} and the paraproduct operators $\Lambda_k^{(m)}$ along with Propositions \ref{p:HpCarleson} and \ref{p:paraproducts} to prove Theorem \ref{t:sqbound}.

\begin{proof}[Proof of Theorem \ref{t:sqbound}]
By density, it is sufficient to prove that $||S_\Lambda f||_{L^p}\less||f||_{H^p}$ for $f\in H^p\cap L^2$.  We bound $\Lambda_k$ in the following way using the definitions of $\Lambda_k^{(m)}$ in \eqref{defnLambda0} and \eqref{defnLambdam};
\begin{align*}
|\Lambda_k(x)f|&\leq|\Lambda_k1(x)\cdot P_kf(x)|+|\Lambda_k^{(0)}f(x)|\\
&\leq|\Lambda_k1(x)\cdot P_kf(x)|+|\Lambda_k^{(1)}f(x)|+\sum_{|\alpha|=1}|[[\Lambda_k^{(0)}]]_{\alpha}(x)|\,2^{-k|\alpha|}|P_kD^\alpha f(x)|\\
&\leq|\Lambda_k1(x)\cdot P_kf(x)|+|\Lambda_k^{(L)}f(x)|+\sum_{m=1}^L\sum_{|\alpha|=m}|[[\Lambda_k^{(m-1)}]]_{\alpha}(x)|\,2^{-k|\alpha|}|P_kD^\alpha f(x)|.
\end{align*}
By Propositions \ref{p:HpCarleson} and \ref{p:paraproducts}, it follows that
\begin{align*}
\left|\left|\(\sum_{k\in\Z}|\Lambda_k1 P_kf|^2\)^\frac{1}{2}\right|\right|_{L^p}+\sum_{m=1}^L\sum_{|\alpha|=m}\left|\left|\(\sum_{k\in\Z}|[[\Lambda_k^{(m-1)}]]_{\alpha}2^{-|\alpha|k} P_kD^\alpha f|^2\)^\frac{1}{2}\right|\right|_{L^p}\less||f||_{H^p}.
\end{align*}
Also by Lemma \ref{l:linearsqbound}, it follows that
\begin{align*}
\left|\left|\(\sum_{k\in\Z}|\Lambda_k^{(L)}f|^2\)^\frac{1}{2}\right|\right|_{L^p}\less||f||_{H^p}.
\end{align*}
Therefore $S_\Lambda$ can be extended to a bounded operator from $H^p$ into $L^p$.
\end{proof}

\section{Hardy Space Bounds for Singular Integral Operators}\label{s:BCZO}

In this section, we prove Theorem \ref{t:reducedCZbound}.  This is a reduced version of Theorem \ref{t:CZbound} in the sense that we have strengthened the assumptions on $T$, and hence obtain only a sufficient condition, not necessary.  We will apply Theorem \ref{t:sqbound} to prove Theorem \ref{t:reducedCZbound}.  In order to do so, we prove the decomposition result in Theorem \ref{t:CZtoLPS}.

\begin{proof}[Proof of Theorem \ref{t:CZtoLPS}]
Let $\psi\in\mathcal D_M$.  It is not hard to check that $T^*\psi_k^x(y)$ is the kernel of $Q_kT$, where $\psi_k^x(y)=\psi_k(y-x)$.  Also let $L=\lfloor M/2\rfloor$ and $\delta=(M-2L+\gamma)/2$.  We first verify \eqref{sqker1}-\eqref{sqker3} for $|x-y|>2^{3-k}$.  Assume that $|x-y|>2^{3-k}$.  Then for $|\alpha|\leq L$
\begin{align*}
|\partial_y^\alpha T^*\psi^x(y)|&=\left|\partial_y^\alpha\int_{\R^n}\(K(u,y)-\sum_{|\beta|\leq M}\frac{D_0^\beta K(x,y)}{\beta!}(u-x)^\beta\)\psi_k(u-x)du\right|\\
&=\left|\int_{\R^n}\(D_1^\alpha K(u,y)-\sum_{|\beta|\leq M}\frac{D_0^\beta D_1^\alpha K(x,y)}{\beta!}(u-x)^\beta\)\psi_k(u-x)du\right|\\
&\less\int_{\R^n}\frac{|x-u|^{M+\gamma}}{|x-y|^{n+|\alpha|+M+\gamma}}|\psi_k(u-x)|du\\
&\less\frac{2^{-k(M+\gamma)}}{(2^{-k}+|x-y|)^{n+|\alpha|+M+\gamma}}\int_{\R^n}|\psi_k(u-x)|du\\
&\less2^{|\alpha|k}\Phi_k^{n+M+|\alpha|+\gamma}(x-y)\leq2^{|\alpha|k}\Phi_k^{n+2L+2\delta}(x-y).
\end{align*}
If $M\geq1$, then this estimate holds for all $|\alpha|\leq L+1$.  In this case, the above estimate implies that \eqref{sqker3} also holds for $N=n+2L+2\delta$ and any $0<\delta\leq1$.  So it remains to verify \eqref{sqker3} for $M=1$, in which case $L=0$ and $\delta=\gamma/2$.  If $|y-y'|\geq2^{-k}$, then property \eqref{sqker3} easily follows from the estimate just proved with $\alpha=0$.  Otherwise we assume that $|y-y'|<2^{-k}$, and it follows that $|x-y'|\geq|x-y|-|y-y'|>|x-y|/2\geq2^{1-k}$.  Then
\begin{align*}
|T^*\psi^x(y)-T^*\psi^x(y')|&=\left|\int_{\R^n}\(K(u,y)- K(u,y')\)\psi_k(u-x)du\right|\\
&\hspace{0cm}=\left|\int_{\R^n}\( \(K(u,y)- K(u,y')\)-(K(x,y)- K(x,y'))\)\psi_k(u-x)du\right|\\
&\hspace{0cm}\leq\int_{\R^n}\sum_{|\beta|= 1}|D_0^\beta  K(\xi,y)-D_0^\beta  K(\xi,y')|\,|u-x|\,|\psi_k(u-x)|du\\
&\hspace{3cm}\text{ for some }\xi=cx+(1-c)u\text{ with }0<c<1\\
&\hspace{0cm}\less\int_{\R^n}\frac{|y-y'|^{\gamma}|x-u|}{|\xi-y|^{n+1+\gamma}}|\psi_k(u-x)|du\\
&\hspace{0cm}\less\frac{|y-y'|^{\gamma}2^{-k}}{(2^{-k}+|x-y|)^{n+1+\gamma}}=2^{\delta k}|y-y'|^{\delta}\Phi_k^{n+2L+2\delta}(x-y).
\end{align*}
Recall this is the situation where $M=1$, $L=0$, $\delta=\gamma/2$, and $|y-y'|\leq2^{-k}$, and hence in the last line $n+\gamma=n+2L+2\delta$ and $2^{\gamma k}|y-y'|^\gamma\leq 2^{\delta k}|y-y'|^\delta$.  This completes the proof of \eqref{sqker1}-\eqref{sqker3} for $|x-y|>2^{3-k}$.

When $|x-y|\leq 2^{3-k}$, we decompose $Q_kT$ further.  Let $\varphi\in C_0^\infty$ with integral $1$ such that $\widetilde\psi(x)=2^{n}\varphi(2x)-\varphi(x)$ and $\widetilde\psi\in\mathcal D_M$.  Then
\begin{align}
T^*\psi_k^x(y)&=\lim_{N\rightarrow\infty}P_NT^*\psi_k^x(y)=\sum_{\ell=k}^\infty \widetilde Q_\ell T^*\psi_k^x(y)+P_kT^*\psi_k^x(y).\label{furtherdecomp}
\end{align}
This equality holds pointwise almost everywhere since $T$ is a continuous operator from $L^2$ to $L^2$ and $\psi_k^x\in\mathcal D_M$.  Note that $\widetilde\psi,\psi\in\mathcal D_M$, and it is only this property that will be used throughout the rest of this proof.  So we abuse notation to make this proof a bit easier to read.  For the remainder of the proof, we will simply write $\widetilde \psi_\ell=\psi_\ell$ and $\widetilde Q_\ell=Q_\ell$ with the understanding that these two can actually be allowed to be different elements of $\mathcal D_M$.  Let $\alpha\in\N_0^n$ with $|\alpha|\leq L$.  Using the hypothesis $T^*(x^\mu)=0$ for $|\mu|\leq L$ we write
\begin{align*}
&\left|D_1^\alpha\<T\psi_{\ell}^{y},\psi_k^x\>\right|\leq |A_{\ell,k}(x,y)|+|B_{\ell,k}(x,y)|,\text{ where}\\
&\hspace{.25cm}A_{\ell,k}(x,y)=2^{\ell|\alpha|}\int_{|u-y|\leq2^{1-\ell}}T((D^\alpha\psi)_{\ell}^{y})(u)\(\psi_k^x(u)-\sum_{|\alpha|\leq L}\frac{D^\alpha\psi_k^x(y_1)}{\alpha!}(u-y)^\alpha\)du,\\
&\hspace{.25cm}B_{\ell,k}(x,y)=2^{\ell |\alpha|}\int_{|u-y|>2^{1-\ell}}T((D^\alpha\psi)_{\ell}^{y})(u)\(\psi_k^x(u)-\sum_{|\alpha|\leq L}\frac{D^\alpha\psi_k^x(y)}{\alpha!}(u-y)^\alpha\)du.
\end{align*}
The $A_{\ell,k}$ term is bounded as follows,
\begin{align*}
|A_{\ell,k}(x,y)|&\leq2^{\ell |\alpha|}||T((D^\alpha\psi)_{\ell}^{y})\cdot\chi_{B(y,2^{1-\ell})}||_{L^1}\\
&\hspace{2cm}\times\left|\left|\(\psi_k^x(u)-\sum_{|\alpha|\leq L}\frac{D^\alpha\psi_k^x(y)}{\alpha!}(\cdot-y)^\alpha\)\cdot\chi_{B(y,2^{1-\ell})}\right|\right|_{L^\infty}\\
&\hspace{0cm}\less2^{\ell |\alpha|}2^{-\ell n/2}||T((D^\alpha\psi)_{\ell}^{y})||_{L^2}2^{(L+\delta)(k-\ell)}2^{kn}\\
&\hspace{0cm}\less2^{\ell |\alpha|}2^{-\ell n/2}||(D^\alpha\psi)_{\ell}^{y}||_{L^2}2^{(L+\delta)(k-\ell)}2^{kn}\less2^{k|\alpha|}2^{\delta(k-\ell)}\Phi_k^{n+2L+2\delta}(x-y).
\end{align*}
Let $0<\delta\,'<\delta\,''<\delta$.  The $B_{\ell,k}$ term is bounded using the kernel representation of $T$
\begin{align*}
|B_{\ell,k}(x,y)|&\leq2^{\ell|\alpha|}\int_{|u-y|>2^{1-\ell}}\int_{\R^n}\left|K(u,v)-\sum_{|\beta|\leq L}\frac{D_1^\beta K(u,y)}{\beta!}(v-y)^\beta\right||(D^\alpha\psi)_\ell^y(u) |dv\\
&\hspace{4.5cm}\times\left|\psi_k^x(u)-\sum_{|\mu|\leq L}\frac{D^\mu(\psi_k^x)(y)}{\mu!}(u-y)^\mu\right|du
\end{align*}
\begin{align*}
&\hspace{-1.2cm}\less2^{\ell |\alpha|}\sum_{m=1}^\infty\int_{2^{m-\ell}<|u-y|\leq 2^{m+1-\ell}}\int_{\R^{n}}\frac{|v-y|^{L+\delta}}{|u-y|^{n+L+\delta}}|(D^\alpha\psi)_{\ell}^{y}(v)|dv2^{kn}(2^k|u-y|)^{L+\delta''}du\\
&\hspace{-1.2cm}\less2^{\ell |\alpha|}\sum_{m=1}^\infty\int_{2^{m-\ell}<|u-y|\leq 2^{m+1-\ell}}\int_{\R^{n}}\frac{2^{-(L+\delta)\ell}}{2^{(n+L+\delta)(m-\ell)}}|(D^\alpha\psi)_{\ell}^{y}(v)|dv 2^{kn}(2^k2^{m-\ell})^{L+\delta''}du\\
&\hspace{-1.2cm}\less2^{\ell |\alpha|}\sum_{m=1}^\infty 2^{(m-\ell)n}2^{-(L+\delta)\ell}2^{-(n+L+\delta)(m-\ell)}2^{kn}2^{(L+\delta\,'')(k+m-\ell)}\\
&\hspace{-1.3cm}\less2^{k |\alpha|}2^{(L-|\alpha|+\delta\,'')(k-\ell)}2^{kn}\sum_{m=1}^\infty 2^{(\delta\,''-\delta) m}\less2^{k |\alpha|}2^{\delta\,''(k-\ell)}\Phi_k^{n+2L+2\delta}(x-y).
\end{align*}
It is not crucial here that we took $\delta'<\delta''<\delta$, but this estimate will be used again later where our choice of $\delta'<\delta''$ will be important.  It follows that the kernel $T^*\psi_k^x(y)$ of $Q_kT$ satisfies
\begin{align*}
\left|\partial_y^\alpha T^*\psi_k^x(y)\right|&=2^{\ell|\alpha|}\left|\sum_{\ell>k}\<T((D^\alpha\psi)_{\ell}^{y}),\psi_k^x\>\right|\\
&\less2^{k|\alpha|}\sum_{\ell>k}2^{\delta\,''(k-\ell)}\Phi_k^{n+2L+2\delta}(x-y)\less2^{k|\alpha|}\Phi_k^{n+2L+2\delta}(x-y).
\end{align*}
This verifies that $T^*\psi_k^x(y)$ satisfies \eqref{sqker1} for $|x-y|\leq 2^{3-k}$.  We also verify the $\delta$-H\"older regularity estimate \eqref{sqker2} for $T^*\psi_k^x(y)$ with $\delta\,'$ in place of $\delta$:  let $\alpha\in\N_0^n$ with $|\alpha|=L$.  It trivially follows from the above estimate that
\begin{align*}
&\sum_{\ell\geq k:\;2^{-\ell}<|y-y'|}\left|\<D_1^\alpha T(\psi_{\ell}^{y}-\psi_{\ell}^{y'}),\psi_k^x\>\right|\\
&\hspace{1.3cm}\less\sum_{\ell\geq k:\;2^{-\ell}<|y-y'|}2^{\delta\,''(k-\ell)}(2^{\ell}|y-y'|)^{\delta\,'}2^{k|\alpha|}\(\Phi_k^{n+2L+2\delta}(x-y)+\Phi_k^{n+2L+2\delta}(x-y')\)\\
&\hspace{1.3cm}\less2^{k|\alpha|}\sum_{\ell\geq k\;2^{-\ell}<|y-y'|}2^{(\delta\,''-\delta\,')(k-\ell)}(2^k|y-y'|)^{\delta\,'}\(\Phi_k^{n+2L+2\delta}(x-y)+\Phi_k^{n+2L+2\delta}(x-y')\)\\
&\hspace{1.3cm}\less2^{k(|\alpha|+\delta')}|y-y'|^{\delta\,'}\(\Phi_k^{n+2L+2\delta}(x-y)+\Phi_k^{n+2L+2\delta}(x-y')\).
\end{align*}
On the other hand, for the situation where $|y-y'|\leq2^{-\ell}$, we consider
\begin{align*}
&\sum_{\ell\geq k:\;2^{-\ell}\geq|y-y'|}\left|\<D_1^\alpha T(\psi_{\ell}^{y}-\psi_{\ell}^{y'}),\psi_k^x\>\right|
\leq |A_{\ell,k}(x,y,y')|+|B_{\ell,k}(x,y,y')|,\\
\end{align*}
where
\begin{align*}
&A_{\ell,k}(x,y,y')=2^{\ell|\alpha|}\int_{|u-y|\leq2^{2-\ell}}T((D^\alpha\psi)_{\ell}^{y}-(D^\alpha\psi)_{\ell}^{y'})(u)\\
&\hspace{5cm}\times\(\psi_k^x(u)-\sum_{|\alpha|\leq L}\frac{D^\alpha\psi_k^x(y)}{\alpha!}(u-y)^\alpha\)du,\text{ and }\\
&B_{\ell,k}(x,y,y')=2^{\ell|\alpha|}\int_{|u-y|>2^{2-\ell}}T((D^\alpha\psi)_{\ell}^{y}-(D^\alpha\psi)_{\ell}^{y'})(u)\\
&\hspace{5cm}\times\(\psi_k^x(u)-\sum_{|\alpha|\leq L}\frac{D^\alpha\psi_k^x(y)}{\alpha!}(u-y)^\alpha\)du.
\end{align*}
The $A_{\ell,k}$ term is bounded as follows,
\begin{align*}
|A_{\ell,k}(x,y,y')|&\leq2^{\ell|\alpha|}||T((D^\alpha\psi)_{\ell}^{y}-(D^\alpha\psi)_{\ell}^{y'})\cdot\chi_{B(y,2^{1-\ell})}||_{L^1}\\
&\hspace{1.25cm}\times\left|\left|\(\psi_k^x(u)-\sum_{|\mu|\leq L}\frac{D^\mu\psi_k^x(y)}{\mu!}(u-y)^\mu\)\cdot\chi_{B(y,2^{1-\ell})}\right|\right|_{L^\infty}\\
&\hspace{0cm}\less2^{\ell |\alpha|}2^{-\ell n/2}||T((D^\alpha\psi)_{\ell}^{y}-(D^\alpha\psi)_{\ell}^{y'})||_{L^2}2^{(L+\delta)(k-\ell)}2^{kn}\\
&\hspace{0cm}\less2^{\ell|\alpha|}(2^\ell|y-y'|)^{\delta\,'}2^{(L+\delta)(k-\ell)}2^{kn}\\
&\hspace{0cm}\leq2^{k|\alpha|}2^{(\delta-\delta\,')(k-\ell)}(2^k|y-y'|)^{\delta\,'}\(\Phi_k^{n+2L+2\delta}(x-y)+\Phi_k^{n+2L+2\delta}(x-y')\).
\end{align*}
Recall the selection of $\delta\,''$ such that $0<\delta\,'<\delta\,''<\delta$.  The $B_{\ell,k}$ term is bounded using the kernel representation of $T$
\begin{align*}
|B_{\ell,k}(x,y,y')|&=2^{\ell|\alpha|}\left|\int_{|u-y|>2^{1-\ell}}\int_{\R^{n}}\(K(u,v)-\sum_{|\nu|\leq L}\frac{D_1^\nu K(u,y)}{\nu!}(v-y)^\nu\)\right.\\
&\hspace{1cm}\left.\times((D^\alpha\psi)_{\ell}^{y}(v)-(D^\alpha\psi)_\ell^{y'}(v))\(\psi_k^x(u)-\sum_{|\mu|\leq L}\frac{D^\mu\psi_k^x(y)}{\mu!}(u-y)^\mu\)du\,dv\right|\\
&\hspace{0cm}\less2^{\ell|\alpha|}\int_{|u-y|>2^{1-\ell}}\int_{\R^{n}}\frac{|v-y|^{L+\delta}}{|u-y|^{n+L+\delta}}\\
&\hspace{1cm}\times|(D^\alpha\psi)_{\ell}^{y}(v)-(D^\alpha\psi)_\ell^{y'}(v)|dv\,\,2^{kn}(2^k|u-y|)^{L+\delta''}du\\
&\hspace{0cm}\less2^{\ell|\alpha|}\sum_{m=1}^\infty\int_{2^{m-\ell}<|u-y|\leq2^{m+1-\ell}}\int_{\R^{n}}\frac{2^{-(L+\delta)\ell}}{2^{(n+L+\delta)(m-\ell)}}(2^{\ell}|y-y'|)^{\delta\,'}\\
&\hspace{1cm}\times\(\Phi_{\ell}^{n+1}(y-v)+\Phi_{\ell}^{n+1}(y'-v)\)dv\,2^{kn}(2^k|u-y|)^{L+\delta''}du\\
&\hspace{0cm}\less2^{\ell|\alpha|}\sum_{m=1}^\infty2^{n(m-\ell)}2^{-(L+\delta)\ell}2^{(n+L+\delta)(\ell-m)}(2^{\ell}|y-y'|)^{\delta\,'}2^{kn}2^{(L+\delta\,'')(k+m-\ell)}
\end{align*}
\begin{align*}
&\hspace{0cm}\less2^{k|\alpha|} 2^{(\ell-k)|\alpha|}2^{\delta'(\ell-k)}(2^{k}|y-y'|)^{\delta\,'}2^{kn}2^{(L+\delta\,'')(k-\ell)}\sum_{m=1}^\infty2^{(\delta\,''-\delta)m}\\
&\hspace{0cm}\less2^{k|\alpha|} (2^{k}|y-y'|)^{\delta\,'}2^{(\delta\,''-\delta\,')(k-\ell)}\(\Phi_k^{n+2L+2\delta}(x-y)+\Phi_k^{n+2L+2\delta}(x-y')\).
\end{align*}
It follows that
\begin{align*}
&\sum_{\ell=k}^\infty|A_{\ell,k}(x,y,y')|+|B_{\ell,k}(x,y,y')|\\
&\hspace{2cm}\less2^{k|\alpha|}(2^k|y-y'|)^{\delta'}\(\Phi_{k}^{n+2L+2\delta}(x-y)+\Phi_{k}^{n+2L+2\delta}(x-y')\)\sum_{\ell=k}^\infty2^{(\delta''-\delta')(k-\ell)}\\
&\hspace{2cm}\less2^{k|\alpha|}(2^k|y-y'|)^{\delta'}\(\Phi_{k}^{n+2L+2\delta}(x-y)+\Phi_{k}^{n+2L+2\delta}(x-y')\)
\end{align*}
We now check that $P_kT^*\psi_k^x(y)$, the second term from \eqref{furtherdecomp}, also satisfies the appropriate size and regularity estimates.  For all $\alpha\in\N_0^n$
\begin{align*}
|\partial_y^\alpha P_kT^*\psi_k^x(y)|&=2^{|\alpha|k}|\<T(D^\alpha\varphi)_k^y,\psi_k^x\>|\leq2^{|\alpha|k}||T||_{2,2}2^{kn}\less2^{|\alpha|k}\Phi_k^{n+2L+2\delta}(x-y).
\end{align*}
Here $||T||_{2,2}$ is the $L^2$ operator norm of $T$.  Therefore $T^*\psi_k^x(y)$ satisfies size and regularity properties \eqref{sqker1} and \eqref{sqker2} with $\delta'$ in place of $\delta$, and hence $\{Q_kT\}\in LPSO(n+2L+2\delta,L+\delta')$ for all $\delta'\in(0,\delta)$.  It is trivial now to note that for $\frac{n}{n+L+\delta}<p\leq1$, $T$ is bounded on $H^p$ if and only if $S_\Lambda$ is bounded from $H^p$ into $L^p$ since $||Tf||_{H^p}\approx ||S_\Lambda f||_{L^p}$ by the Littlewood-Paley-Stein characterization of $H^p$ in \cite{FS2}.
\end{proof}

\begin{lemma}\label{l:Txconditions}
Let $T\in CZO(M+\gamma)$ be bounded on $L^2$ and satisfy $T^*(x^\alpha)=0$ for all $|\alpha|\leq L=\lfloor M/2\rfloor$.  For $\psi\in\mathcal D_{M}$, define
\begin{align*}
d\mu_\psi(x,t)=\sum_{|\alpha|\leq L}\sum_{k\in\Z}|[[Q_kT]]_\alpha(x)|^2\delta_{t=2^{-k}}\,dx,
\end{align*}
where $Q_kf=\psi_k*f$ and $\psi_k(x)=2^{kn}\psi(2^kx)$.  If $[[T]]_\alpha\in BMO_{|\alpha|}$ for all $|\alpha|\leq L$, then $d\mu_\psi$ is a Carleson measure for any $\psi\in\mathcal D_{M+L}$.
\end{lemma}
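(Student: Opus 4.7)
The plan is to express $[[Q_kT]]_\alpha(x)$ as a linear combination of $\widetilde{Q}_k^{\alpha-\beta}[[T]]_\beta(x)$ for $\beta\leq\alpha$, where $\widetilde{Q}_k^{\alpha-\beta}$ is convolution with the rescaled kernel $z^{\alpha-\beta}\psi_k(z)$, and then invoke the $BMO_{|\beta|}$ hypothesis on each $[[T]]_\beta$ directly. The motivating algebraic identity is the binomial expansion
\begin{align*}
(x-y)^\alpha=\sum_{\beta\leq\alpha}\binom{\alpha}{\beta}(x-u)^{\alpha-\beta}(u-y)^\beta,
\end{align*}
which, when inserted into the definition $[[Q_kT]]_\alpha(x)=2^{k|\alpha|}\int\lambda_k(x,y)(x-y)^\alpha dy$ and combined with $\lambda_k(x,y)=\int\psi_k(x-u)\mathcal K(u,y)\,du$, should yield the desired decomposition after swapping the order of integration.

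First, I would make the decomposition rigorous. Using the definitions of $[[Q_kT]]_\alpha$ and of $T$ extended to $\mathcal O_L$, write
\begin{align*}
2^{-k|\alpha|}[[Q_kT]]_\alpha(x)=\lim_{R\to\infty}\int\!\!\int\psi_k(x-u)\mathcal K(u,y)\eta_R(y)(x-y)^\alpha\,dy\,du
\end{align*}
modulo polynomial corrections that are annihilated by $\psi_k$ (since $\psi\in\mathcal D_M$ has vanishing moments up to order $M\geq L\geq|\alpha|$). Inserting the binomial expansion, pulling the factor $(x-u)^{\alpha-\beta}$ outside the inner integral, and recognizing the inner limit as $[[T]]_\beta(u)$ tested against $\psi_k(x-\cdot)(x-\cdot)^{\alpha-\beta}$, we obtain
\begin{align*}
[[Q_kT]]_\alpha(x)=\sum_{\beta\leq\alpha}\binom{\alpha}{\beta}(-1)^{|\alpha-\beta|}2^{k|\beta|}\widetilde Q_k^{\alpha-\beta}[[T]]_\beta(x),
\end{align*}
where $\widetilde Q_k^{\alpha-\beta}f=\widetilde\psi^{\alpha-\beta}_k*f$ and $\widetilde\psi^{\gamma}(z)=z^\gamma\psi(-z)$ (or a sign-permuted variant). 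A direct moment computation shows that $\widetilde\psi^\gamma$ is compactly supported and its moments up to order $|\beta|$ vanish whenever $\psi$ has moments up to order $|\beta|+|\gamma|=|\alpha|\leq L\leq M$; hence $\widetilde\psi^\gamma\in\mathcal D_{|\beta|}$, which is exactly the class needed to test against $[[T]]_\beta\in BMO_{|\beta|}$.

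Squaring, applying Cauchy-Schwarz, and summing over $|\alpha|\leq L$ and $k\in\Z$ then gives
\begin{align*}
d\mu_\psi(x,t)\lesssim\sum_{|\beta|\leq L}\sum_{k\in\Z}2^{2k|\beta|}\bigl|\widetilde Q_k^{\beta}[[T]]_\beta(x)\bigr|^2\delta_{t=2^{-k}}\,dx,
\end{align*}
where I have relabeled the finitely many operators $\widetilde Q_k^{\alpha-\beta}$ for $\beta\leq\alpha$, $|\alpha|\leq L$, by $\widetilde Q_k^\beta$, all with test kernels in $\mathcal D_{|\beta|}$. Each summand on the right is a Carleson measure by the definition of $BMO_{|\beta|}$ applied to $[[T]]_\beta$, and a finite sum of Carleson measures is Carleson, completing the proof.

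The main obstacle will be the first step: rigorously justifying the binomial decomposition at the distributional level. The extension of $T$ to polynomials is defined through limits with cutoffs $\eta_R$ plus polynomial correction terms, so one must verify that the correction terms drop out upon pairing with $\psi_k^x$ (which they do because $\psi_k^x$ annihilates polynomials of degree $\leq M$) and that the limit $R\to\infty$ commutes with the outer $u$-integration. The necessary uniform tail control comes from the kernel decay $|\lambda_k(x,y)|\lesssim\Phi_k^{n+2L+2\delta}(x-y)$ established in Theorem \ref{t:CZtoLPS}, which ensures absolute convergence of $\int\lambda_k(x,y)(x-y)^\alpha dy$ since $|\alpha|\leq L<2L+2\delta$, combined with analogous uniform decay estimates on the Calder\'on-Zygmund kernel side for $(u-y)^\beta$ against $\eta_R$.
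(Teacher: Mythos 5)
Your proof is correct and follows essentially the same route as the paper: the same binomial expansion $(x-y)^\alpha=\sum_{\beta\le\alpha}\binom{\alpha}{\beta}(x-u)^{\alpha-\beta}(u-y)^\beta$ inserted into $\<\mathcal K,\psi_k^x(u)\eta_R(y)(x-y)^\alpha\>$, the same re-absorption of the $(x-u)$-powers into rescaled test kernels (the paper's $\psi^\beta(x)=(-1)^{|\beta|}\psi(x)x^\beta$ is your $\widetilde\psi^\gamma$ up to a sign/reflection), the same verification that these kernels lie in $\mathcal D_{|\beta|}$ because $\psi\in\mathcal D_{M+L}$ has enough vanishing moments, and the same Cauchy--Schwarz plus ``finite sum of Carleson measures is Carleson'' conclusion. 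The only differences from the paper are the relabeling of multi-indices (your $\beta$ is the paper's $\alpha-\beta$) and a slightly more explicit discussion of the distributional/cutoff technicalities, which the paper leaves implicit.
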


\begin{proof}
Assume that $[[T]]_\alpha\in BMO_{|\alpha|}$ for all $|\alpha|\leq L$.  Let $\psi\in\mathcal D_{M+L}$, and it follows that $\{Q_kT\}\in LPSO(L,\delta')$ for all $\delta'<\delta$, where $Q_kf$ is defined as above and $L=\lfloor M/2\rfloor$ and $\delta=(M-2L+\gamma)/2$.  We also define $Q_k^\beta f=\psi_k^\beta*f$, where $\psi^\beta(x)=(-1)^{|\beta|}\psi(x)x^\beta$.  It follows that $\psi^\beta\in\mathcal D_{M+L-|\beta|}$.  Now let $\alpha\in\N_0^n$ such that $|\alpha|\leq L$.  Note that for $\beta\leq\alpha$, it follows that $\psi^\beta\in\mathcal D_M$, and hence $\{Q_k^\beta T\}\in LPSO(n+2L+2\delta,L+\delta')$ for all $0<\delta'<\delta$ as well.   Then it follows that
\begin{align*}
[[Q_kT]]_{\alpha}(x)&=2^{|\alpha|k}\int_{\R^{n}}T^*\psi_k^x(y)(x-y)^\alpha dy\\
&\hspace{0cm}=\lim_{R\rightarrow\infty}2^{|\alpha|k}\int_{\R^{2n}}\mathcal K(u,y)\psi_k^x(u)\eta_R(y)(x-y)^\alpha du\,dy\\
&\hspace{0cm}=\lim_{R\rightarrow\infty}\sum_{\beta\leq\alpha}c_{\alpha,\beta}2^{|\alpha|k}\int_{\R^{2n}}\mathcal K(u,y)\psi_k^x(u)(x-u)^\beta(u-y)^{\alpha-\beta} du\,dy\\
&\hspace{0cm}=\lim_{R\rightarrow\infty}\sum_{\beta\leq\alpha}c_{\alpha,\beta}2^{(|\alpha|-|\beta|)k}\int_{\R^{2n}}\mathcal K(u,y)(\psi_k^\beta)^x(u)\eta_R(y)(u-y)^{\alpha-\beta} du\,dy\\
&\hspace{0cm}=\sum_{\beta\leq\alpha}c_{\alpha,\beta}2^{(|\alpha|-|\beta|)k}\<[[T]]_{\alpha-\beta},(\psi_k^\beta)^x\>.
\end{align*}
Let $Q\subset\R^n$ be a cube with side length $\ell(Q)$.  It follows that
\begin{align*}
\sum_{2^{-k}\leq\ell(Q)}\int_Q|[[Q_kT]]_{\alpha}(x)|^2dx&\leq\sum_{2^{-k}\leq\ell(Q)}\int_Q\(\sum_{\beta\leq\alpha}c_{\alpha,\beta}2^{(|\alpha|-|\beta|)k}\left|\<[[T]]_{\alpha-\beta},(\psi_k^\beta)^x\>\right|\)^2dx\\
&\less\sum_{\beta\leq\alpha}\sum_{2^{-k}\leq\ell(Q)}\int_Q2^{2(|\alpha|-|\beta|)k}\left|\<[[T]]_{\alpha-\beta},(\psi_k^\beta)^x\>\right|^2dx\less|Q|.
\end{align*}
The last inequality holds since $[[T]]_{\alpha-\beta}\in BMO_{|\alpha|-|\beta|}$ and $\psi_k^\beta\in\mathcal D_M\subset\mathcal D_{|\alpha|-|\beta|}$ for all $\beta\leq\alpha$.
\end{proof}

Motivated by the proof of Lemma \ref{l:Txconditions}, we pause for a moment to introduce an alternative testing condition to $[[T]]_\alpha\in BMO_{|\alpha|}$ in Theorem \ref{t:CZbound}.  The following proposition introduces a perturbation of the definition of $[[T]]_\alpha$ with necessary and sufficient conditions for $[[T]]_\alpha\in BMO_{|\alpha|}$ for $|\alpha|\leq L$.

\begin{proposition}\label{p:equiv}
Let $T\in CZO(M+\gamma)$ with $T^*(y^\alpha)=0$ for $|\alpha|\leq L$.  Then $[[T]]_\alpha\in BMO_{|\alpha|}$ for all $|\alpha|\leq L$ if and only if
\begin{align*}
d\mu_\psi(x,t)=\sum_{|\alpha|\leq L}\sum_{k\in\Z}2^{2k|\alpha|}|\<TG_\alpha^x,\psi_k^x\>|^2\delta_{t=2^{-k}}\,dx
\end{align*}
is a Carleson measure for all $\psi\in\mathcal D_{M+L}$, where $Q_kf=\psi_k*f$ and $G_\alpha^x(u)=(u-x)^\alpha$.
\end{proposition}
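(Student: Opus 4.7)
The plan hinges on the identity
\begin{align*}
2^{k|\alpha|}\<TG_\alpha^x,\psi_k^x\> = (-1)^{|\alpha|}[[Q_kT]]_\alpha(x),
\end{align*}
which follows from the kernel representation $T^*\psi_k^x(y)$ of $Q_kT$ established in the proof of Theorem \ref{t:CZtoLPS} together with the definition of $[[Q_kT]]_\alpha$. This identifies $d\mu_\psi$ with the Carleson measure studied in Lemma \ref{l:Txconditions}, so the forward direction is an immediate corollary of that lemma.

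For the reverse direction, I will invert the lower-triangular expansion
\begin{align*}
[[Q_kT]]_\alpha(x) = \sum_{\beta\leq\alpha}c_{\alpha,\beta}\,2^{(|\alpha|-|\beta|)k}\<[[T]]_{\alpha-\beta},(\psi_k^\beta)^x\>
\end{align*}
derived in the proof of Lemma \ref{l:Txconditions} (with $\psi^\beta(v) = (-1)^{|\beta|}\psi(v)v^\beta$ and nonzero binomial coefficients $c_{\alpha,\beta}$, in particular $c_{\alpha,0}=1$), and induct on $|\alpha|$. The base case $\alpha = 0$ reduces to $[[Q_kT]]_0(x) = \<[[T]]_0,\psi_k^x\>$, so the Carleson hypothesis directly yields $[[T]]_0 \in BMO_0$. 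For the inductive step with $|\alpha| = m$, assuming $[[T]]_\mu \in BMO_{|\mu|}$ for $|\mu|<m$, I would solve the expansion for $2^{k|\alpha|}\<[[T]]_\alpha,\psi_k^x\>$, square, and sum over $k$: the $[[Q_kT]]_\alpha$ term is Carleson by hypothesis, while each remaining $\beta \neq 0$ contribution has the form $2^{2(|\alpha|-|\beta|)k}|\<[[T]]_{\alpha-\beta},(\psi_k^\beta)^x\>|^2$, which is Carleson by the inductive hypothesis applied to $\psi^\beta \in \mathcal{D}_{M+L-|\beta|} \subset \mathcal{D}_{|\alpha-\beta|}$ (the inclusion uses $|\alpha|\leq L$, so that $M+L-|\beta| \geq |\alpha|-|\beta| = |\alpha-\beta|$).

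The main obstacle is the final conversion: the preceding argument verifies the Carleson condition for $[[T]]_\alpha$ only against the single family $\{\psi_k^x\}$ built from one specific $\psi \in \mathcal{D}_{M+L}$, whereas the definition of $BMO_{|\alpha|}$ nominally demands it for every $\psi' \in \mathcal{D}_{|\alpha|}$. This is resolved by the standard Calder\'on reproducing formula argument showing that the $BMO_M$-type Carleson characterization is independent of the admissible Littlewood-Paley kernel chosen, provided it has sufficient vanishing moments and decay; this applies here since $\psi \in \mathcal{D}_{M+L}$ has far more vanishing moments than the scale $|\alpha| \leq L$ requires. A minor bookkeeping point is that Lemma \ref{l:Txconditions} is stated as an implication, so one should verify in passing that its proof actually produces the identity above as an equality, which it does.
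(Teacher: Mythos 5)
Your argument is correct and close in spirit to the paper's, but it takes a slightly different route for the reverse direction. The key identity $2^{k|\alpha|}\langle TG_\alpha^x,\psi_k^x\rangle=(-1)^{|\alpha|}[[Q_kT]]_\alpha(x)$ that you use is right, so the forward direction does reduce immediately to Lemma \ref{l:Txconditions}. For the reverse, you invert the lower-triangular binomial expansion by induction on $|\alpha|$. The paper instead writes down the \emph{dual} binomial identity
\begin{align*}
2^{|\alpha|k}\langle[[T]]_{\alpha},\psi_k^x\rangle=\sum_{\beta\leq\alpha}c_{\alpha,\beta}\,2^{(|\alpha|-|\beta|)k}\langle TG_{\alpha-\beta}^x,(\psi_k^\beta)^x\rangle,
\end{align*}
obtained by expanding $(u-y)^\alpha=((u-x)+(x-y))^\alpha$ rather than $(x-y)^\alpha=((x-u)+(u-y))^\alpha$; this makes the two Carleson conditions directly comparable term-by-term in both directions, eliminating the induction. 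Your inductive version works and is not longer in any essential way, but the paper's symmetric pair of identities is tidier and makes the equivalence visibly two-sided. You also correctly flag the one genuine subtlety that the paper leaves implicit: the hypothesis and conclusion are Carleson conditions tested against different families of $\psi$'s (the hypothesis against $\mathcal D_{M+L}$, the conclusion nominally against $\mathcal D_{|\alpha|}$, and after the binomial expansion the test functions $\psi^\beta$ only land in $\mathcal D_{M+L-|\beta|}$). Both your proof and the paper's need the standard Calder\'on-reproducing-formula argument that the $BMO_M$-type Carleson characterization is insensitive to the particular admissible $\psi$ within the relevant moment/decay class; the paper's ``easily follows'' silently invokes this, while you name it explicitly, which is preferable. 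One small note: you are right that Lemma \ref{l:Txconditions} is stated as a one-way implication, but the exact identity $[[Q_kT]]_\alpha(x)=\sum_{\beta\leq\alpha}c_{\alpha,\beta}2^{(|\alpha|-|\beta|)k}\langle[[T]]_{\alpha-\beta},(\psi_k^\beta)^x\rangle$ appears verbatim inside its proof and is what you actually need, so nothing new has to be established there.
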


The quantity $\<TG_\alpha^x,\psi\>$ is very closely related to $\<[[T]]_\alpha,\psi\>$.  One can obtain the distribution $TG_\alpha^x$ by replacing $(u-y)^\alpha$ with $(x-y)^\alpha$ in the definition of $[[T]]_\alpha$.  This gives an alternative testing condition for $[[T]]_\alpha\in BMO_{|\alpha|}$ that could be convenient in some situations.

\begin{proof}
Similar to the proof of Lemma \ref{l:Txconditions}, it follows that
\begin{align*}
2^{|\alpha|k}\<TG_\alpha^x,\psi_k^x\>&=\lim_{R\rightarrow\infty}2^{|\alpha|k}\int_{\R^{2n}}\mathcal K(u,y)\psi_k^x(u)\eta_R(y)(x-y)^\alpha du\,dy\\
%&\hspace{0cm}=\sum_{\beta\leq\alpha}c_{\alpha,\beta}2^{|\alpha|k}\int_{\R^{2n}}\mathcal K(u,y)\psi_k^x(u)(x-u)^\beta(u-y)^{\alpha-\beta} du\,dy\\
%&\hspace{0cm}=\sum_{\beta\leq\alpha}c_{\alpha,\beta}2^{(|\alpha|-|\beta|)k}\int_{\R^{2n}}\mathcal K(u,y)(\psi_k^\beta)^x(u)(u-y)^{\alpha-\beta} du\,dy\\
&\hspace{0cm}=\sum_{\beta\leq\alpha}c_{\alpha,\beta}2^{(|\alpha|-|\beta|)k}\<[[T]]_{\alpha-\beta},(\psi_k^\beta)^x\>.
\end{align*}
Here $c_{\alpha,\beta}$ are binomial coefficients and are bounded uniformly for $|\alpha|,|\beta|\leq L$ depending on $L$.  Likewise we have that
\begin{align*}
2^{|\alpha|k}\<[[T]]_{\alpha},\psi_k^x\>%&=2^{|\alpha|k}\int_{\R^{2n}}\mathcal K(u,y)\psi_k^x(u)(u-y)^\alpha du\,dy\\
%&\hspace{0cm}=\sum_{\beta\leq\alpha}c_{\alpha,\beta}2^{|\alpha|k}\int_{\R^{2n}}\mathcal K(u,y)\psi_k^x(u)(u-x)^\beta(x-y)^{\alpha-\beta} du\,dy\\
%&\hspace{0cm}=\lim_{R\rightarrow\infty}\sum_{\beta\leq\alpha}c_{\alpha,\beta}2^{(|\alpha|-|\beta|)k}\int_{\R^{2n}}\mathcal K(u,y)(\psi_k^\beta)^x(u)\eta_R(y)(u-y)^{\alpha-\beta} du\,dy\\
&\hspace{0cm}=\sum_{\beta\leq\alpha}c_{\alpha,\beta}2^{(|\alpha|-|\beta|)k}\<TG_{\alpha-\beta}^x,(\psi_k^\beta)^x\>.
\end{align*}
Lemma \ref{p:equiv} easily follows.
\end{proof}

Finally we prove Theorem \ref{t:reducedCZbound}.

\begin{proof}[Proof of Theorem \ref{t:reducedCZbound}]
By density, it is sufficient to prove the appropriate estimates for $f\in H^p\cap L^2$.  Let $\psi\in\mathcal D_{M+L}$ such that Calder\'on's reproducing formula \eqref{reproducingformula} holds for $Q_kf=\psi_k*f$, where $L=\lfloor M/2\rfloor$.  By Theorem \ref{t:CZtoLPS}, it follows that $\{\Lambda_k\}=\{Q_kT\}\in LPSO(n+2L+\delta,L+\delta')$ for all $0<\delta'<\delta=(M-2L+\gamma)/2$.  So fix a $\delta'\in(0,\delta)$ close enough to $\delta$ so that $\frac{n}{n+L+\delta}<\frac{n}{n+L+\delta'}<p$.  By Lemma \ref{l:Txconditions}, it follows that
\begin{align*}
d\mu(x,t)=\sum_{k\in\Z}\sum_{|\alpha|\leq L}|[[Q_kT]]_\alpha(x)|^2dx\,\delta_{t=2^{-k}}
\end{align*}
is a Carleson measure.  By Theorems \ref{t:sqbound} and \ref{t:CZtoLPS}, it also follows that $S_\Lambda$ can be extended to a bounded operator from $H^p$ into $L^p$, and hence $T$ can be extended to a bounded operator on $H^p$.
\end{proof}

\section{An Application to Bony Type Paraproducts}

In this section, we apply Theorem \ref{t:CZbound} to show that the Bony paraproduct operators from \cite{B} are bounded on $H^p$, which was stated in Theorem \ref{t:Bonyparaproduct}.  Let $\psi\in\mathcal D_{L+1}$ for some $L\geq0$ and $\varphi\in C_0^\infty$.  Define $Q_kf=\psi_k*f$ and $P_kf=\varphi_k*f$.  For $\beta\in BMO$, recall the definition of $\Pi_\beta$ in \eqref{paraproduct}
\begin{align*}
\Pi_\beta f(x)&=\sum_{j\in\Z}Q_j\(Q_j\beta\cdot P_jf\)(x).
\end{align*}
It follows that $\Pi_\beta\in CZO(M+\gamma)$ for all $M\geq0$ and $0<\gamma\leq1$.  We will focus on the properties $T^*(x^\alpha)=0$ and $[[T]]_\alpha\in BMO_{|\alpha|}$ for $|\alpha|\leq L$.  Once we prove these two things, we obtain Theorem \ref{t:Bonyparaproduct} by applying Theorem \ref{t:CZbound}.  We first give the definition of the Fourier transform that we will use and prove a lemma that will be used to prove the Hardy space bounds for $\Pi_\beta$.  For $f\in L^1(\R^n)$ and $\xi\in\R^n$, define
\begin{align*}
\widehat f(\xi)=\mathcal F[f](\xi)=\int_{\R^n}f(x)e^{ix\cdot\xi}dx.
\end{align*}

\begin{lemma}\label{l:conv}
Let $\psi\in\mathcal D_{M+1}$ for some integer $M$, and $-M\leq s\leq M$.  Define $V(x)$ and $V_k(x)$ by $\widehat V(\xi)=|\xi|^s\cdot\widehat\psi(\xi)$ and $V_k(x)=2^{kn}V(2^kx)$.  Also define
\begin{align*}
T_Vf(x)=\sum_{k\in\Z}V_k*f(x).
\end{align*}
Then $T_V$ is bounded on $H^1$ and on $BMO$.
\end{lemma}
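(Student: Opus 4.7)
My plan is to show that $T_V$ is a convolution Calder\'on-Zygmund operator with mean-zero kernel, from which the classical $H^1\to H^1$ and $BMO\to BMO$ boundedness results follow. Since $T_V$ is a Fourier multiplier with symbol
\[
m(\xi)=\sum_{k\in\Z}\widehat V(2^{-k}\xi)=\sum_{k\in\Z}(2^{-k}|\xi|)^s\,\widehat\psi(2^{-k}\xi),
\]
the first step is to verify $L^2$-boundedness by showing $m\in L^\infty(\R^n\setminus\{0\})$. I would split the sum at the scale $2^k\sim|\xi|$. For scales with $2^{-k}|\xi|\geq1$, the Schwartz decay of $\widehat\psi$ makes the sum geometric. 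For $2^{-k}|\xi|<1$, the hypothesis $\psi\in\mathcal D_{M+1}$ gives $\widehat\psi(\eta)=O(|\eta|^{M+2})$ near zero, so each term is $\less(2^{-k}|\xi|)^{s+M+2}$; here the condition $s\geq -M$ (whence $s+M+2\geq2>0$) yields summability.

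Next I would verify the standard Calder\'on-Zygmund kernel estimates $|K(x)|\less|x|^{-n}$ and $|\nabla K(x)|\less|x|^{-n-1}$, where $K(x)=\sum_k V_k(x)$ is the convolution kernel of $T_V$. The key input is the decay $|V(y)|+|y||\nabla V(y)|\less(1+|y|)^{-n-\epsilon}$ for some $\epsilon>0$, extracted as follows: $\widehat V=|\xi|^s\widehat\psi$ is Schwartz at infinity (the factor $|\xi|^s$ is absorbed by the rapid decay of $\widehat\psi$ since $s\leq M$) and vanishes like $|\xi|^{s+M+2}$ at the origin. Since $s\geq -M$ forces $s+M+2\geq2$, sufficiently many derivatives of $\widehat V$ remain integrable near zero, yielding the required decay of $V$ and $\nabla V$ by Fourier inversion. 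A routine dyadic split of $K(x)=\sum_k 2^{kn}V(2^kx)$ into the scales $2^k|x|\leq1$ and $2^k|x|>1$ then gives the CZ bounds.

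For the cancellation, $\widehat V(0)=0$ since $\widehat V=O(|\xi|^{s+M+2})$ vanishes at the origin; hence $\int V\,dy=0$, giving $T_V 1=T_V^*1=0$ in $BMO$ (note that $T_V^*$ is convolution with $\widetilde V(x)=V(-x)$, which shares the same structure as $V$). Combining the CZ kernel estimates with this cancellation, $T_V$ extends to a bounded operator on $H^1$ via the atomic characterization of $H^1$. The $BMO$ boundedness then follows by duality applied to $T_V^*$, which enjoys the same properties.

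The main technical obstacle is the kernel estimate in the second step: because $|\xi|^s$ is singular at the origin for generic $s$, the decay of $V$ and $\nabla V$ must be extracted entirely from the compensating vanishing of $\widehat\psi$ to order $M+2$. The constraint $-M\leq s\leq M$ is precisely what ensures $\widehat V$ has both enough smoothness at $0$ (so that $V$ decays faster than $|y|^{-n-1}$) and enough decay at infinity for the dyadic summation in the kernel estimates to close.
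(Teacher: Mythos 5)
Your proposal is correct, and it is substantially more detailed than what the paper actually writes down.  The paper's entire proof consists of showing that the Fourier transform of the kernel $K(x)=\sum_k V_k(x)$ is uniformly bounded, namely
\begin{align*}
|\widehat K(\xi)|\leq\sum_{k\in\Z}(2^{-k}|\xi|)^s|\widehat\psi(2^{-k}\xi)|\less\sum_{k\in\Z}\min(2^{-k}|\xi|,2^k|\xi|^{-1})\less1,
\end{align*}
followed by the one-line declaration ``It follows that $T_V$ is bounded on $H^1$ and on $BMO$; see [FS2].''  (The paper also has a typo, writing $|\widehat\psi(2^{-k}\xi)|^2$ where the lemma's definition of $\widehat V$ gives only a single factor of $\widehat\psi$; this does not affect the argument.)  Of course an $L^\infty$ bound on the multiplier alone is not sufficient for $H^1$ or $BMO$ boundedness; the paper is implicitly relying on the reader to observe that $K$ is a standard Calder\'on--Zygmund convolution kernel.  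That observation is exactly what you make explicit: your Step 2 verifies the CZ kernel estimates $|K(x)|\less|x|^{-n}$, $|\nabla K(x)|\less|x|^{-n-1}$ from the decay of $V$ and $\nabla V$, and your Step 3 supplies the cancellation $\int V=0$.  This is the right thing to do, and it is a point worth dwelling on precisely because $V$ is generically \emph{not} Schwartz: $\widehat V(\xi)=|\xi|^s\widehat\psi(\xi)$ fails to be smooth at the origin unless $s$ is a nonnegative even integer, so the pointwise decay of $V$ genuinely has to be extracted from the compensating $O(|\xi|^{M+2})$ vanishing of $\widehat\psi$.  Your bookkeeping is correct: $D^\beta\widehat V(\xi)=O(|\xi|^{s+M+2-|\beta|})$ near $0$, and since $s\geq -M$ this is locally integrable for $|\beta|\leq n+1$, giving $|y^\beta V(y)|\less 1$ and hence $|V(y)|\less(1+|y|)^{-n-1}$ (similarly for $\nabla V$).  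One small remark: once one recalls $\psi\in\mathcal D_{M+1}\subset C_0^\infty$, so $\widehat\psi$ decays faster than any polynomial, the upper constraint $s\leq M$ is not actually needed anywhere in the argument (neither for the multiplier bound nor for the kernel decay); the paper needs it only because it records the decay of $\widehat\psi$ as $\min(|\xi|,|\xi|^{-1})^{M+1}$, capping the available decay at order $M+1$.  In short: same strategy (reduce to convolution CZ theory), but you fill in the kernel-estimate step that the paper leaves implicit, while the paper fills in only the $L^2$-boundedness step.
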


\begin{proof}
We verify this lemma by showing that the convolution kernel of $T_V$ has uniformly bounded Fourier transform.  The kernel of $T_V$ is
\begin{align*}
K(x)=\sum_{k\in\Z}V_k(x).
\end{align*}
Then
\begin{align*}
|\widehat K(\xi)|\leq\sum_{k\in\Z}|\widehat V(2^{-k}\xi)|&=\sum_{k\in\Z}(2^{-k}|\xi|)^s|\widehat \psi(2^{-k}\xi)|^2\\
&\less\sum_{k\in\Z}(2^{-k}|\xi|)^s\min(2^{-k}|\xi|,2^k|\xi|^{-1})^{M+1}\\
&\less\sum_{k\in\Z}\min(2^{-k}|\xi|,2^k|\xi|^{-1})\less1.
\end{align*}
Note that since $\psi\in\mathcal D_{M+1}$, it follows that $|\widehat\psi(\xi)|\leq\min(|\xi|,|\xi|^{-1})^{M+1}$.  It follows that $T_V$ is bounded on $H^1$ and on $BMO$; see \cite{FS2}.
\end{proof}

\begin{proof}[Proof of Theorem \ref{t:Bonyparaproduct}]
As remarked above, it is clear that $\Pi_\beta\in CZO(M+\gamma)$ for all $M\geq0$ and $0<\gamma\leq1$.  So it is enough to show that $T^*(x^\alpha)=0$ and $[[T]]_\alpha\in BMO_{|\alpha|}$ for $|\alpha|\leq L$.  For $f\in\mathcal D_L$, we check the first condition.
\begin{align*}
\<\Pi_\beta^*(x^\alpha),f\>&=\lim_{R\rightarrow\infty}\sum_{j\in\Z}\<Q_j\(Q_j\beta\cdot P_jf\),\eta_R\cdot x^\alpha\>\\
&=\lim_{R\rightarrow\infty}\sum_{j\in\Z}\int_{\R^n}Q_j\beta(u) P_jf(u)Q_j(\eta_R\cdot x^\alpha)(u)du\\
&=\sum_{j\in\Z}\int_{\R^n}Q_j\beta(u) P_jf(u)Q_j(x^\alpha)(u)du=0
\end{align*}
since $Q_j(x^\alpha)=0$ for $|\alpha|\leq L$.  We also verify the $BMO_{|\alpha|}$ conditions.  Let $|\alpha|\leq L$, and compute
%\begin{align*}
%\Pi_\beta f(u)&=\sum_{j\in\Z}\int_{\R^{n}}\psi_j(u-v)Q_j\beta(v) P_jf(v)dv\\
%&=\int_{\R^n}\(\sum_{j\in\Z}\int_{\R^{n}}\psi_j(u-v)Q_j\beta(v) \varphi_j(v-y)dv\)f(y)dy\\
%\end{align*}
\begin{align*}
\<[[\Pi_\beta]]_{\alpha},\psi_k^x\>&=\lim_{R\rightarrow\infty}\sum_{j\in\Z}\int_{\R^{2n}}\psi_j(u-v)Q_j\beta(v)\(\int_{\R^n} \varphi_j(v-y)(u-y)^\alpha \eta_R(y)dy\)\psi_k^x(u)dv\,du\\
&=\sum_{\mu\leq\alpha}c_{\alpha,\mu}\lim_{R\rightarrow\infty}\sum_{j\in\Z}\int_{\R^{2n}}\psi_j(u-v)Q_j\beta(v)\\
&\hspace{3.3cm}\times\(\int_{\R^n} \varphi_j(v-y)(u-v)^{(\mu)}(v-y)^{\alpha-\mu} \eta_R(y)dy\)\psi_k^x(u)dv\,du\\
&=\sum_{\mu\leq\alpha}c_{\alpha,\mu}C_{\alpha-\mu}\sum_{j\in\Z}2^{-|\alpha|j}\int_{\R^{2n}}\psi_j^{(\mu)}(u-v)Q_j\beta(v)\psi_k^x(u)dv\,du\\
&=\sum_{\mu\leq\alpha}c_{\alpha,\mu}C_{\alpha-\mu}\sum_{j\in\Z}2^{-|\alpha|j}Q_kQ_j^{(\mu)} Q_j\beta(x),
\end{align*}
%\begin{align*}
%\<[[\Pi_\beta]]_{\alpha},\psi_k^x\>&=\lim_{R\rightarrow\infty}\sum_{j\in\Z}\int_{\R^{n}}Q_j\beta(u)\(\int_{\R^n} \varphi_j(u-y)(u-y)^\alpha \eta_R(y)dy\)Q_j\psi_k^x(u)du\\
%&=C_\alpha\sum_{j\in\Z}2^{-|\alpha|j}Q_kQ_j^2\beta(x),\hspace{.1cm}\text{ where }\hspace{.4cm}C_\alpha=\int_{\R^n}\varphi(y)y^\alpha dy.
%\end{align*}
where $\psi^{(\mu)}(x)=x^\mu\,\psi(x)$, $\psi_j^{(\mu)}(x)=2^{jn}\psi^{(\mu)}(2^jx)$, and $Q_j^{(\mu)}f(x)=\psi_j^{(\mu)}*f(x)$.  Now we consider
\begin{align*}
2^{|\alpha|(k-j)}\mathcal F\[Q_kQ_j^{(\mu)} Q_jf\](\xi)&=2^{|\alpha|(k-j)}\widehat\psi(2^{-k}\xi)\widehat{\psi^{(\mu)}}(2^{-j}\xi)\widehat{\psi}(2^{-j}\xi) \widehat f(\xi)\\
&=\((2^{-k}|\xi|)^{-|\alpha|}\widehat\psi(2^{-k}\xi)\)\((2^{-j}|\xi|)^{|\alpha|}\widehat{\psi^{(\mu)}}(2^{-j}\xi)\widehat{\psi}(2^{-j}\xi)\) \widehat f(\xi)\\
&=\mathcal F\[W_k*V_j*f\](\xi),
\end{align*}
where $W$ and $V$ are defined by $\widehat W(\xi)=|\xi|^{-|\alpha|}\widehat\psi(\xi)$, $\widehat {V^{(\mu)}}(\xi)=|\xi|^{|\alpha|} \widehat{\psi^{(\mu)}}(\xi)\widehat\psi(\xi)$, $W_k(x)=2^{kn}W(2^kx)$, and $V_j^{(\mu)}(x)=2^{jn}V^{(\mu)}(2^jx)$.  Here $c_{\alpha,\mu}$ are binomial coefficients, and $C_\mu=\int_{\R^n}\varphi(x)x^\mu dx$.  By Lemma \ref{l:conv}, it follows that
\begin{align*}
T_{V^{(\mu)}}f(x)=\sum_{j\in\Z}V_j^{(\mu)}*f(x)
\end{align*}
defines an operator that is bounded on $BMO$.  Then
\begin{align*}
\sum_{j\in\Z}2^{|\alpha|(k-j)}Q_kQ_j^{(\mu)} Q_j\beta(x)&=\sum_{j\in\Z}W_k*V_j^{(\mu)}*\beta(x)=W_k*(T_{V^{(\mu)}}\beta)(x),
\end{align*}
and we have the following
\begin{align*}
\int_Q\sum_{2^{-k}\leq\ell(Q)}2^{2|\alpha|k}|\<[[\Pi_\beta]]_{\alpha},\psi_k^x\>|^2&=\int_Q\sum_{2^{-k}\leq\ell(Q)}\left|\sum_{\mu\leq\alpha}c_{\alpha,\mu}C_{\alpha-\mu}\sum_{j\in\Z}2^{|\alpha|(k-j)}Q_kQ_j^{(\mu)} Q_j\beta(x)\right|^2\\
&\less\sum_{\mu\leq\alpha}|c_{\alpha,\mu}C_{\alpha-\mu}|^2\int_Q\sum_{2^{-k}\leq\ell(Q)}\left|W_k*(T_{V^{(\mu)}}\beta)(x)\right|^2.
\end{align*}
Note that $|\widehat W(\xi)|\less\min(|\xi|,|\xi|^{-1})$ as well, and since $T_{V^{(\mu)}}\beta\in BMO$ with $||T_{V^{(\mu)}}\beta||\less||\beta||_{BMO}$, it also follows that
\begin{align*}
\frac{1}{|Q|}\int_Q\sum_{2^{-k}\leq\ell(Q)}2^{2|\alpha|k}|\<[[\Pi_\beta]]_{\alpha},\psi_k^x\>|^2&\less\sum_{\mu\leq\alpha}|c_{\alpha,\mu}C_{\alpha-\mu}|^2\int_Q\sum_{2^{-k}\leq\ell(Q)}\left|W_k*(T_{V^{(\mu)}}\beta)(x)\right|^2\\
&\less||T_{V^{(\alpha)}}\beta||_{BMO}^2\less||\beta||_{BMO}^2.
\end{align*}
Therefore $[[\Pi_\beta]]_\alpha\in BMO_{|\alpha|}$ for $|\alpha|\leq L$, and by Theorem \ref{t:CZbound} it follows that $\Pi_\beta$ is bounded on $H^p$ for all $\frac{n}{n+L+\delta}<p\leq1$, where $L=\lfloor M/2\rfloor$ and $\delta=(M-2L+1)/2$.
\end{proof}

\section{Proof of Theorem \ref{t:CZbound}}

Finally, we return to the proof of Theorem \ref{t:CZbound}.  We have waited to this point to do so since we will need both Theorem \ref{t:reducedCZbound} and the Bony paraproduct construction in Theorem \ref{t:Bonyparaproduct}.

We need one other result from \cite{T,FTW,FHJW}; we state Theorem 3.13 from \cite{FTW} adapted to our notation and restricted to the Hardy space setting.

\begin{theorem}[\cite{FTW}]
Let $T\in CZO(M+\gamma)$ be bounded on $L^2$ and define $L=\lfloor M/2\rfloor$ and $\delta=(M-2L+\gamma)/2$.  If $T^*(x^\alpha)=0$ in $\mathcal D_M'$ for all $|\alpha|\leq L$ and $T1=0$ in $\mathcal D_0$, then $T$ is bounded on $H^p$ for all $\frac{n}{n+L+\delta}<p\leq1$.
\end{theorem}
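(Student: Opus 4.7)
My plan is to prove the theorem via the atomic characterization of $H^p$ combined with the molecular characterization of Hardy spaces, adapted to the smoothness $L+\delta$. The idea, going back to Taibleson--Weiss / Coifman--Weiss, is to show that $T$ sends every $H^p$-atom to an $H^p$-molecule with uniformly bounded molecular norm, and then sum over an atomic decomposition using the $p$-subadditivity of $\|\cdot\|_{H^p}^p$.

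First, any $f \in H^p \cap L^2$ with $p \in (n/(n+L+\delta), 1]$ admits an atomic decomposition $f = \sum_k \lambda_k a_k$ convergent in $H^p$, with each $a_k$ supported in a cube $Q_k$ satisfying $\|a_k\|_{L^2} \leq |Q_k|^{1/2 - 1/p}$ and $\int a_k(x) x^\alpha \,dx = 0$ for all $|\alpha| \leq L$, and $\sum_k |\lambda_k|^p \lesssim \|f\|_{H^p}^p$. The lower bound $p > n/(n+L+\delta)$ guarantees $L \geq n(1/p - 1)$, so this atomic decomposition is available.

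The core of the argument is showing that $Ta$ is an $H^p$-molecule uniformly in the atom $a$. For $a$ supported in a cube $Q$ of side $r$ centered at $x_Q$, I would establish the following three properties:
\begin{itemize}
\item An $L^2$-estimate $\|Ta\|_{L^2} \lesssim |Q|^{1/2 - 1/p}$, immediate from $L^2$-boundedness of $T$.
\item Vanishing moments $\int Ta(x)\, x^\alpha \,dx = \langle a,\, T^*(x^\alpha)\rangle = 0$ for $|\alpha| \leq L$. The pairing is made rigorous via the extension of $T^*$ to $\mathcal O_M$ from the introduction; here the hypothesis $T1=0$ is used to reconcile the truncation-and-correction definition of $T^*(x^\alpha)$ on atoms with the naive kernel pairing, and the hypothesis $T^*(x^\alpha)=0$ then forces the integral to vanish.
\item Pointwise decay $|Ta(x)| \lesssim |Q|^{(L+\delta)/n + 1 - 1/p} |x - x_Q|^{-n - L - \delta}$ for $x \notin 2Q$. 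This is obtained by subtracting from $K(x,y)$ its order-$L$ Taylor polynomial in $y$ around $x_Q$; the vanishing moments of $a$ annihilate the polynomial part, and the interpolated $\delta$-H\"older estimate on $D_1^L K$ (derived from the stated $\gamma$-regularity of $D_1^M K$, using $L+\delta = (M+\gamma)/2$) controls the remainder.
\end{itemize}

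With these three conditions in hand, the classical molecular characterization of $H^p$ (Taibleson--Weiss, Coifman--Weiss) gives $\|Ta\|_{H^p} \lesssim 1$ uniformly in $a$, and $p$-subadditivity yields
\[
\|Tf\|_{H^p}^p \leq \sum_k |\lambda_k|^p \|Ta_k\|_{H^p}^p \lesssim \sum_k |\lambda_k|^p \lesssim \|f\|_{H^p}^p.
\]
The trickiest step is the moment-vanishing one, because $x^\alpha \notin (H^p)^*$, so $Ta$ cannot simply be paired with $x^\alpha$ by duality. The proof hinges on the careful definition of $T^* \colon \mathcal O_M \to \mathcal D_M'$ given in the introduction, and both hypotheses $T1=0$ and $T^*(x^\alpha)=0$ play essential roles: without $T1=0$, the truncation-and-correction used to define $T^*(x^\alpha)$ would leave a polynomial remainder that is not annihilated by pairing against an atom of only order-$L$ vanishing moments, and without $T^*(x^\alpha)=0$ the remaining piece of the pairing would not vanish. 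Everything else in the argument (the $L^2$ bound, the kernel Taylor expansion, the passage from molecule to atom) follows the standard template once this cancellation is in place.
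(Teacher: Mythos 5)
The paper does not actually prove this statement: it is imported verbatim as Theorem 3.13 of \cite{FTW} (specialized to $q=2$, $\alpha=0$, $H^p=\dot F_p^{0,2}$), where the proof runs through the Frazier--Jawerth $\varphi$-transform, mapping smooth atoms to smooth molecules for general $\dot F_p^{\alpha,q}$ and invoking almost-diagonal estimates. Your atom-to-molecule argument based on the classical Taibleson--Weiss/Coifman--Weiss molecular characterization of $H^p$ is therefore a genuinely different and, for the Hardy-space case, more elementary route --- essentially that of Alvarez--Milman \cite{AM}, extended to kernels of smoothness $M+\gamma$. The three molecular estimates you isolate are the right ones and each goes through: the $L^2$ bound is immediate; the decay for $x\notin 2Q$ follows from the order-$L$ Taylor expansion of $K(x,\cdot)$ about $x_Q$ (for $M\ge1$ one may even use the order-$(L+1)$ remainder, since $L+1\le M$, and then dominate $|y-x_Q|^{L+1}|x-x_Q|^{-n-L-1}$ by $|y-x_Q|^{L+\delta}|x-x_Q|^{-n-L-\delta}$ for $y\in Q$, $x\notin 2Q$, so no interpolation is needed); and the moment condition reduces to $\langle T^*(x^\alpha),a\rangle=0$ once one justifies $\lim_{R}\int Ta(x)\eta_R(x)x^\alpha dx=\int Ta(x)x^\alpha dx$ by dominated convergence, the integral being absolutely convergent because $|\alpha|\le L<L+\delta$. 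The genuinely delicate point there is that $T^*(x^\alpha)$ must be understood as a functional on $\mathcal D_{|\alpha|}$ modulo polynomials, so that it can legitimately be tested against an atom possessing only $L\ge|\alpha|$ vanishing moments rather than $M$ of them; this is the intended reading (it is the one the paper itself uses when proving the necessity direction of Theorem \ref{t:CZbound}).

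Your account of where $T1=0$ enters, however, is wrong. The correction term in the truncated definition of $\langle T^*f,\psi\rangle$ is built from Taylor coefficients of the kernel of $T^*$ at the origin paired against moments of $\psi$; it is annihilated as soon as $\psi$ has vanishing moments up to order $|\alpha|$, which the atom has, and it has nothing to do with $T1$. In fact your argument as outlined never uses $T1=0$ at all --- which is consistent with, and would directly yield the sufficiency half of, Theorem \ref{t:CZbound}, whose entire point is that $T^*(x^\alpha)=0$ for $|\alpha|\le L$ alone suffices. The hypothesis $T1=0$ is present in \cite{FTW} because their $\dot F_p^{\alpha,q}$ machinery requires H\"older smoothness of the image molecules near the support of the atom, obtained by writing $Ta(x)=\int K(x,y)(a(y)-a(x))dy+a(x)\,T1(x)$; the Taibleson--Weiss molecular theorem for $H^p$, $p\le1$, requires no smoothness, which is precisely why your route avoids it. Using a given hypothesis vacuously is not a logical error, so the proof stands, but the mechanism you describe for your ``trickiest step'' is not correct as written, and the minor claim that $p>n/(n+L+\delta)$ gives $L\ge n(1/p-1)$ should read $L\ge\lfloor n(1/p-1)\rfloor$, which is what the atomic decomposition actually needs.
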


In the notation of \cite{FTW}, this theorem is stated with $q=2$, $0<p\leq1$, $J=n/p$, $L=\lfloor J-n\rfloor=\lfloor n/p-n\rfloor$, $\alpha=0$, and $H^p=\dot F_p^{0,2}$.

\begin{proof}[Proof of Theorem \ref{t:CZbound}]\label{t:FTW}
Let $T\in CZO(M+\gamma)$ be bounded on $L^2$ and define $L=\lfloor M/2\rfloor$ and $\delta=(M-2L+\gamma)/2$.  Assume that $T^*(x^\alpha)=0$ in $\mathcal D_M'$ for all $|\alpha|\leq L$.  Then $T1\in BMO$, and by Theorem \ref{t:Bonyparaproduct} there exists $\Pi\in CZO(M+1)$ such that $\Pi(1)=T(1)$, $\Pi^*(y^\alpha)=0$ for $|\alpha|\leq M$, and $\Pi$ is bounded on $H^p$ for all $\frac{n}{n+L+1}<p\leq1$.  Then $T=S+\Pi$, where $S=T-\Pi$.  Noting that $S^*(y^\alpha)=0$ for all $|\alpha|\leq L$ and $S1=0$, by Theorem \ref{t:FTW} it follows that $S$ is bounded on $H^p$ for all $\frac{n}{n+L+\delta}$.  Therefore $T$ is bounded on $H^p$ for all $\frac{n}{n+L+\delta}<p\leq1$.

Now assume that $T$ is bounded on $H^p$ for all $\frac{n}{n+L+\delta}<p\leq1$.  For $\psi\in\mathcal D_L$, it follows that $T\psi\in H^p\cap L^2$ for all $\frac{n}{n+L+\delta}<p\leq1$.  It is not hard to show that
\begin{align*}
\int_{\R^n}T\psi(x)x^\alpha dx
\end{align*}
is an absolutely convergent integral for any $|\alpha|<\sup\{n/p-n:\frac{n}{n+L+\delta}<p\leq1\}=L+\delta$.  By Theorem 7 in \cite{GH}, it follows that
\begin{align*}
\int_{\R^n}T\psi(x)x^\alpha dx=0
\end{align*}
for all $\alpha\in\N_0^n$ with $|\alpha|<L+\delta$.  Since $\delta>0$, this verifies that $T^*(y^\alpha)=0$ for all $|\alpha|\leq L$.
\end{proof}

\bibliographystyle{amsplain}

\begin{thebibliography}{10}

\bibitem{AM} J. Alvarez and M. Milman, {\it $H^p$ continuity properties of Calder\'on-Zygmund-type operators}, J. Math. Anal. Appl. {\bf118}, no. 1, (1986), 63-79.

%\bibitem[BCP62]{BCP} A. Benedek, A. P. Calder\'on, and R. Panzone, {\it Convolution operators on Banach space valued functions}, Proc. nat. Acad. Sci. U.S.A., {\bf 48}, (1962), 356-365.

%\bibitem[B03]{B} \'A. B\'enyi, {\it Bilinear singular integral operators, smooth atoms and molecules}, J. Fourier Anal. Appl. {\bf 9}, (2003), no. 3, 301-319.

\bibitem{BMN} \'A. B\'enyi, D. Maldonado, and V. Naibo, {\it What is . . . a paraproduct?}, Notices Amer. Math. Soc. {\bf57}, (2010), no. 7, 858-860.

\bibitem{B} J. M. Bony, {\it Calcul symbolique et propagation des singulararit\'es pour les \'equrations aux d\'eriv\'ees partielles non lin\'eaires}, Ann. Sci. \'Ecole Norm. Sup., {\bf14}, (1981), no. 2, 109-246.

\bibitem{BPT1} H.-Q. Bui, M. Paluszy\'nski, and M. Taibleson, {\it A note on the Besov-Lipschitz and Triebel-Lizorkin spaces}, Comtemp. Math. {\bf189}, Amer. Math. Soc., Providence, RI, 1995.

\bibitem{BPT2} H.-Q. Bui, M. Paluszy\'nski, and M. Taibleson, {\it Characterization of the Besov-Lipschitz and Triebel-Lizorkin spaces.  The case $q<1$}, J. Fourier Anal. Appl. {\bf3}, (1997), Special Issue, 837-846.

%\bibitem[CZ64]{CZ1} A. P. Calder\'on and A. Zygmund, {\it On higher gradients of harmonic functions}, Studia Math. {\bf 24}, (1964), 211-226.

\bibitem{C} L. Carleson, {\it An interpolation problem for bounded analytic functions}, Amer. J. Math. {\bf 80}, (1958), 921-930.

\bibitem{CJ} M. Christ and J. L. Journ\'e, {\it Polynomial growth estimates for multilinear singular integral operators}, Acta Math. 159, (1987), no. 1-2, 51-80.

%\bibitem[CHO14]{CHO} L. Chaffee, J. Hart, and L. Oliviera, {\it Weighted multilinear square function bounds}, Michigan Math. J. {\bf63}, (2014), no. 2, 371-400.

%\bibitem[CM78]{CM1} R. R. Coifman and Y. Meyer, {\it Au del\`a des op\'erateurs pseudo-diff\'erentiels}, Ast\'erisque 57, 1978.

%\bibitem[CM86]{CM2} R. R. Coifman and Y. Meyer, {\it A simple proof of a theorem by G. David and J.-L. Journ\'e on singular integral operators}, {\it Probability theory and harmonic analysis} (Cleveland, Ohio, 1983), 61Ð65, {\it Monogr. Textbooks Pure Appl. Math.}, {\bf98}, Dekker, New York, 1986.

%\bibitem[CG92]{CG} R. R. Coifman and L. Grafakos, {\it Hardy space estimates for multilinear operators I}, Rev. Mat. Iberoamericana {\bf 8} (1992), no. 1, 45-67.

%\bibitem[CLMS93]{CLMS} R. R. Coifman, P. L. Lions, Y. Meyer, and S. Semmes, , {\it Compensated compactness and Hardy spaces}, J. Math. Pures et Appl. {\bf 72}, (1993), 247-286.

%\bibitem[DL]{DL} W. Dai and G. Lu, {\it $L^p$ estimates for bilinear and multi-parameter Hilbert transforms}, preprint, arXiv:1403.0624.

\bibitem{DJ} G. David and  J. L. Journ\'e, {\it A boundedness criterion for generalized Calder\'on-Zygmund operators}, A.. of Math. (2), {\bf120}, (1984), no. 2, 371-397.

\bibitem{DJS} G. David, J. L. Journ\'e, and S. Semmes, {\it Op\'erateurs de Calder\'on-Zygmund, fonctions para-accr\'etives et interpolation}, Rev. Mat. Iberoamericana {\bf1}, (1985), no. 4, 1-56.
%
%\bibitem{D}\label{D} G. Diestel, {\it Some remarks on bilinear Littlewood-Paley-Stein theory}, J. Math. Anal. Appl., 307, 102-119 (2005).
%
%\bibitem[FS71]{FS1} C. Fefferman and E. Stein, {\it Some Maximal Inequalities}, Amer, J. Math., 93, 1, 107-115 (1971).

\bibitem{FS2} C. Fefferman and E. Stein, {\it $H^p$ spaces of several variables}, Acta Math., {\bf129}, (1972), no. 3-4, 137-193.

\bibitem{FHJW} M. Frazier, Y.S. Han, B. Jawerth, and G. Weiss, {\it The $T1$ theorem for Triebel-Lizorkin spaces}, Lecture Notes in Math., 1384, Springer, Berlin, 1989.

\bibitem{FJ} M. Frazier and B. Jawerth, {\it Decomposition of Besov spaces}, Indiana Univ. Math. J. {\bf34}, (1985), no. 4, 777-799.

\bibitem{FTW} M. Frazier, R. H. Torres, and G. Weiss, {\it The boundedness of Calder\'on-Zygmund operators on the spaces $\dot F_p^{\alpha,q}$}, Rev. Mat. Iveramericana {\bf 4}, (1988), no. 1, 41-72.


%\bibitem[G09]{G} L. Grafakos, {\it Modern Fourier Analysis}, Graduate Texts in Mathematics, 250, Springer, New York, 2009.

\bibitem{GH} L. Grafakos and D. He, {\it Weak Hardy Spaces}, to appear in a special volume in honor of Professor Shanzhen Lu.

\bibitem{GK} L. Grafakos and N. Kalton, {\it The Marcinkiewicz multiplier condition for bilinear operators}, Studia Math. {\bf 146}, (2001), no. 2, 115-156.

%\bibitem[GK01]{GK} L. Grafakos and N. Kalton, {\it Multilinear Calder\'on-Zygmund oprators on Hardy spaces}, Collect. Math. {\bf 52}, (2001), no. 2, 169-179

%\bibitem[GLMY14]{GLMY} L. Grafakos, L. Liu, D. Maldonado, and D. Yang, {\it Multilinear analysis on metric spaces}, Dissertationes Math. (Rpzprawy Mat.) {\bf 497}, (2014), 121.

%\bibitem[GO12]{GO} L. Grafakos, L. Oliveira, {\it Carleson measures associated with families of multilinear operators}, Studia Math. {\bf 211}, (2012), no. 1, 71-94.

%\bibitem[GT02]{GT} L. Grafakos, R. H. Torres, {\it Discrete decompositions for bilinear operators and almost diagonal conditions}, Trans. Amer. Math. Soc. {\bf 354}, (2002), no. 3, 1153-1176

%\bibitem[GHO13]{GHO} A. Grau de la Herr\'an, J. Hart, and L. Oliveira, {\it Multilinear local Tb theorem for square functions}, Ann. Acad. Sci. Fenn. Math. {\bf 38}, (2013), no. 2, 697-720.

\bibitem{HL} Y. Han and G. Lu, {\it Some recent works on multiparamter Hardy space theory and discrete Littlewood-Paley-Stein analysis}, Trends in partial differential equations, 99-191, Adv. Lect. Math. (ALM), {\bf 10}, Int. Press, Somerville, MA, 2010.

%\bibitem[H12]{H1} J. Hart, {\it Bilinear square functions and vector-valued Calder\'on-Zygmund operators}, J. Fourier Analysis Appl. 18, (2012), no.6, 1291-1313.

%\bibitem[H14]{H2}\label{Hart2} J. Hart, {\it A new proof of the bilinear T(1) theorem}, Proc. Amer. Math. Soc. {\bf142}, (2014), no. 9, 3169-3181.

%\bibitem[HM12]{HM} G. Hu and Y. Meng, {\it Multilienar Calder\'on-Zygmund operator on products of Hardy spaces}, Acta Math. Sin. (Engl. Ser.) {\bf28}, (2012), no. 2, 281-294.

\bibitem{J} P. Jones, {\it Square functions, Cauchy integrals, analytic capacity, and harmonic measure}, Lecture Notes in Math., 1384, Springer, Berlin, 1989.

\bibitem{LZ} G. Lu and Y. Zhu, {\it Bounds of singular integrals on weighted Hardy spaces and discrete Littlewood-Paley analysis}, J. Geom. Anal. {\bf22}, (2012), no. 3, 666-684.

%\bibitem[LT97]{LT} M. Lacey and C. Thiele, {\it $L^p$ estimates on the bilinear Hilbert transform for $2<p<\infty$}, Ann. of Math. {\bf 146}, (1997), 693-724.

%\bibitem[M05]{M}\label{M} D. Maldonado, {\it Multilinear Singular Integrals and Quadratic Estimates}, Doctoral Dissertation, University of Kansas, 2005.

%\bibitem[MN09]{MN} D. Maldonado and V. Naibo, {\it On the boundedness of bilinear operators on products of Besov and Lebesgue spaces}, J. Math. Anal. Appl. {\bf352}, (2009), no. 2, 591-603.

%\bibitem[MPTT04]{MPTT1} C. Muscalu, J. Pipher, T. Tao, and C. Thiele, {\it Bi-parameter paraproducts}, Acta Math. {\bf 193}, (2004), no. 2, 269-296.

%\bibitem[MPTT06]{MPTT2} C. Muscalu, J. Pipher, T. Tao, and C. Thiele, {\it Multi-parameter paraproducts}, Rev. mat. Iveroam. {\bf 22}, (2006), no. 3, 963-976.

\bibitem{Se} S. Semmes, {\it Square function estimates and the T(b) theorem}, Proc. Amer. Math. Soc., {\bf110}, no. 3, (1990), 721-726.

\bibitem{St1} E. Stein, {\it On the functions of Littlewood-Paley-Stein; Lusin, and Marcinkiewicz}, Trans. Amer. Math. Soc., {\bf88}, (1958), 430-466.

\bibitem{St2} E. Stein, {\it On the theory of harmonic functions of several variables. II. Behavior near the boundary}, Acta Math. {\bf106}, (1961), 137-174.

%\bibitem[St66]{St3} E. Stein, {\it Singular integrals, harmonic functions, and differentiability properties of functions of several variables}, Proc. Sympos. Pure Math., 316-335, 1966.

\bibitem{SW}\label{SW} E. Stein and G. Weiss, {\it On the theory of harmonic functions of several variables.  I. The theory of $H^p$-spaces}, Acta Math. {\bf 103}, (1960), 25-62.

\bibitem{T} R. H. Torres, {\it Boundedness results for operators with singular kernels on distribution spaces}, Mem. Amer. Math. Soc. {\bf 90}, (1991), 442.

\bibitem{Y}  A. Youssfi, {\it Bilinear operators and the Jacobian-determinant on Besov spaces}, Indiana Univ. Math. J. {\bf 45}, (1996), 381-396.

\end{thebibliography}

\end{document}